\newtheorem{thm}{Theorem}[section]
\newtheorem{cor}[thm]{Corollary}
\newtheorem{lem}[thm]{Lemma}
\newtheorem{prop}[thm]{Proposition}
\theoremstyle{definition}
\newtheorem{defn}[thm]{Definition}
\newtheorem{exa}[thm]{Example}
\newtheorem{rem}[thm]{Remark}
\numberwithin{equation}{section}
\newcommand{\N}{\mathbf N}
\newcommand{\Z}{\mathbf Z}
\newcommand{\R}{\mathbf R}
\newcommand{\C}{\mathbf C}
\newcommand{\B}{\mathcal B}
\newcommand{\Hi}{\mathcal H}
\newcommand{\Li}{\mathcal L}
\newcommand{\Ki}{\mathcal K}
\newcommand{\Ui}{\mathcal U}
\newcommand{\Vi}{\mathcal V}
\newcommand{\sca}{\hskip-.05cm\mid\hskip-.05cm}
\title
[Spectral multiplicity functions of adjacency operators of graphs]
{Spectral multiplicity functions
\\
of adjacency operators of graphs
\\
and cospectral infinite graphs}
\keywords{Spectral graph theory, adjacency operator, spectral measure, spectral multiplicity function,
unitarily equivalent operators, cospectral graphs, Jacobi matrix}
\subjclass[2000]{05C50, 47A10.}
\thanks{The author acknowledges support of the Swiss NSF grant 200020-20040}
\date{23 February 2024}
\author{Pierre de la Harpe}
\address{Pierre de la Harpe, Section de math\'ematiques,
Universit\'e de Gen\`eve, 
\newline
Uni Dufour,
24 rue du G\'en\'eral Dufour, Case postale 64, 1211 Gen\`eve 4, Suisse.}
\email{Pierre.delaHarpe@unige.ch}
\begin{document}

\begin{abstract}
The adjacency operator of a graph has a spectrum
and a class of scalar-valued spectral measures
which have been systematically analyzed;
it also has a spectral multiplicity function
which has been less studied.
The first purpose of this article is to review
some examples of infinite graphs
for which the spectral multiplicity function
of the adjacency operator has been determined.
The second purpose of this article is to show explicit examples
of infinite connected graphs
which are cospectral,
i.e., which have unitarily equivalent adjacency operators,
and explicit examples of infinite connected graphs
which are uniquely determined by their spectrum.
\end{abstract}

\maketitle

\section{Introduction}
\label{SectionIntro}

Let $G$ be a \textbf{graph} with vertex set $V$ and edge set $E$.
Here graphs are without loops and multiple edges (except in Section~\ref{SectionSurGrNP}),
and $E$ is a set of unordered pairs of vertices.
The degree $\deg (u)$ of a vertex $u \in V$ is the number of edges incident to $u$.
We assume below that $V$ is non-empty, countable (infinite or finite),
and that $G$ is of bounded degree, i.e., that $\max_{u \in V} \deg (u) < \infty$.
Let $\ell^2(V)$ denote the complex Hilbert space of functions $\xi \, \colon V \to \C$
such that $\sum_{u \in V} \vert \xi(u) \vert^2 < \infty$.
It has a canonical orthonormal basis $\left( \delta_u \right)_{u \in V}$;
the value of $\delta_u \in \ell^2(V)$ is~$1$ at~$u$ and~$0$ at other vertices.
The \textbf{adjacency operator} of $G$ is
the bounded self-adjoint linear operator
$A_G$ on $\ell^2(V)$ defined by
$$
(A_G \xi)(u) = \sum_{v \in V, \hskip.1cm \{u, v\} \in E} \xi(v)
\hskip.5cm \text{for all} \hskip.2cm
\xi \in \ell^2(V) 
\hskip.2cm \text{and} \hskip.2cm
u \in V .
$$
Adjacency operators appear in the theory of both finite graphs and infinite graphs.
From the vast literature, we quote
\cite{CoSi--57}, \cite{CvDS--80}, \cite{Bigg--93}, \cite{GoRo--01}, \cite{BrHa--12}
for finite graphs, and
\cite{Kest--59},
\cite{Moha--82},
\cite{GoMo--88}, \cite{MoWo--89},
\cite{BaGr--00},
\cite{DuGr--20}, \cite{GrNP--22}
for infinite graphs.
\par

As for any self-adjoint operator, the Hahn--Hellinger Multiplicity Theorem
implies that $A_G$ is characterized up to unitary equivalence
by three invariants (see Section~\ref{SectionHH}):
\begin{enumerate}
\item[---]
the \textbf{spectrum} $\Sigma (A_G)$, also called the spectrum of $G$,
which is a nonempty compact subset of $\R$,
\item[---]
a \textbf{scalar-valued spectral measure} $\mu_G$
which is a finite Borel measure on $\Sigma (A_G)$,
well-defined up to equivalence,
sometimes viewed as a measure on $\R$ with closed support $\Sigma (A_G)$,
\item[---]
the \textbf{spectral multiplicity function} $\mathfrak m_G$,
which is a measurable function from $\Sigma (A_G)$ to $\{1, 2, \hdots, \infty\}$,
well defined up to equality $\mu_G$-almost everywhere.
\end{enumerate}
We define the \textbf{marked spectrum} of $G$
to be the triple $(\Sigma(A_G), [\mu_G], \mathfrak m_G)$,
where $[\mu_G]$ denotes the class of a spectral valued measure $\mu_G$.
Two graphs are \textbf{cospectral} if they have the same marked spectrum.
A graph $G$ is \textbf{determined by its marked spectrum}
if any graph of bounded degree with the same marked spectrum
is isomorphic to $G$.
\par

Let $G = (V, E)$ be a graph which is finite,
or more generally a graph such that $\ell^2(V)$
has an orthonormal basis of eigenvectors of $A_G$,
for example the Cayley graph of a lamplighter group
as in \cite{GrZu--01} and \cite{BaWo--05}.
The scalar-valued spectral measures of $A_G$ are precisely
the measures which charge every eigenvalue of $A_G$,
so that the meaningful part of the marked spectrum of $G$
reduces to the pair $(\Sigma(A_G), \mathfrak m_G)$.
For such a graph, the spectral multiplicity functions
can be defined as in finite graph theory:
$\mathfrak m_G(x) = \dim \ker(x {\rm Id} - A_G)$ for all $x \in \Sigma (A_G)$.
For more general graphs, see Definition~\ref{DefMultF}.
\par

For finite graphs, spectra and multiplicities of eigenvalues
have been studied intensively.
For infinite graphs,
spectra of adjacency operators have attracted a lot of attention,
but in contrast spectral measures a bit less,
and spectral multiplicity functions even less
(even if there are precise computations of multiplicities
for some classes of graphs,
for example for sparse trees \cite{Breu--07}).
\par

The first purpose of this article is to review
a small number of examples of infinite connected graphs $G = (V, E)$
for which the spectral multiplicity function
of~$A_G$ has been determined.
All this is well-known to experts,
but we did not find good references in the literature.
In Section~\ref{SectionHH},
we review various kinds of multiplication operators,
the Hahn--Hellinger Multiplicity Theorem,
and the definition of the spectral multiplicity function
for a bounded self-adjoint operator.
In Propositions~\ref{infray}, \ref{infline}, and \ref{lattices}, we show:

\begin{prop}
The adjacency operator of the infinite ray $R$ has
spectrum $\mathopen[ -2, 2 \mathclose]$,
scalar-valued spectral measure equivalent to Lebesgue measure,
and uniform multiplicity one.
\par
The adjacency operator of the infinite line $L$
has spectrum $\mathopen[ -2, 2 \mathclose]$,
scalar-valued spectral measure equivalent to Lebesgue measure,
and uniform multiplicity two.
\par
For $d \ge 2$, the adjacency operator of the lattice $L_d$ 
has spectrum $\mathopen[ -2d, 2d \mathclose]$,
scalar-valued spectral measure equivalent to Lebesgue measure,
and infinite uniform multiplicity.
\end{prop}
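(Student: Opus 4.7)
The plan is to identify, in each case, the adjacency operator with an explicit multiplication operator $M_\varphi$ on an appropriate $L^2$ space, and then read off the three invariants via the Hahn--Hellinger theorem recalled in Section~\ref{SectionHH}: the spectrum equals the essential range of $\varphi$, the scalar spectral measure class is that of the pushforward $\varphi_*\mu$, and the multiplicity function at $x$ equals the $L^2$-dimension of the fibre $\varphi^{-1}(x)$ with respect to the conditional measure.

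For the infinite ray $R$ with $V = \N$, the operator $A_R$ is a tridiagonal Jacobi matrix with zero diagonal and unit off-diagonals. An induction on the identity $A_R^n \delta_0 = \delta_n + \sum_{k<n} c_{k,n}\delta_k$ shows that the cyclic subspace of $\delta_0$ contains every $\delta_n$, so $\delta_0$ is cyclic and $A_R$ therefore has uniform multiplicity one. The scalar spectral measure is $\mu_R(\cdot) = \langle E_{A_R}(\cdot)\delta_0, \delta_0\rangle$, whose moments $\int x^n \, d\mu_R = \langle A_R^n\delta_0, \delta_0\rangle$ count closed walks on $R$ of length $n$ based at the root; these equal the Catalan number $C_{n/2}$ for even $n$ and vanish otherwise. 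This identifies $\mu_R$ with the semicircle density $\tfrac{1}{2\pi}\sqrt{4-x^2}\,dx$ on $[-2,2]$, a measure equivalent to Lebesgue.

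For the infinite line $L$, the Fourier transform $\ell^2(\Z) \to L^2(\mathbf T)$ conjugates $A_L$ to multiplication by $\varphi(\theta) = 2\cos\theta$. Decomposing $L^2(\mathbf T)$ into even and odd functions under $\theta \mapsto -\theta$ yields two $M_\varphi$-invariant subspaces, and on each the change of variables $x = 2\cos\theta$ identifies $M_\varphi$ with multiplication by $x$ on $L^2\bigl([-2,2], \tfrac{dx}{\pi\sqrt{4-x^2}}\bigr)$, giving uniform multiplicity two and the Lebesgue measure class. For the lattice $L_d$ with $d \ge 2$, the same Fourier construction identifies $A_{L_d}$ with $M_{\varphi_d}$ on $L^2(\mathbf T^d)$, where $\varphi_d(\theta_1, \ldots, \theta_d) = 2\sum_{i=1}^d \cos\theta_i$; the spectrum is $[-2d, 2d]$, and the spectral measure is the pushforward of Haar measure, which is absolutely continuous with density the $d$-fold convolution of the one-dimensional density, hence equivalent to Lebesgue on $[-2d, 2d]$. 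The critical set of $\varphi_d$ is finite, so for every $x$ in the interior of the spectrum that is not a critical value, the level set $\varphi_d^{-1}(x)$ is a smooth compact hypersurface of dimension $d-1 \ge 1$; the coarea formula then realises $L^2(\mathbf T^d)$ as a direct integral whose fibres $L^2(\varphi_d^{-1}(x), \nu_x)$ are infinite-dimensional for almost every $x$, giving infinite uniform multiplicity.

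The main obstacle will be making this last step rigorous: one must articulate precisely the direct integral decomposition of $M_{\varphi_d}$ via the coarea formula and verify that the conditional measures $\nu_x$ are supported on positive-dimensional manifolds, so that $\dim L^2(\varphi_d^{-1}(x), \nu_x) = \infty$ for almost every $x$. The other assertions reduce either to direct moment or Fourier computations, or to easily exhibited invariant decompositions.
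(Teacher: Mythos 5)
Your proposal is correct in outline, and for the line and the lattices it follows essentially the same route as the paper: conjugate by the Fourier transform to the operator of multiplication by $2\cos$ (resp.\ by $2\sum_{j}\cos$) as in Lemma~\ref{Leminfline} and Lemma~\ref{Lemlattices}, with your even/odd splitting of $L^2$ of the circle playing exactly the role of the paper's splitting of $\mathopen[0,2\pi\mathclose]$ into $\mathopen[0,\pi\mathclose]$ and $\mathopen[\pi,2\pi\mathclose]$ via Example~\ref{ExCosEqt}. Two points differ genuinely. For the ray, you establish cyclicity of $\delta_0$ by triangularity of $A_R^n\delta_0$ and then identify the local spectral measure by matching Catalan-number walk counts against the moments of the semicircle density $\frac{1}{2\pi}\sqrt{4-x^2}\,dx$; the paper instead exhibits $A_R$ directly as multiplication by $x$ in the orthonormal basis of rescaled Chebyshev polynomials of the second kind on $L^2(\mathopen[-2,2\mathclose],\mu)$ (proof of Proposition~\ref{infray}). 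Both are complete arguments; yours is more combinatorial, the paper's produces the unitary equivalence explicitly and gets all four claims at once from Proposition~\ref{PropMO}. For the infinite multiplicity of $A_{L_d}$ you propose a coarea-formula disintegration over the level sets of $2\sum\cos$ and you correctly flag this as the step requiring care; the paper short-circuits precisely this point by invoking Proposition~\ref{PropMOmult} (Abrahamse's theorem), which states that the multiplicity of a multiplication operator by a \emph{continuous} function equals the cardinality of the essential preimage $\varphi^{-1}_\mu(x)$ for $\mu$-almost every $x$, and this cardinality is infinite here because the level sets are positive-dimensional. If you wish to avoid setting up the direct-integral and coarea machinery rigorously, citing that result is the cleanest way to close your acknowledged gap; note in doing so that the correct notion is the essential preimage rather than the naive fibre $\varphi^{-1}(x)$.
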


Section~\ref{SectionSSRTree}
is a study of spherically symmetric rooted trees.
For the particular case of regular trees,
we need in Section~\ref{SectionRegTrees}
to recall results on operators defined by infinite Jacobi matrices.
In Propositions~\ref{propRootedTInfmult} and~\ref{propTInfmult},
we show:

\begin{prop}
For $d \ge 2$, the adjacency operator of the
infinite regular rooted tree $T_d^{{\rm root}}$ of branching degree $d$
has spectrum $\mathopen[ -2 \sqrt d, 2 \sqrt 2 \mathclose]$,
scalar-valued spectral measure equivalent to Lebesgue measure,
and infinite uniform multiplicity.
\par
For $d \ge 3$, the adjacency operator of the
regular rooted tree $T_d$ of degree $d$
has spectrum $\mathopen[ -2 \sqrt{d-1}, 2 \sqrt{d-1} \mathclose]$,
scalar-valued spectral measure equivalent to Lebesgue measure,
and infinite uniform multiplicity.
\end{prop}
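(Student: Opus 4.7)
\emph{Plan.} The approach is to decompose $\ell^2(V)$ as an orthogonal direct sum of $A_G$-invariant subspaces by exploiting the spherical symmetry of the tree about a chosen root, and then to identify $A_G$ on each summand with an explicit tridiagonal Jacobi matrix whose spectral type follows from the frameworks of Sections~\ref{SectionSSRTree} and~\ref{SectionRegTrees}.

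\emph{The invariant decomposition.} Fix a root $o$ (the given root for $T_d^{\rm root}$, any vertex for $T_d$). Let $S_n$ denote the sphere of radius $n$ about $o$, and let $W_0$ be the closed linear span of the normalized sphere indicators $\psi_n = |S_n|^{-1/2} \mathbf 1_{S_n}$ for $n \ge 0$. For each vertex $u \in V$ with children $v_1, \ldots, v_{m_u}$ (so $m_u = d$ for every $u$ in $T_d^{\rm root}$; and $m_u = d$ for the root, $m_u = d-1$ otherwise, in $T_d$), and for each $c = (c_i)_{i=1}^{m_u} \in \C^{m_u}$ with $\sum_i c_i = 0$, let $Z(u, c)$ be the closed span of the functions $\sum_i c_i a_k \mathbf 1_{S_k(v_i)}$, where $S_k(v_i)$ is the sphere of radius $k$ about $v_i$ inside the descendant subtree $T_{v_i}$, and $(a_k)_{k \ge 0}$ ranges over complex sequences. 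Invoking the spherically symmetric rooted tree machinery of Section~\ref{SectionSSRTree}, one verifies that $\ell^2(V) = W_0 \oplus \bigoplus_{u, c} Z(u, c)$ is an orthogonal decomposition into $A_G$-invariant subspaces, the inner sum being over $u \in V$ and $c$ ranging over an orthonormal basis of the $(m_u - 1)$-dimensional solenoidal space $\{c : \sum_i c_i = 0\}$.

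\emph{Jacobi matrices and conclusion.} A short neighbour-count (using that each sphere in $T_d^{\rm root}$ is $d$ times the size of the previous one, and each sphere of radius $\ge 2$ in $T_d$ is $d-1$ times the size of the previous one) shows that on both $W_0$ and each $Z(u, c)$ the operator $A_G$ becomes, in the natural spherical orthonormal basis, a tridiagonal Jacobi matrix with zero diagonal. For $T_d^{\rm root}$, the off-diagonal entries are uniformly $\sqrt d$ in every summand, so each summand carries an operator unitarily equivalent to $\sqrt d$ times the adjacency operator of the ray (Proposition~\ref{infray}); hence each contributes spectrum $[-2\sqrt d, 2\sqrt d]$, spectral measure equivalent to Lebesgue, and multiplicity one. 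For $T_d$, the descendant subtree $T_{v_i}$ appearing in $Z(u, c)$ is isomorphic to $T_{d-1}^{\rm root}$, giving constant off-diagonal $\sqrt{d-1}$; on $W_0$ the first off-diagonal entry is $\sqrt d$ while the rest are $\sqrt{d-1}$, a finite-rank (in fact rank-one) perturbation of the previous matrix. The Jacobi matrix results recalled in Section~\ref{SectionRegTrees} show that this perturbation does not alter the absolutely continuous spectrum $[-2\sqrt{d-1}, 2\sqrt{d-1}]$ nor its multiplicity one. Since the index set $\{(u, c)\}$ is countably infinite, the spectral multiplicity function equals $\infty$ uniformly on the claimed interval, while the scalar-valued spectral measure remains equivalent to Lebesgue.

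\emph{Main obstacle.} The heart of the argument is establishing the orthogonal direct sum decomposition $\ell^2(V) = W_0 \oplus \bigoplus_{u, c} Z(u, c)$ with each summand $A_G$-invariant; this is exactly what the spherically symmetric rooted tree analysis of Section~\ref{SectionSSRTree} is designed to deliver. Once it is in hand the remaining steps---the neighbour count identifying each restricted operator with (a scaling of, plus a finite-rank correction to) the adjacency operator of the ray, and the invocation of Kato--Rosenblum type stability of absolutely continuous spectrum under trace-class perturbations---are routine.
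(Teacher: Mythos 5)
Your proposal follows essentially the same route as the paper: decompose $\ell^2(V)$ into $A_G$-invariant subspaces using spherical symmetry about a root, identify the restriction to each summand with a weighted Jacobi matrix, and sum up countably many unitarily equivalent blocks. Your decomposition into $W_0$ and the spaces $Z(u,c)$ indexed by vertices and solenoidal vectors is a refinement of the paper's $\bigoplus_n \Vi_n$ (the paper keeps each orthogonal complement $\Ui_{n,n}$ whole and observes that the resulting shift is a multiple of a scalar one), but the resulting Jacobi matrices are identical, and the treatment of $T_d^{\rm root}$ as infinitely many copies of $\sqrt d\,J$ is exactly the paper's Proposition~\ref{propRootedTInfmult}.

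The one place where your argument is weaker than what is needed is the block $W_0$ for $T_d$. You dispose of the matrix $J_{\sqrt d,\sqrt{d-1}^\infty}$ by calling it a finite-rank perturbation of $\sqrt{d-1}\,J_1$ and invoking ``Kato--Rosenblum type stability of absolutely continuous spectrum under trace-class perturbations.'' Kato--Rosenblum only gives unitary equivalence of the \emph{absolutely continuous parts}; it does not exclude the appearance of new eigenvalues outside $[-2\sqrt{d-1},2\sqrt{d-1}]$, nor singular spectrum. This is not a vacuous worry: writing $J_{\sqrt d,\sqrt{d-1}^\infty}=\sqrt{d-1}\,J_a$ with $a=\sqrt{d}/\sqrt{d-1}$, the operator $J_a$ really does acquire two eigenvalues $\pm a^2/\sqrt{a^2-1}$ outside $[-2,2]$ as soon as $a>\sqrt2$, so if such eigenvalues occurred here the spectrum and the spectral measure class would both change and the cospectrality conclusions would fail. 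The paper closes this gap by solving the eigenvalue difference equation directly to show $J_a$ has no eigenvalues when $0<a\le\sqrt2$ (which holds since $d\ge3$ gives $a\le\sqrt{3/2}$), and by quoting M\'at\'e--Nevai to get that the local spectral measure at $\delta_0$ is equivalent to Lebesgue measure on $[-2,2]$ (Proposition~\ref{PropQuoteJa}); only then does $J_a\cong J_1$ follow from the Hahn--Hellinger theorem. Since you also point to ``the Jacobi matrix results recalled in Section~\ref{SectionRegTrees},'' which contain precisely these facts, your proof is correct once the appeal to Kato--Rosenblum is replaced by that more precise input.
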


Examples of cospectral finite graphs date back to the
very first papers in spectral graph theory.
They include a pair of graphs with $5$ vertices,
a pair of connected graphs with $6$ vertices,
a pair of trees with $8$ vertices (already in \cite{CoSi--57}),
and pairs of regular connected graphs with $10$ vertices;
for these and much more, see \cite{BrHa--12} and \cite{HaSp--04}.
It is striking that examples of cospectral pairs
appear that early in spectral graph theory.
In contrast, the study of
the spectrum of the Laplacian of geometric objects
like bounded open domains in Euclidean spaces
goes back to \cite{Weyl--11},
and the question of existence of cospectral plane domains
(rather called isospectral plane domains)
was open for a long time,
indeed from before \cite{Kac--66},
until the discovery of explicit examples
of cospectral plane domains \cite{GoWW--92}.
\par

The second purpose of this article is to show explicit examples
of cospectral infinite connected graphs.
To our knowledge, such examples do not appear explicitly in the literature.
As an immediate consequence of the two previous propositions,
we have Corollaries~\ref{ExampleCospectrauxRT} and~\ref{ExampleCospectrauxT}:
 
\begin{cor}
For any integer $d \ge 2$, 
the graphs $L_d$, $T_{d^2}^{{\rm root}}$ and $T_{d^2+1}$ are cospectral.
\end{cor}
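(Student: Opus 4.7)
The plan is to simply invoke the two preceding propositions and verify that the three graphs have the same marked spectrum. By definition, cospectrality is an equality of three invariants: the spectrum, the equivalence class of the scalar-valued spectral measure, and the spectral multiplicity function (up to almost everywhere equality). So the proof reduces to a bookkeeping check that each of these invariants coincides for the three operators $A_{L_d}$, $A_{T_{d^2}^{{\rm root}}}$, and $A_{T_{d^2+1}}$.

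First I would compute the spectra. The proposition for lattices gives $\Sigma(A_{L_d}) = \mathopen[ -2d, 2d \mathclose]$. Applying the proposition on regular rooted trees with branching degree $d^2$ gives $\Sigma(A_{T_{d^2}^{{\rm root}}}) = \mathopen[ -2\sqrt{d^2}, 2\sqrt{d^2} \mathclose] = \mathopen[ -2d, 2d \mathclose]$. Applying the proposition on regular trees of degree $d^2+1$ gives $\Sigma(A_{T_{d^2+1}}) = \mathopen[ -2\sqrt{(d^2+1)-1}, 2\sqrt{(d^2+1)-1} \mathclose] = \mathopen[ -2d, 2d \mathclose]$. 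So the three spectra coincide.

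Next I would read off the other two invariants from the same propositions: for each of the three graphs the scalar-valued spectral measure is equivalent to Lebesgue measure on $\mathopen[ -2d, 2d \mathclose]$, so the classes $[\mu_{L_d}]$, $[\mu_{T_{d^2}^{{\rm root}}}]$ and $[\mu_{T_{d^2+1}}]$ all agree. Finally, each of the three propositions asserts infinite uniform multiplicity on the spectrum, i.e.\ the spectral multiplicity function is the constant function $\infty$ on $\mathopen[ -2d, 2d \mathclose]$, so the three multiplicity functions agree $\mu$-almost everywhere (in fact everywhere).

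Assembling these three equalities shows that the marked spectra $(\Sigma(A_G), [\mu_G], \mathfrak m_G)$ coincide for the three graphs, which is the definition of cospectrality. The only subtlety worth flagging in the write-up is the input proposition on $T_{d}^{{\rm root}}$: one must check that ``branching degree $d^2$'' is the parameter that makes the spectrum $\mathopen[ -2d, 2d \mathclose]$ match, and likewise that the degree $d^2+1$ in the unrooted case produces $2\sqrt{(d^2+1)-1} = 2d$. Once these two arithmetic matchings are pointed out, no further work is required, and the corollary is immediate.
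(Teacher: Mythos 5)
Your proposal is correct and follows essentially the same route as the paper: the paper's Corollaries~\ref{ExampleCospectrauxRT} and~\ref{ExampleCospectrauxT} likewise just read off the three invariants from Propositions~\ref{lattices}, \ref{propRootedTInfmult} and~\ref{propTInfmult}, note that all three scalar-valued spectral measures are equivalent to Lebesgue measure on $\mathopen[-2d,2d\mathclose]$ even though they are not equal, and conclude via the definition of the marked spectrum (equivalently Corollary~\ref{CorRefUnitEq}). The arithmetic matchings $2\sqrt{d^2}=2d$ and $2\sqrt{(d^2+1)-1}=2d$ that you flag are exactly the point of the choice of parameters.
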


Note that $L_d$ and $T_{d^2+1}$ are Cayley graphs.
Further examples of multiplets of cospectral spherically symmetric rooted trees
are shown in Example~\ref{ExSuitedperio}.
The final Section~\ref{SectionSurGrNP}
is a very short account
of an uncountable family of cospectral Schreier graphs, from \cite{GrNP--22}. 
\par

Our third purpose is to show examples of graphs determined by their spectra.
There are well-known finite graphs determined by their spectra:
finite paths, cycles, complete graphs $K_n$, complete bipartite graphs $K_{n,n}$, 
triangular graphs $T(n)$ with $n \ne 8$; to cite but a few.
For some experts
``it seems more likely that almost all graphs are determined by their spectrum,
than that almost all graphs are not'';
see \cite[Chapter~14, and in particular Section~14.4]{BrHa--12}.
Some finite graphs are determined by their spectra \emph{among connected graphs},
but not among all finite graphs; this is the case for
finite graphs $G$ with $\Vert A_G \Vert \le 2$ \cite{CvGu--75}.
For infinite graphs, we have
Propositions~\ref{propnormAis2} and~\ref{propRischarac}:

\begin{prop}
\label{PropIntroDetermined}
The infinite graph $R$ is determined by its marked spectrum.
\par

Each of the following three graphs is determined by its marked spectrum
among connected graphs of bounded degree:
$R$, the graph $D_\infty$ of Proposition~\ref{PropSpecD},
and the infinite line $L$.
\end{prop}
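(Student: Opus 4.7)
The plan is to combine a classification of connected bounded-degree graphs whose adjacency operator has norm at most~$2$ (essentially due to J.H.~Smith) with the distinguishing power of the three invariants of the marked spectrum. I would organize the argument in two passes: first identifying the graph up to connected components using the classification, then ruling out disconnected possibilities for the strengthened statement about $R$.

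For the statement about graphs determined among connected bounded-degree graphs, fix a connected graph $G$ of bounded degree whose marked spectrum equals that of one of $R$, $L$, or $D_\infty$. The common spectrum $[-2,2]$ forces $\|A_G\|=2$. I would invoke Smith's classification: a connected graph with $\|A_G\|\le 2$ is either a finite simply-laced Dynkin diagram $A_n, D_n, E_6, E_7, E_8$, a finite extended Dynkin diagram $\tilde A_n, \tilde D_n, \tilde E_6, \tilde E_7, \tilde E_8$, or one of the three infinite graphs $R$, $L$, $D_\infty$. Since $\Sigma(A_G)$ is infinite, $G$ must be infinite, and hence lies in $\{R,L,D_\infty\}$. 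These three graphs are pairwise distinguished by their marked spectra: the multiplicity function is uniformly $1$ for $R$ and uniformly $2$ for $L$ by Propositions~\ref{infray} and~\ref{infline}, while the scalar spectral measure of $D_\infty$ (Proposition~\ref{PropSpecD}) has an atom at $0$ coming from the antisymmetric eigenvector $\delta_a-\delta_b$ on its two pendant leaves, placing its measure class outside that of Lebesgue.

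For the stronger first assertion (without connectedness), I would decompose $G=\bigsqcup_i G_i$ into connected components, so that $A_G=\bigoplus_i A_{G_i}$; each $G_i$ still satisfies $\|A_{G_i}\|\le 2$ and appears in Smith's list. Any finite component, and any $D_\infty$ component, would contribute atoms to $\mu_G$; since $\mu_R$ is equivalent to Lebesgue measure (which has no atoms), every $G_i$ must lie in $\{R, L\}$. Each such component has scalar measure with full Lebesgue support on $[-2,2]$, so the presence of two or more components would force $\mathfrak m_G\ge 2$ almost everywhere on $[-2,2]$, contradicting the uniform multiplicity one of $\mathfrak m_R$. Consequently $G$ has a single component, and by the preceding paragraph $G\cong R$.

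The main obstacle is Smith's classification in the bounded-degree infinite setting. I would prove the needed statement by exhaustion: a connected bounded-degree graph not in the proposed list must contain a finite subgraph of spectral radius strictly greater than~$2$ (for instance a vertex of degree $\ge 4$ together with sufficiently long incident paths, or two vertices of degree $\ge 3$ suitably separated); monotonicity of the spectral radius under inclusion of subgraphs then contradicts $\|A_G\|\le 2$. The remaining measure-theoretic steps after the classification are routine consequences of the Hahn--Hellinger decomposition recalled in Section~\ref{SectionHH}.
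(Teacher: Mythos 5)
Your proposal is correct and follows essentially the same route as the paper: the paper's Proposition~\ref{propnormAis2} proves exactly the Smith-type classification you invoke (via the forbidden subgraphs $\widetilde A_n$, $\widetilde D_n$, $\widetilde E_n$ and monotonicity of the norm under subgraph inclusion), distinguishes $R$, $D_\infty$, $L$ by eigenvalue/multiplicity data just as you do, and its Proposition~\ref{propRischarac} carries out the same component-by-component elimination for the unrestricted statement about $R$. No substantive differences to report.
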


I am grateful to Jonathan Breuer, Chris Godsil, Rostislav Grigorchuk,
and Tatiana Nagnibeda,
for useful indications and remarks during the writing of this paper.

\section{\textbf{Spectral measures
and the Hahn--Hellinger Multiplicity Theorem}}
\label{SectionHH}

This section is a reminder on various notions of spectral measures
and on the theorem of the title,
which is due to E.\ Hellinger in 1907 and H.\ Hahn in 1912;
references to the original papers can be found in \cite[Section X.6, Page 928]{DuSc--63}.
All Hilbert spaces which appear here are complex,
and separable whenever needed.
The scalar product of two vectors $\xi, \eta$ in a Hilbert space $\Hi$
is denoted by $\langle \xi \sca \eta \rangle$;
it is linear in $\xi$ and antilinear in $\eta$.
We use the following notation: $\N = \{0, 1, 2, \hdots, \}$
and $\overline{\N^*} = \{1, 2, \hdots, \infty \}$.

\subsection{Spectrum, spectral measures, and dominant vectors}
\label{SubsSp+Mu}

Let $\Hi$ be a Hilbert space,
$\Li (\Hi)$ the algebra of bounded linear operators on $\Hi$,
and $X \in \Li (\Hi)$.
The \textbf{spectrum} of $X$ is the set $\Sigma (X)$
of $\lambda \in \C$ such that
$\lambda {\rm Id} - X$ is not invertible in $\Li (\Hi)$.
It is a compact subset of $\C$, and a non-empty one unless $\Hi = \{0\}$.
Assume from now on that $X$ is \textbf{self-adjoint},
so that $\Sigma (X)$ is a compact subset of $\R$.
Denote by $\B_{\Sigma (X)}$
the $\sigma$-algebra of Borel subsets of $\Sigma (X)$.
By the spectral theorem, there exists a \textbf{projection-valued spectral measure}
$E_X \, \colon \B_{\Sigma (X)} \to {\rm Proj}(\Hi)$
such that $X = \int_{\Sigma (X)} x dE_X(x)$.
A vector $\xi \in \Hi$ determines
a \textbf{local spectral measure at $\xi$} on $\Sigma (X)$,
denoted by $\mu_\xi$, defined by
$\mu_\xi (B) = \langle E_X(B) \xi \sca \xi \rangle$
for all $B \in \B_{\Sigma (X)}$;
then $\langle X\xi \sca \xi \rangle = \int_{\Sigma(X)} x d\mu_\xi(x)$.
A vector $\xi$ is \textbf{dominant} for $X$
if $\mu_\eta$ is absolutely continuous with respect to $\mu_\xi$
for all $\eta \in \Hi$.
(``Dominant vector'' is the terminology of \cite[Page 306]{Sim4--15};
the terminology of \cite[Page 446]{BoSm--20} is
``vector of maximal type'', and that of \cite{Dixm--69}
is ``separating vector'' for the W$^*$-algebra generated by $X$.)
A \textbf{scalar-valued spectral measure for $X$}
is a measure on $\Sigma (X)$ of the form $\mu_\xi$, for $\xi$ dominant.
Two scalar-valued spectral measures for $X$ are equivalent,
i.e., are absolutely continuous with respect to each other.
A vector $\xi \in \Hi$ is \textbf{cyclic} for $X$
if the closed linear span of $\{X^n \xi\}_{n \in \N}$
is the whole of $\Hi$.
\par

We denote by $\mathcal B (\Sigma (X))$
the algebra of bounded Borel-measurable functions on $\Sigma (X)$.
For $f$ in this algebra, the operator $f(X)$
is defined by Borel functional calculus.

\begin{prop}[\textbf{existence and characterizations of dominant vectors
for self-adjoint operators}]
\label{PropDominant}
Let $X$ be a bounded self-adjoint operator on a separable Hilbert space $\Hi$.
Let $\mathcal B (\Sigma (X))$ and $E_X$ be as above.
\begin{enumerate}[label=(\arabic*)]
\item\label{1DEPropDominant}
There exist dominant vectors for $X$.
More precisely, for any $\eta \in \Hi$, there exists a dominant vector $\xi$ for $X$
such that $\eta$ is in the closed linear span of $\{ X^n \xi \}_{n \in \N}$.
\item\label{2DEPropDominant}
A vector $\xi \in \Hi$ is dominant for $X$ if and only if,
for any $f \in \mathcal B (\Sigma (X))$,
the equality $f(X) \xi = 0$ implies $f(X) = 0$.
\item\label{3DEPropDominant}
A vector $\xi \in \Hi$ is dominant for $X$ if and only if,
for any Borel subset $B$ of $\Sigma (X)$, the equality
$\mu_\xi(B) = 0$ is equivalent to the equality $E_X(B) = 0$.
\item\label{4DEPropDominant}
Cyclic vectors for $X$ are dominant vectors for $X$.
\item\label{5DEPropDominant}
If $X$ has at least one cyclic vector, dominant vectors for $X$
are cyclic vectors for $X$.
\end{enumerate}
Let $(\varepsilon_j)_{j \ge 1}$ be
an orthonormal basis of $\Hi$.
For $j \ge 1$, let $\mu_j$ denote the local spectral measure at $\varepsilon_j$.
\begin{enumerate}[label=(\arabic*)]
\addtocounter{enumi}{5}
\item\label{6DEPropDominant}
If $\xi \in \Hi$ is such that the local spectral measure $\mu_\xi$
dominates $\mu_j$ for all $j \ge 1$,
then $\xi$ is a dominant vector.
\end{enumerate}
\end{prop}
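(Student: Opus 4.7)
The plan is to prove (3) first---all of (2), (4), (6) follow from it via the identity $\|E_X(B)\eta\|^2 = \mu_\eta(B)$---then (5) via the commutant structure, leaving (1) for the end. For (3), the dominance condition $\mu_\eta \ll \mu_\xi$ for every $\eta$ translates, through $\mu_\eta(B) = \|E_X(B)\eta\|^2$, into the implication $\mu_\xi(B) = 0 \Rightarrow E_X(B) = 0$; the reverse implication is immediate. Part (2) then matches both sides via Borel functional calculus applied to $\{f \ne 0\}$. For (4), if $\xi$ is cyclic and $\mu_\xi(B) = 0$ then $E_X(B)\xi = 0$; since $E_X(B)$ commutes with every $X^n$, it annihilates the dense set $\{X^n \xi\}_{n \in \N}$, forcing $E_X(B) = 0$. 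For (6), $\mu_\xi(B) = 0 \Rightarrow \mu_j(B) = 0 \Rightarrow E_X(B)\varepsilon_j = 0$ for every $j$, so $E_X(B)$ vanishes on a basis.

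For (5), I would use the well-known equivalence that $X$ admits a cyclic vector if and only if $\{X\}'$ is abelian, i.e.\ $\{X\}' = \{X\}''$. The projection $P$ onto the closed cyclic subspace $\overline{\mathrm{span}}\{X^n \xi\}$ always belongs to $\{X\}'$, so under this hypothesis it lies in $\{X\}''$; the latter being generated by the $E_X(B)$, every projection in it is of the form $E_X(C)$ for some Borel $C \subseteq \Sigma(X)$. Then $E_X(C)\xi = \xi$ gives $\mu_\xi(\Sigma(X) \setminus C) = 0$, and dominance of $\xi$ together with (3) forces $E_X(\Sigma(X) \setminus C) = 0$, so $P = \mathrm{Id}$ and $\xi$ is cyclic.

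The main obstacle is (1), and in particular the refinement that $\eta$ be placed inside the cyclic subspace of $\xi$. My plan is to invoke the Hahn--Hellinger decomposition to represent $(X, \Hi)$ as multiplication by the coordinate on $\bigoplus_{n \in \overline{\N^*}} L^2(Y_n, \nu_n; \C^n)$, with the $Y_n$ pairwise disjoint Borel subsets of $\Sigma(X)$ and the $\nu_n$ finite. Writing $\eta = \bigoplus \eta^{(n)}$, I would select a Borel unit vector field $v_n \colon Y_n \to \C^n$ with $v_n(x) \in \C \cdot \eta^{(n)}(x)$ whenever $\eta^{(n)}(x) \ne 0$, and set $\xi = \bigoplus c_n v_n$ with $c_n > 0$ small enough that $\sum c_n^2 \nu_n(Y_n) < \infty$; then $\xi \in \Hi$, each component is nonzero $\nu_n$-a.e., so $\mu_\xi$ is equivalent to the scalar spectral measure and by (3) $\xi$ is dominant. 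The Borel function $g$ on $\bigsqcup Y_n$ defined by $g|_{Y_n}(x) = |\eta^{(n)}(x)|/c_n$ satisfies $\int |g|^2 d\mu_\xi = \|\eta\|^2$, so $g \in L^2(\mu_\xi)$; approximating $g$ by polynomials $p_k$ in $L^2(\mu_\xi)$ gives $p_k(X)\xi \to \eta$ in $\Hi$, placing $\eta$ in the closed cyclic span of $\xi$. The only non-bookkeeping point is the Borel selection of $v_n$, which I would handle by a standard measurable selection applied to the closed graph $\{(x, v) : v \in \C \cdot \eta^{(n)}(x),\ |v| = 1\}$.
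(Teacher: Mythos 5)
Your proof is correct, but it is organized quite differently from the paper's, which for parts (1)--(3) simply defers to \cite{Sim4--15} and \cite{Conw--07}, proves (4) directly through the functional-calculus criterion (2), and leaves (5)--(6) to the reader or to \cite{BrHN}. You instead make (3) the hub: the single identity $\mu_\eta(B) = \Vert E_X(B)\eta \Vert^2$ does yield (3) in both directions, and then (2), (4) and (6) follow exactly as you say (for (2) one also uses $f(X) = f(X)E_X(\{f \ne 0\})$ and $\Vert f(X)\xi \Vert^2 = \int \vert f \vert^2 d\mu_\xi$). Your commutant argument for (5) is a genuine proof where the paper offers none, and it is sound: the direction of the equivalence you actually need (cyclic vector $\Rightarrow$ $\{X\}'$ abelian) reduces to computing the commutant of multiplication by $x$ on $L^2(\mu_{\xi_0})$, which is standard. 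Your construction for (1) also works as written --- the ``measurable selection'' is in fact explicit ($v_n = \eta^{(n)}/\vert\eta^{(n)}\vert$ where $\eta^{(n)} \ne 0$, a fixed unit vector elsewhere), so no selection theorem is needed, and the factorization $\eta = g\,\xi$ with $g \in L^2(\mu_\xi)$ together with density of polynomials in $L^2$ of a compactly supported measure gives the refinement. The one point to be aware of is architectural rather than mathematical: you prove (1) by invoking the Hahn--Hellinger decomposition, which in this paper is Theorem~\ref{mainthHH}, stated \emph{after} this proposition and phrased in terms of scalar-valued spectral measures whose very existence is the content of (1); moreover the standard proofs of Hahn--Hellinger (including Simon's, which the paper cites for (1)) \emph{begin} by producing a dominant vector via an exhaustion of $\Hi$ by cyclic subspaces ordered by absolute continuity. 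So your derivation of (1) is legitimate if the multiplicity theorem is taken as a black box in its existence form (which does not presuppose dominant vectors), but it inverts the logical order intended here; a direct argument --- decompose $\Hi$ into mutually orthogonal cyclic subspaces with generators $\eta_1 = \eta, \eta_2, \hdots$, set $\xi = \sum_k c_k \eta_k$ with $c_k > 0$ summable in square, note $\mu_\xi = \sum_k c_k^2 \mu_{\eta_k}$ by orthogonality and conclude by your own part (6) --- would fit the paper's development better and costs little more.
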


\begin{proof}[References for the proof]
For~\ref{1DEPropDominant}
and~\ref{2DEPropDominant},
see~\cite{Sim4--15}, Lemma 5.4.7 and Problem~3 of \S~5.4.
For~\ref{3DEPropDominant},
see~\cite{Conw--07}, Theorem IX.8.9.
For~\ref{4DEPropDominant}, let $\xi \in \Hi$
and $f \in \mathcal B (\Sigma (X))$ be such that $f(X) \xi = 0$;
then $f(X) X^n \xi = X^n f(X) \xi = 0$ for all $n \ge 0$,
hence $f(X) \eta = 0$ for all $\eta$
in the closed convex hull of $\{X^n \xi\}_{n \in \N}$;
if $\xi$ is cyclic then $f(X) \eta = 0$ for all $\eta \in \Hi$,
hence $f(X) = 0$, and therefore $\xi$ is dominant.
We leave the proofs of~\ref{5DEPropDominant}
and~\ref{6DEPropDominant} to the reader;
alternatively, see~\cite[Proposition 2.2 and Corollary 2.5]{BrHN}.
\end{proof}

The marked spectrum of a scalar multiple of a bounded self-adjoint operator
can easily be written in terms of the marked spectrum of the original operator.
For future reference, we make this precise in the following proposition,
which is an immediate consequence of the definitions.

\begin{prop}
\label{rescale}
Let $\Hi$ be a separable Hilbert space,
$X$ a bounded self-adjoint operator on $\Hi$,
and $\xi \in \Hi$.
Let $k > 0$ be a positive real number and let $Y = k X$.
Denote by $\mu^X_\xi$ the local spectral measure of $X$ at $\xi$
and by $\mu^Y_\xi$ the local spectral measure of $Y$ at~$\xi$.
Let $\mathopen[ m, M \mathclose]$ be the convex hull of the spectrum of $X$.
Assume that $\mu^X_\xi$ is of the form $\rho^X_\xi \lambda$,
where $\rho^X_\xi$ is a function in $L^1( \mathopen[ m, M \mathclose] , \lambda)$
with values in~$\R_+$
and where $\lambda$ denotes the Lebesgue measure
on $\mathopen[ m, M \mathclose]$.
Then:
\begin{enumerate}[label=(\arabic*)]
\item\label{1DErescale}
$\Vert Y \Vert = k \Vert X \Vert$,
\item\label{2DErescale}
$\Sigma (Y) = k \Sigma (X)$,
\item\label{3DErescale}
$\mu^Y_\xi = \rho^Y_\xi \lambda$
where $\rho^Y_\xi (x) = k^{-1} \rho^X_\xi (x/k)$
for all $x \in \mathopen[ km, kM \mathclose]$.
\end{enumerate}
\end{prop}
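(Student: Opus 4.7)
The plan is to verify the three claims in order, since each is essentially a direct unwinding of a definition, with (3) requiring a change of variables in the Lebesgue integral.

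For \ref{1DErescale}, I would simply note that $\|kX\| = |k|\cdot \|X\| = k\|X\|$ since $k>0$, which is a standard property of the operator norm. For \ref{2DErescale}, the key observation is that $\lambda\,\mathrm{Id} - kX = k\bigl((\lambda/k)\,\mathrm{Id} - X\bigr)$, and since $k>0$ the factor $k\,\mathrm{Id}$ is invertible in $\Li(\Hi)$. Hence $\lambda\,\mathrm{Id} - kX$ is invertible if and only if $(\lambda/k)\,\mathrm{Id} - X$ is, which gives $\Sigma(Y) = k\,\Sigma(X)$.

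The substantive step is \ref{3DErescale}. I would first establish the relationship between the projection-valued spectral measures $E_X$ on $\Sigma(X)$ and $E_Y$ on $\Sigma(Y)$. By Borel functional calculus, $kX = \int_{\Sigma(X)} k x\,dE_X(x)$, and on the other hand $Y = \int_{\Sigma(Y)} y\,dE_Y(y)$; by uniqueness of the spectral resolution, $E_Y$ is the pushforward of $E_X$ under the homeomorphism $\varphi \colon x \mapsto kx$ from $\Sigma(X)$ onto $\Sigma(Y) = k\,\Sigma(X)$. Concretely, $E_Y(B) = E_X(k^{-1} B)$ for every Borel set $B \subseteq \Sigma(Y)$. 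Passing to local spectral measures gives
\[
\mu^Y_\xi(B) \;=\; \langle E_Y(B)\xi \sca \xi\rangle \;=\; \langle E_X(k^{-1}B)\xi \sca \xi\rangle \;=\; \mu^X_\xi(k^{-1}B).
\]

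Finally, I would insert the hypothesis $\mu^X_\xi = \rho^X_\xi\,\lambda$ on $\mathopen[m,M\mathclose]$ and apply a change of variable. For a Borel set $B \subseteq \mathopen[km, kM\mathclose]$, setting $x = k t$ in the integral yields
\[
\mu^Y_\xi(B) \;=\; \int_{k^{-1}B} \rho^X_\xi(t)\,d\lambda(t) \;=\; \int_{B} k^{-1}\rho^X_\xi(x/k)\,d\lambda(x),
\]
where the factor $k^{-1}$ comes from $d\lambda(t) = k^{-1} d\lambda(x)$ under the scaling $\varphi$. This identifies the Radon--Nikodym derivative $\rho^Y_\xi(x) = k^{-1}\rho^X_\xi(x/k)$ on $\mathopen[km, kM\mathclose]$. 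There is no real obstacle here: the one point demanding attention is getting the Jacobian factor $k^{-1}$ on the correct side so that $\int \rho^Y_\xi\,d\lambda = \int \rho^X_\xi\,d\lambda$, which confirms that $\mu^Y_\xi$ has the same total mass as $\mu^X_\xi$, as it must.
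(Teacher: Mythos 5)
Your proof is correct. The paper gives no argument for this proposition, stating only that it is ``an immediate consequence of the definitions,'' and your write-up is exactly the intended unwinding: norm homogeneity, the factorization $\lambda\,\mathrm{Id} - kX = k\bigl((\lambda/k)\,\mathrm{Id} - X\bigr)$, the identification $E_Y(B) = E_X(k^{-1}B)$ via uniqueness of the spectral resolution, and the change of variables producing the Jacobian factor $k^{-1}$.
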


For our analysis of lattice graphs $L_d$ in Section~\ref{SectionRayLL},
we will need the following facts on local spectral measures
of some operators defined on tensor products.
Let $\Hi_1, \Hi_2$ be two separable Hilbert spaces.
For $j \in \{1, 2\}$,
let $X_j$ be a bounded self-adjoint operator on $\Hi_j$;
choose a vector $\xi_j$ in $\Hi_j$,
and let $\mu_j$ be the local spectral measure of $X_j$ at $\xi_j$;
we view $\mu_j$ as a finite measure on $\R$
with closed support contained in $\Sigma (X_j)$.
Let ${\rm Id}_j$ denote the identity operator on $\Hi_j$.
Let $\Hi$ be the Hilbert space tensor product $\Hi_1 \otimes \Hi_2$
and let $X \in \Li(\Hi)$ be the operator
$X_1 \otimes {\rm Id}_2 + {\rm Id}_1 \otimes X_2$.
It is well-known that the operator $X$ is bounded, self-adjoint,
of norm $\Vert X \Vert = \Vert X_1 \Vert + \Vert X_2 \Vert$,
and of spectrum
$$
\Sigma (X) = \{ z \in \R : z = x+ y
\hskip.2cm \text{for some} \hskip.2cm
x \in \Sigma (X_1)
\hskip.2cm \text{and} \hskip.2cm
y \in \Sigma (X_2)
\}
$$
(see \cite{BrPe--66} or \cite{Sche--69}).
Let $\xi = \xi_1 \otimes \xi_2 \in \Hi$
and let $\mu$ be the local spectral measure of $X$ at~$\xi$.

\begin{prop}
\label{localmeasuretensorproduct}
Let $X_1$ be a self-adjoint operator on $\Hi_1$
and $X_2$ a self-adjoint operator on $\Hi_2$;
let $\xi_1$, $\xi_2$, $\mu_1$, $\mu_2$,
$X = X_1 \otimes {\rm Id}_2 + {\rm Id}_1 \otimes X_2$,
$\xi = \xi_1 \otimes \xi_2$,
and $\mu$ be as above.
\par

Then $\mu$ is the convolution product $\mu_1 \ast \mu_2$.
\end{prop}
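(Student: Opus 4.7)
The plan is to identify $\mu$ and $\mu_1\ast\mu_2$ by showing that they have the same Fourier transform (equivalently, the same moment sequence), using the fact that both measures are compactly supported and hence determined by these data.

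The key structural observation is that the two summands $X_1\otimes{\rm Id}_2$ and ${\rm Id}_1\otimes X_2$ are bounded self-adjoint operators which commute. By the multivariate Borel functional calculus for commuting self-adjoint operators, one has $f(X_1\otimes{\rm Id}_2,\,{\rm Id}_1\otimes X_2) = f(X_1,X_2)$ in the natural tensor sense; in particular, for the character $f(x,y) = e^{it(x+y)}$ one obtains
\[
e^{itX} \;=\; e^{it(X_1\otimes{\rm Id}_2)}\,e^{it({\rm Id}_1\otimes X_2)} \;=\; e^{itX_1}\otimes e^{itX_2}.
\]
Evaluating the inner product on the rank-one tensor $\xi = \xi_1\otimes\xi_2$ then gives
\[
\widehat{\mu}(t) \;=\; \langle e^{itX}\xi \sca \xi\rangle \;=\; \langle e^{itX_1}\xi_1\sca\xi_1\rangle\,\langle e^{itX_2}\xi_2\sca\xi_2\rangle \;=\; \widehat{\mu_1}(t)\,\widehat{\mu_2}(t),
\]
and the right-hand side is the Fourier transform of $\mu_1\ast\mu_2$. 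By injectivity of the Fourier transform on finite Borel measures on $\R$, this forces $\mu = \mu_1\ast\mu_2$.

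A self-contained alternative, avoiding any mention of spectral exponentials, is to compare polynomial moments. Since $X_1\otimes{\rm Id}_2$ and ${\rm Id}_1\otimes X_2$ commute, the binomial theorem yields $X^n = \sum_{k=0}^n \binom{n}{k}\, X_1^k\otimes X_2^{n-k}$, so that
\[
\int_{\R} z^n\,d\mu(z) \;=\; \langle X^n\xi\sca\xi\rangle \;=\; \sum_{k=0}^n \binom{n}{k} \int x^k\,d\mu_1(x)\int y^{n-k}\,d\mu_2(y) \;=\; \int_{\R^2} (x+y)^n\,d(\mu_1\otimes\mu_2)(x,y),
\]
and the last integral is $\int z^n\,d(\mu_1\ast\mu_2)(z)$ by definition of convolution as the pushforward of the product measure under addition. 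Since $\mu_1$ and $\mu_2$ are compactly supported, so is $\mu_1\ast\mu_2$; both $\mu$ and $\mu_1\ast\mu_2$ therefore sit on a common compact interval and are determined by their moments, giving $\mu = \mu_1\ast\mu_2$.

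The only point requiring any care is the commutativity-plus-functional-calculus identity $e^{itX} = e^{itX_1}\otimes e^{itX_2}$ (or equivalently the binomial expansion of $X^n$); once this is in hand the result is immediate. I would simply cite the standard fact that the spectral projections of $X_1\otimes{\rm Id}_2$ commute with those of ${\rm Id}_1\otimes X_2$, being of the respective forms $E_{X_1}(B)\otimes{\rm Id}_2$ and ${\rm Id}_1\otimes E_{X_2}(B')$, so that the joint functional calculus is well defined and behaves as expected on the generating exponentials.
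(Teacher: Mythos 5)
Your proposal is correct, and your ``self-contained alternative'' --- expanding $X^n$ by the binomial theorem using the commutativity of $X_1\otimes{\rm Id}_2$ and ${\rm Id}_1\otimes X_2$, matching moments with those of $\mu_1\ast\mu_2$, and invoking compact support to conclude --- is exactly the proof given in the paper. The Fourier-transform version is a harmless repackaging of the same idea and works equally well.
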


\begin{proof}
Recall that the convolution of two finite measure $\nu_1, \nu_2$ on $\R$
is the direct image of the measure $\nu_1 \otimes \nu_2$ on $\R^2$
by the map $\R^2 \to \R , \hskip.2cm (x,y) \mapsto x+y$.
We have
$$
\int_\R f(z) d (\nu_1 \ast \nu_2)(z) = \int_\R \int_\R f(x+y) d\nu_1(x) d\nu_2(y) 
$$
for any continuous function $f \, \colon \R \to \C$
which tends to zero at infinity;
when $\nu_1$ and $\nu_2$ are measures with compact support,
this holds more generally
for any continuous function $f \, \colon \R \to \C$.
See \cite[Chapter 7, Exercise 5]{Rudi--66}.
\par

For the next computation, observe that the operators
$X_1 \otimes {\rm Id}_2$ and ${\rm Id}_1 \otimes X_2$ commute,
and that
$\left( X_1 \otimes {\rm Id}_2 \right)^j \left( {\rm Id}_1 \otimes X_2 \right)^k
= X_1^j \otimes X_2^k$ for all $j, k \ge 0$.
For all $n \in \N$, we have
$$
\begin{aligned}
\int_{\Sigma (X)} & z^n d\mu(z)
= \langle ( X_1 \otimes {\rm Id}_2 + {\rm Id}_1 \otimes X_2 )^n (\xi_1 \otimes \xi_2)
\sca \xi_1 \otimes \xi_2 \rangle
\\
&= \left\langle 
\sum_{j=0}^n \binom{n}{j} (X_1^j \otimes X_2^{n-j}) (\xi_1 \otimes \xi_2) 
\hskip.2cm \bigg\vert \hskip.2cm
\xi_1 \otimes \xi_2 \right\rangle
\\
&= \sum_{j=0}^n \binom{n}{j} 
\langle X_1^j \xi_1 \sca \xi_1 \rangle
\langle X_2^{n-j} \xi_2 \sca \xi_2 \rangle ,
\end{aligned}
$$
hence
$$
\begin{aligned}
\int_{\Sigma (X)} & z^n d\mu(z)
= \sum_{j=0}^n \binom{n}{j} 
\int_{\Sigma (X_1)} x^j d\mu_1(x) \int_{\Sigma (X_2)} y^{n-j} d\mu_2 (y)
\\
&= \int_{\Sigma (X_1)} \int_{\Sigma (X_2)}
\left( \sum_{j=0}^n \binom{n}{j} x^j y^{n-j} \right) d\mu_1(x) d\mu_2(y)
\\
&= \int_{\Sigma (X_1)} \int_{\Sigma (X_2)}
(x+y)^n d\mu_1(x) d\mu_2(y)
= \int_{\Sigma (X)} z^n d(\mu_1 \ast \mu_2) (z) .
\end{aligned}
$$
This shows that the moments of $\mu$
are the same as the moments of $\mu_1 \ast \mu_2$.
Since $\mu$ and $\mu_1 \ast \mu_2$ are measures with compact support,
it follows that $\mu = \mu_1 \ast \mu_2$.
\end{proof}

\begin{rem}
\label{remconvol}
For each positive integer $d$, there is a similar fact which holds
for operators of the form
$$
X = X_1 \otimes {\rm Id_2} \otimes \cdots \otimes {\rm Id_d}
+ {\rm Id_1} \otimes X_2 \otimes \cdots \otimes {\rm Id_d}
+ \cdots + {\rm Id_1} \otimes {\rm Id_2} \otimes \cdots \otimes X_d
$$
which have local spectral measures of the form
$\mu = \mu_1 \ast \mu_2 \ast \cdots \ast \mu_d$.
\end{rem}

\subsection{Multiplication operators}
\label{SubsMultOp}

We recall successively the definition
of the Hilbert space $L^2(\Sigma, \mu, \mathfrak m)$,
some facts on functions $\varphi \in L^\infty(\Sigma, \mu)$,
and on multiplications operators $M_{\Sigma, \mu, \mathfrak m, \varphi}$.
\par

Let $\Sigma$ be a non-empty metrizable compact space.
Let $\B_\Sigma$ the $\sigma$-algebra of Borel subsets of $\Sigma$,
and $\mu$ a finite positive measure on $(\Sigma, \B_\Sigma)$.
Let $\mathfrak m \, \colon \Sigma \to \overline{\N^*}$ be a measurable function.
Denote by $\ell^2_\infty$ the Hilbert space
of square summable sequences $(z_j)_{j \ge 1}$ of complex numbers
and, for each $n \ge 1$, by $\ell^2_n$ the subspace
of sequences such that $z_j = 0$ for all $j \ge n+1$.
Let $L^2(\Sigma, \mu, \mathfrak m)$
be the separable Hilbert space of measurable functions $\xi \, \colon \Sigma \to \ell^2_\infty$
such that $\xi (x) \in \ell^2_{\mathfrak m (x)}$ for all $x \in \Sigma$
and $\int_\Sigma \Vert \xi (x) \Vert^2_{\ell^2_\infty} d\mu(x) < \infty$.
In more sophisticated terms, $L^2(\Sigma, \mu, \mathfrak m)$ is the Hilbert space
of square summable vector fields
of the $\mu$-measurable field of Hilbert spaces $(\Hi_x)_{x \in \Sigma}$,
where $\Hi_x = \Ki_{\mathfrak m(x)}$ for all $x \in \Sigma$.
The space $L^2(\Sigma, \mu, \mathfrak m)$ can also be seen as a Hilbert direct sum
$\bigoplus_{n \in \overline{\N^*}} L^2 (\Sigma_n, \mu_n, \Ki_n)$,
where $\Sigma_n = \mathfrak m^{-1}(n)$,
the measure $\mu_n$ is defined by $\mu_n(B) = \mu(B \cap \Sigma_n)$
for all Borel sets $B \in \B_\Sigma$,
and $L^2 (\Sigma_n, \mu_n, \Ki_n)$ is the Hilbert space
of square-summable $\Ki_n$-valued functions on $(\Sigma_n, \mu_n)$.
Note that $\Sigma_n = \emptyset$ when $\mathfrak m (x) \ne n$
for all $x \in \Sigma$, 
and more generally that $\mu_n = 0$
and $L^2 (\Sigma_n, \mu_n, \Ki_n) = \{ 0 \}$
when $\mathfrak m (x) \ne n$
for $\mu$-almost all $x \in \Sigma$.
Note also that $\mu_n$ can be seen
either as a measure on $\Sigma_n$,
or as a measure on $\Sigma$
such that $\mu_n(\Sigma \smallsetminus \Sigma_n) = 0$;
in the latter case, the measures $\mu_n$~'s are pairwise singular with each other.
\par

Let $\varphi \, \colon \Sigma \to \R$ be a measurable
complex-valued function on $\Sigma$.
The \textbf{essential supremum} of $\varphi$
is the infimum $\Vert \varphi \Vert_\infty$ of the numbers $c \ge 0$
such that $\mu \left( \{ x \in \Sigma : \vert \varphi(x) \vert > c \} \right) = 0$.
We assume from now on that $\varphi$ is \textbf{essentially bounded},
i.e., that $\Vert \varphi \Vert_\infty < \infty$.
The \textbf{essential range} of $\varphi$
is the set $R_\varphi$ of complex numbers~$z$ such that
$\mu ( \{ x \in \Sigma : \vert \varphi(x) - z \vert < \varepsilon \} ) > 0$
for all $\varepsilon > 0$;
we have $\Vert \varphi \Vert_\infty = \sup \{ \vert z \vert : z \in R_\varphi \}$.
In other words,
$R_\varphi$ is the closed support of the measure $\varphi_*(\mu)$ on $\C$,
the push forward of $\mu$ by $\varphi$,
and therefore $R_\varphi$ is a closed subset of $\C$,
indeed a compact subset of $\C$ since $\varphi$ is essentially bounded.
Below, $\Vert \varphi \Vert_\infty$ and $R_\varphi$ will be
the norm and the spectrum of a multiplication operator.
\par

For $z \in \C$ and $\varepsilon > 0$, let $D_\varepsilon(z)$
denote the closed disc $\{ w \in \C : \vert w - z \vert \le \varepsilon \}$.
Note that $\mu ( \varphi^{-1} (D_\varepsilon(z))) > 0$ for all $\varepsilon > 0$
when $z \in R_\varphi$.
For $z \in R_\varphi$, the \textbf{essential pre-image} $\varphi^{-1}_\mu (z)$
is defined as the set of those $x \in \Sigma$ for which,
for every neighborhood $V$ of $x$ in $\Sigma$, we have
$$
\liminf_{\varepsilon \to 0} \frac
{ \mu \left( V \cap \varphi^{-1} (D_\varepsilon(z) ) \right) }
{ \mu \left( \varphi^{-1} (D_\varepsilon(z) ) \right) }
> 0 .
$$
For $z \in \C \smallsetminus R_\varphi$, set $\varphi^{-1}_\mu (z) = \emptyset$.
When $\varphi$ is continuous, $\varphi^{-1}_\mu (z)$
is contained in $\varphi^{-1}(z)$
\cite[Theorem 6]{AbKr--73};
equality need not hold 
\cite[Pages 853--854]{AbKr--73}.
Below, the cardinalities of the essential pre-images of $\varphi$
will be the values of the spectral multiplicity function of a multiplication operator.
\par

Let $\varphi, \varphi' \, \colon \Sigma \to \C$ be two measurable functions
which are equal $\mu$-almost every where;
then the norms $\Vert \varphi \Vert_\infty$, $\Vert \varphi' \Vert_\infty$ are equal,
$\varphi, \varphi'$ have the same essential range,
and $\varphi, \varphi'$ have the same essential pre-images.
From now on, we consider such functions as being equal,
and write (abusively) ``function''
for ``equivalence class of functions modulo equality $\mu$-almost everywhere''.
The space $L^\infty (X, \mu)$
of essentially bounded complex-valued functions on $(\Sigma, \mu)$
is a Banach space for the norm $\Vert \cdot \Vert_\infty$.
It is the dual of $L^1 (X, \mu)$,
hence it can be considered with both its norm topology
and its w$^*$-topology (see for example \cite[Theorem 1.45]{Doug--72}).
\par

Suppose that $\Sigma$ is a nonempty compact subset of the real line.
Denote by $\mathcal C (\Sigma)$ the algebra of continuous functions on $\Sigma$,
with the $\sup$-norm,
and by $\mathcal P (\Sigma)$ the subalgebra of functions
which are restrictions to $\Sigma$ of polynomial functions on $\R$.
Then $\mathcal P (\Sigma)$ is dense in $\mathcal C (\Sigma)$,
by the Stone--Weierstrass theorem,
and the natural image of $\mathcal C (\Sigma)$ in $L^\infty (\Sigma, \mu)$
is w$^*$-dense, see \cite[Corollary 4.53]{Doug--72}.
It follows that $\mathcal P (\Sigma)$ is w$^*$-dense in $L^\infty (\Sigma, \mu)$.

\begin{defn}
\label{DefMO}
Let $\Sigma, \mu, \mathfrak m$ and $\varphi$ be as above.
The \textbf{multiplication operator} $M_{\Sigma, \mu, \mathfrak m, \varphi}$
is the operator defined on the space $L^2(\Sigma, \mu, \mathfrak m)$ by
$$
(M_{\Sigma, \mu, \mathfrak m, \varphi} \xi) (x) = \varphi(x) \xi (x)
\hskip.5cm \text{for all} \hskip.2cm
\xi \in L^2(\Sigma, \mu, \mathfrak m)
\hskip.2cm \text{and} \hskip.2cm
x \in \Sigma .
$$
When $\mathfrak m$ is the constant function of value $1$,
we write $M_{\Sigma, \mu, \varphi}$
instead of $M_{\Sigma, \mu, \mathfrak m, \varphi}$.
\par

A \textbf{straight multiplication operator} $M_{\Sigma, \mu, \mathfrak m}$
is an operator of this type in the particular case
of a compact subset $\Sigma$ of the real line
and of the function $\varphi$ given by the inclusion $\Sigma \subset \R$,
so that $(M_{\Sigma, \mu, \mathfrak m} ) (x) = x \xi (x)$
for all $\xi \in L^2(\Sigma, \mu, \mathfrak m)$
and $x \in \Sigma$.
\end{defn}

\begin{prop}
\label{PropMO}
Let $\Sigma$, $\mu$,
$\mathfrak m \, \colon \Sigma \to \overline{\N^*}$,
$L^2(\Sigma, \mu, \mathfrak m)$,
$\varphi \in L^\infty(\Sigma, \mu)$ be as above,
and $M_{\Sigma, \mu, \mathfrak m, \varphi}$
the corresponding multiplication operator,
as in Definition~\ref{DefMO}.
Suppose now that $\varphi$ is a real-valued function.
\begin{enumerate}[label=(\arabic*)]
\item\label{1DEPropMO}
$M_{\Sigma, \mu, \mathfrak m, \varphi}$ is a bounded self-adjoint operator
with norm $\Vert M_{\Sigma, \mu, \mathfrak m, \varphi} \Vert
= \Vert \varphi \Vert_\infty$.
\item\label{2DEPropMO}
The spectrum of $M_{\Sigma, \mu, \mathfrak m, \varphi}$ is the essential range $R_\varphi$ of $\varphi$,
and $\lambda \in \R$ is an eigenvalue of $M_{\Sigma, \mu, \mathfrak m, \varphi}$
if and only if $\mu (\{ x \in \Sigma : \varphi(x) = \lambda \}) > 0$.
\item\label{3DEPropMO}
The spectral measure $E_{M_{\Sigma, \mu, \mathfrak m, \varphi}}$ is given by
$E_{M_{\Sigma, \mu, \mathfrak m, \varphi}} (B) = M_{\Sigma, \mu, \mathfrak m, \chi_{ \varphi^{-1}(B) } }$
for any Borel subset~$B$ of $R_\varphi$,
where $\chi_{ \varphi^{-1} (B) }$ stands for the characteristic function
of the inverse image of $B$ by $\varphi$.
\item\label{4DEPropMO}
The measure $\mu$
is a scalar-valued spectral measure for~$M_{\Sigma, \mu, \mathfrak m, \varphi}$.
\end{enumerate}
\par

Suppose that, in particular, $\Sigma \subset \R$
and that $\varphi$ is given by the inclusion $\Sigma \subset \R$;
let $M_{\Sigma, \mu, \mathfrak m}$ be
the corresponding straight multiplication operator,
as in Definition~\ref{DefMO}.
Let $\Sigma_\mu$ denote the closed support of $\mu$.
\begin{enumerate}[label=(\arabic*)]
\addtocounter{enumi}{4}
\item\label{5DEPropMO}
$\Vert M_{\Sigma, \mu, \mathfrak m} \Vert
= \sup \{ \vert x \vert : x \in \Sigma_\mu \}$.
\item\label{6DEPropMO}
The spectrum of $M_{\Sigma, \mu, \mathfrak m}$ is $\Sigma_\mu$.
\item\label{7DEPropMO}
$E_{M_{\Sigma, \mu, \mathfrak m}} (B) = M_{\Sigma, \mu, \mathfrak m, \chi_B}$
for any Borel subset $B$ of $\Sigma_\mu$.
\item\label{8DEPropMO}
$\mu$ is a scalar-valued spectral measure for $M_{\Sigma, \mu, \mathfrak m}$. 
\end{enumerate}
\par

Suppose moreover that $\mathfrak m = \mathbf 1_\Sigma$
is the constant function of value $1$,
so that the operator $M = M_{\Sigma, \mu, \mathbf 1_\Sigma}$
acts on $L^2(\Sigma, \mu)$.
\begin{enumerate}[label=(\arabic*)]
\addtocounter{enumi}{8}
\item\label{9DEPropMO}
For $\xi \in L^2(\Sigma, \mu)$,
the following conditions are equivalent:
\begin{enumerate}[label=(\roman*)]
\item
$\xi$ is cyclic for $M$,
\item
$\xi$ is dominant for $M$,
\item
$\mu (\{ x \in \Sigma : \xi(x) = 0 \}) = 0$.
\end{enumerate}
\item\label{10DEPropMO}
In particular, the function on $\Sigma$ of constant value $1$
is a cyclic vector for~$M$.
\end{enumerate}
\end{prop}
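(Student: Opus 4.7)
The plan is to prove items \ref{1DEPropMO}--\ref{4DEPropMO} by direct $L^2$ computation, deduce \ref{5DEPropMO}--\ref{8DEPropMO} as the specialization $\varphi(x) = x$, and obtain \ref{9DEPropMO}--\ref{10DEPropMO} by combining the preceding items with Proposition~\ref{PropDominant}.

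For \ref{1DEPropMO}, the estimate $\Vert M_{\Sigma, \mu, \mathfrak m, \varphi}\xi \Vert^2 = \int_\Sigma \vert \varphi(x) \vert^2 \Vert \xi(x) \Vert^2_{\ell^2_\infty} d\mu(x) \le \Vert \varphi \Vert_\infty^2 \Vert \xi \Vert^2$ gives $\Vert M \Vert \le \Vert \varphi \Vert_\infty$, and testing on unit vectors supported on $\{ \vert \varphi \vert > \Vert \varphi \Vert_\infty - \varepsilon \}$ (which has positive $\mu$-measure by the definition of $\Vert \cdot \Vert_\infty$) gives the reverse inequality; self-adjointness is immediate from $\varphi$ being real. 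For \ref{2DEPropMO}, when $\lambda \notin R_\varphi$ the function $1/(\varphi - \lambda)$ is essentially bounded and its associated multiplication operator inverts $\lambda {\rm Id} - M$; when $\lambda \in R_\varphi$, unit vectors concentrated on $\varphi^{-1}(D_\varepsilon(\lambda))$ are approximate eigenvectors at $\lambda$, ruling out invertibility. The eigenvalue statement follows because a nonzero element of $\ker (\lambda {\rm Id} - M)$ must be supported modulo $\mu$-null sets on $\{ \varphi = \lambda \}$, so this set must have positive measure. For \ref{3DEPropMO}, I would verify that $B \mapsto M_{\Sigma, \mu, \mathfrak m, \chi_{\varphi^{-1}(B)}}$ is a projection-valued measure on the Borel sets of $R_\varphi$, approximate $\varphi$ uniformly by simple functions to show $\int x \, dE(x) = M_{\Sigma, \mu, \mathfrak m, \varphi}$, and invoke the uniqueness assertion in the spectral theorem.

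For \ref{4DEPropMO}, the idea is to exhibit a concrete dominant vector. Using $L^2(\Sigma, \mu, \mathfrak m) = \bigoplus_{n \in \overline{\N^*}} L^2(\Sigma_n, \mu_n, \Ki_n)$, I would choose $\xi$ whose component in each summand has strictly positive first coordinate (suitably weighted so that $\xi \in L^2$) and zero remaining coordinates. Then by \ref{3DEPropMO} the local spectral measure is $\mu_\xi(B) = \int_{\varphi^{-1}(B)} \Vert \xi(x) \Vert^2 d\mu(x)$, which vanishes precisely when $\mu(\varphi^{-1}(B)) = 0$, that is, precisely when the corresponding spectral projection is zero; Proposition~\ref{PropDominant}\ref{3DEPropDominant} then yields dominance, and under the identification of $\mu$ with its push-forward $\varphi_* \mu$ on $R_\varphi$ the class of $\mu_\xi$ coincides with that of $\mu$. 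Items \ref{5DEPropMO}--\ref{8DEPropMO} follow as immediate specializations: the essential range of the inclusion $\Sigma \hookrightarrow \R$ is exactly $\Sigma_\mu$, one has $\Vert \varphi \Vert_\infty = \sup \{ \vert x \vert : x \in \Sigma_\mu \}$, and $\varphi^{-1}(B) = B$.

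For the multiplicity-one items \ref{9DEPropMO}--\ref{10DEPropMO}, I would first prove \ref{10DEPropMO}: the closed linear span of $\{ M^n \mathbf 1 \}_{n \in \N}$ is the $L^2$-closure of $\mathcal P(\Sigma_\mu)$, which contains $\mathcal C(\Sigma_\mu)$ by Stone--Weierstrass and hence equals the whole of $L^2(\Sigma, \mu)$, so $\mathbf 1$ is cyclic. Then Proposition~\ref{PropDominant}\ref{5DEPropDominant} yields (i)$\Leftrightarrow$(ii) in \ref{9DEPropMO}. For (ii)$\Leftrightarrow$(iii), note that $\mu_\xi$ has density $\vert \xi \vert^2$ with respect to $\mu$: if $\xi$ vanishes on a set of positive $\mu$-measure then $\mu_\xi$ is not equivalent to $\mu$, contradicting dominance in view of \ref{8DEPropMO}; conversely, if $\xi$ is nonzero $\mu$-a.e.\ then $\mu_\xi$ and $\mu$ are mutually absolutely continuous, so Proposition~\ref{PropDominant}\ref{3DEPropDominant} together with \ref{8DEPropMO} yields dominance. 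The only real obstacle is the bookkeeping in \ref{4DEPropMO}, namely matching the measure $\mu$ on $\Sigma$ against a scalar-valued spectral measure that formally lives on the spectrum $R_\varphi$; the remaining items reduce to direct verifications.
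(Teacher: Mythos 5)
Your proposal is correct, and for the parts the paper actually proves it takes a noticeably different route. The paper dispatches items \ref{1DEPropMO}--\ref{4DEPropMO} by citing standard references (Douglas, Abrahamse--Kriete, Kriete) after first reducing to the case $\Sigma = \Sigma_\mu$, whereas you prove them directly; your verifications (the norm estimate with test vectors on $\{\vert\varphi\vert > \Vert\varphi\Vert_\infty - \varepsilon\}$, the resolvent $1/(\varphi-\lambda)$, the projection-valued measure check, and the explicit dominant vector with nonvanishing first coordinate in each fiber combined with Proposition~\ref{PropDominant}\ref{3DEPropDominant}) are all sound, and you correctly flag the one genuine subtlety in \ref{4DEPropMO}, namely that ``$\mu$ is a scalar-valued spectral measure'' must be read through the identification with $\varphi_*\mu$ on $R_\varphi$. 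For \ref{9DEPropMO}--\ref{10DEPropMO} the logical order and the key mechanism differ: the paper proves (iii)$\Rightarrow$(i) directly by a duality argument --- $\eta\perp M^n\xi$ for all $n$ forces $\int f\,\xi\overline{\eta}\,d\mu=0$ for all polynomials $f$, hence for all $f\in L^\infty(\Sigma,\mu)$ by w$^*$-density, hence $\xi\overline{\eta}=0$ in $L^1$ and $\eta=0$ --- and then derives \ref{10DEPropMO} as the special case $\xi=\mathbf 1$, handling not-(iii)$\Rightarrow$not-(ii) by functional calculus with an indicator function. You instead prove \ref{10DEPropMO} first via Stone--Weierstrass and $L^2$-density of $\mathcal C(\Sigma_\mu)$, use it to activate Proposition~\ref{PropDominant}\ref{5DEPropDominant} for (i)$\Leftrightarrow$(ii), and settle (ii)$\Leftrightarrow$(iii) through the density formula $d\mu_\xi=\vert\xi\vert^2\,d\mu$ together with Proposition~\ref{PropDominant}\ref{3DEPropDominant} and item \ref{8DEPropMO}. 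Your route avoids the $L^1$--$L^\infty$ w$^*$-density argument entirely and is arguably more transparent, at the modest cost of invoking the (standard) $L^2$-density of continuous functions for a finite Borel measure on a compact metric space; the paper's duality argument is self-contained given the w$^*$-density fact it establishes earlier in the section. No gaps.
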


\begin{proof}[On the proof]
Let $\mathfrak m_\mu$ denote
the restriction of the function $\mathfrak m$ to $\Sigma_\mu$.
The spaces $L^2(\Sigma, \mu, \mathfrak m)$
and $L^2(\Sigma_\mu, \mu, \mathfrak m_\mu)$
are canonically isomorphic,
and $M$ can be seen as an operator on $L^2(\Sigma_\mu, \mu, \mathfrak m_\mu)$.
It follows that we can assume without loss of generality
that $\Sigma = \Sigma_\mu$,
namely that the closed support of $\mu$ is the whole of $\Sigma$.
\par

The arguments to prove Claims~\ref{1DEPropMO} to~\ref{4DEPropMO} are standard;
see for example Sections 4.20 to 4.28 in~\cite{Doug--72},
or any of \cite{AbKr--73, Abra--78, Krie--86}.
\par

Let $\xi \in L^2 (\Sigma, \mu)$.
Suppose first that the condition
$\mu (\{ x \in \Sigma : \xi(x) = 0 \}) = 0$ of~\ref{9DEPropMO}~(iii) is satisfied.
Let $\eta \in L^2 (\Sigma, \mu)$
be orthogonal to $M^n \xi$ for all $n \in \N$;
we are going to show that $\eta = 0$.
Note that the product $\xi \overline{\eta}$
is in the weak$^*$ dual $L^1 (\Sigma, \mu)$ of $L^\infty (\Sigma, \mu)$, 
because $\xi$ and $\eta$ are in $L^2 (\Sigma, \mu)$.
Since
$\langle M^n \xi \sca \eta \rangle
= \int_\Sigma x^n \xi(x) \overline{ \eta(x) } \mu(x) = 0$
for all $n \in \N$, we have
$$
\int_\Sigma f(x) \xi(x) \overline{ \eta(x) } d\mu(x) = 0
$$
for all $f \in \mathcal P (\Sigma)$,
and therefore also for all $f \in L^\infty (\Sigma, \mu)$
because $\mathcal P (\Sigma)$ is w$^*$-dense in $L^\infty (\Sigma, \mu)$.
This implies that $\xi \overline{ \eta } = 0$ in $L^1 (\Sigma, \mu)$,
hence that $\xi (x) \eta (x) = 0$ for $\mu$-almost all $x \in \Sigma$, 
hence by hypothesis on $\xi$ that $\eta (x) = 0$ for $\mu$-almost all $x \in \Sigma$,
hence that $\eta = 0$.
It follows that $\xi$ is cyclic for $M$.
\par

This shows~\ref{10DEPropMO}
because the condition of~\ref{9DEPropMO}~(iii) is clearly satisfied.
for $\xi$ the constant function of value $1$.
Moreover, a vector in $L^2(\Sigma, \mu)$ is cyclic for $M$
if and only if it is dominant for $M$,
by Proposition~\ref{PropDominant}.
\par

Suppose now on the contrary $\xi \in L^2 (\Sigma, \mu)$
such that $\mu (\{ x \in \Sigma : \xi(x) = 0 \}) > 0$.
Define a Borel function $\chi \, \colon \Sigma \to \C$
by $\chi(x) = 1$ when $x$ is such that $\xi(x) \ne 0$
and $\chi(x) = 0$ otherwise.
Then $\chi(M) \ne 0$ and $\chi(M) \xi = 0$.
It follows that $\xi$ is not dominant for~$M$.
\par

This concludes the proof of~\ref{9DEPropMO}. 
\end{proof}

An operator $X_1$ on a Hilbert space $\Hi_1$
and an operator $X_2$ on a Hilbert space $\Hi_2$
are \textbf{unitarily equivalent}
if there exists a unitary operator (=~a surjective isometry)
$U \, \colon \Hi_1 \to \Hi_2$ such that $X_2 = UX_1U^*$.
\par

If $X_1 \in \Li(\Hi_1)$ and $X_2 \in \Li(\Hi_2)$ are two self-adjoint operators
which are unitarily equivalent, their spectra coincide, $\Sigma(X_1) = \Sigma(X_2)$,
and their scalar-valued spectral measures are the same.

\begin{exa}[\textbf{unitarily equivalent pairs of multiplication operators}]
\label{exuniteq}
Let $\mathopen[ a_1, b_1 \mathclose]$, $\mathopen[ a_2, b_2 \mathclose]$
be two intervals of the real line,
with $-\infty < a_1 < b_1 < \infty$ and $-\infty < a_2 < b_2 < \infty$.
We consider the Hilbert spaces $L^2( \mathopen[ a_1, b_1 \mathclose], \lambda)$
and $L^2( \mathopen[ a_2, b_2 \mathclose], \lambda)$,
where $\lambda$ is the Lebesgue measure.
Let
$$
\varphi_2 \, \colon \mathopen[ a_2, b_2 \mathclose]
\overset{\approx}{\longrightarrow} \mathopen[ a_1, b_1 \mathclose]
$$
be a function of class $\mathcal C^1$, injective,
mapping $\mathopen[ a_2, b_2 \mathclose]$
onto $\mathopen[ a_1, b_1 \mathclose]$,
and such that $\vert \varphi_2' (x) \vert > 0$
for all $x \in \mathopen]a_2, b_2\mathclose[$.
Define an operator
$M_1 = M_{ \mathopen[ a_1, b_1 \mathclose], \lambda, \mathbf 1}$
on $L^2( \mathopen[ a_1, b_1 \mathclose], \lambda)$
by
$$
(M_1 \xi_1) (x) = x \xi_1(x)
\hskip.5cm \text{for all} \hskip.2cm
\xi_1 \in L^2( \mathopen[ a_1, b_1 \mathclose])
\hskip.2cm \text{and} \hskip.2cm
x \in [a_1, b_1]
$$
and an operator $M_2 = M_{ \mathopen[ a_2, b_2 \mathclose], \lambda, \mathbf 1, \varphi_2 }$
on $L^2( \mathopen[ a_2, b_2 \mathclose], \lambda)$
by
$$
(M_2 \xi_2) (x) = \varphi_2(x) \xi_2(x)
\hskip.5cm \text{for all} \hskip.2cm
\xi_2 \in L^2( \mathopen[ a_2, b_2 \mathclose])
\hskip.2cm \text{and} \hskip.2cm
x \in [a_2, b_2] .
$$
\par
 
Then $M_1$ and $M_2$ are unitarily equivalent.
\end{exa}

\begin{proof}
Let $U \, \colon L^2( \mathopen[ a_1, b_1 \mathclose], \lambda)
\to L^2( \mathopen[ a_2, b_2 \mathclose], \lambda)$
be the operator defined by
$$
(U \xi_1) (x) = \sqrt{ \vert \varphi_2'(x) \vert } \hskip.1cm \xi_1( \varphi_2 (x) )
\hskip.5cm \text{for all} \hskip.2cm
\xi_1 \in L^2( \mathopen[ a_1, b_1 \mathclose], \lambda)
\hskip.2cm \text{and} \hskip.2cm
x \in [a_2, b_2] .
$$
Then $U$ is unitary.
Indeed, for
$\xi_1 \in L^2(\mathopen[ a_1, b_1 \mathclose], \lambda)$
and $\xi_2 \in L^2(\mathopen[ a_2, b_2 \mathclose], \lambda)$,
we have
$$
\begin{aligned}
\Vert U \xi_1 \Vert^2 &= \int_{a_2}^{b_2} \vert (U \xi_1) (x) \vert^2 dx
= \int_{a_2}^{b_2} \vert \xi_1 (\varphi_2(x)) \vert^2 \vert \varphi_2'(x) \vert dx
\\
&= \int_{a_1}^{b_1} \vert \xi_1 (y) \vert^2 dy = \Vert \xi_1 \Vert^2 ,
\end{aligned}
$$
and similarly $\Vert U^{-1} \xi_2 \Vert^2 = \Vert \xi_2 \Vert^2$.
\par

For
$\xi_1 \in L^2(\mathopen[ a_1, b_1 \mathclose], \lambda)$,
we have
$$
\begin{aligned}
(M_2 U \xi_1)(x)
&= \varphi_2 (x) \left( \sqrt{ \vert \varphi_2'(x) \vert } \hskip.1cm \xi_1 \big( \varphi_2(x) \big) \right)
= \sqrt{ \vert \varphi_2'(x) \vert } \Big( \varphi_2(x) \xi_1( \varphi_2(x) ) \Big)
\\
&= \sqrt{ \vert \varphi_2'(x) \vert } \hskip.1cm (M_1 \xi_1) (\varphi_2(x))
= (U M_1 \xi_1) (x) .
\end{aligned}
$$
It follows that $M_2 U = U M_1$, and this ends the proof.
\end{proof}

Here are two particular cases; this will be useful in the proof of Proposition~\ref{infline}.

\begin{exa}
\label{ExCosEqt}
(1)
Let $\mathopen[ a_1, b_1 \mathclose] = \mathopen[ a_2, b_2 \mathclose]
= \mathopen[ 0, 1 \mathclose]$
and $\varphi_2 (x) = x^\alpha$ for some $\alpha \in \R, \alpha > 0$.
The operator $M_1$ of multiplication by $x$
and the operator $M_\alpha$ of multiplication by $x^\alpha$
on $L^2(\mathopen[ 0, 1 \mathopen], \lambda)$ are unitarily equivalent.
The unitary operator $U$ on $L^2(\mathopen[ 0, 1 \mathopen], \lambda)$ is given by
$(U\xi) (x) = \sqrt{ \alpha x^{\alpha-1} } \xi(x^\alpha)$,
and $M_\alpha U = U M_1$.
\par

(2)
Let $\mathopen[ a_1, b_1 \mathclose] = \mathopen[ -2 , 2 \mathclose]$,
$\mathopen[ a_2, b_2 \mathclose] = \mathopen[ 0 , \pi \mathclose]$,
and $\varphi_2 (x) = 2 \cos (x)$.
The operator $M_1$ of multiplication by $x$
on $L^2( \mathopen[ -2, 2 \mathclose], \lambda)$
and the operator $M_{2 \cos}$ of multiplication by $2 \cos (x)$
on $L^2( \mathopen[ 0, \pi \mathclose], \lambda )$
are unitarily equivalent.
Similarly, the operator $M_1$ on $L^2( \mathopen[ -2, 2 \mathclose], \lambda)$
and the operator of multiplication by $2 \cos (x)$ on $L^2( \mathopen[ \pi, 2 \pi], \lambda)$
are unitarily equivalent.
\end{exa}

\subsection{The Hahn--Hellinger Multiplicity Theorem, and spectral multiplicity functions}
\label{SubsHH}

The following Theorem \ref{mainthHH} is the keystone of Hahn--Hellinger theory.

\begin{thm}
\label{mainthHH}
Any self-adjoint operator $X$ on a separable Hilbert space $\Hi$
is unitarily equivalent to
the straight multiplication operator $M_{\Sigma, \mu, \mathfrak m}$
of Definition~\ref{DefMO}
for the spectrum $\Sigma = \Sigma (X)$ of $X$,
a scalar-valued spectral measure $\mu$ for $X$,
and a measurable function $\mathfrak m \, \colon \Sigma \to \{1, 2, \hdots, \infty \}$.
\par

Moreover, if $\mu'$ is a measure on $\Sigma$
and $\mathfrak m' \, \colon \Sigma \to \{1, 2, \hdots, \infty \}$ a measurable function,
then $X$ is unitarily equivalent
to the straight multiplication operator $M_{\Sigma, \mu', \mathfrak m'}$
if and only if the measures $\mu, \mu'$ are equivalent,
and the functions $\mathfrak m, \mathfrak m'$ are equal $\mu$-almost everywhere.
\end{thm}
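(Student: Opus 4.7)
The plan is to establish existence by decomposing $\Hi$ into $X$-cyclic subspaces whose local spectral measures form a decreasing chain, applying the cyclic model theorem on each piece, and packaging the result as a single straight multiplication operator on $L^2(\Sigma, \mu, \mathfrak{m})$. Uniqueness I would derive from the fact that both the measure class $[\mu]$ and the a.e.\ class of $\mathfrak{m}$ are intrinsic unitary invariants of the operator.

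For existence, fix an orthonormal basis $(e_k)_{k \ge 1}$ of $\Hi$ and construct inductively closed $X$-invariant cyclic subspaces $\Hi_n \subset \Hi$ as follows. By Proposition~\ref{PropDominant}\ref{1DEPropDominant} applied to $X$ restricted to $\Hi'_{n-1} := \bigl(\bigoplus_{k < n} \Hi_k\bigr)^\perp$, choose a dominant vector $\xi_n \in \Hi'_{n-1}$ whose cyclic subspace $\Hi_n$ contains the first basis vector not already in $\bigoplus_{k < n} \Hi_k$; this both forces $\bigoplus_n \Hi_n = \Hi$ and, via the inclusion $\xi_{n+1} \in \Hi'_n \subset \Hi'_{n-1}$ together with dominance of $\xi_n$ on $\Hi'_{n-1}$, yields $\mu_1 \gg \mu_2 \gg \cdots$ for $\mu_n := \mu_{\xi_n}$. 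On each piece, the cyclic model $p(X)\xi_n \mapsto p$ extends (by $\langle p(X)\xi_n \sca q(X)\xi_n \rangle = \int p \bar q\,d\mu_n$ together with polynomial density) to a unitary $U_n \colon \Hi_n \to L^2(\Sigma(X), \mu_n)$ intertwining $X|_{\Hi_n}$ with multiplication by $x$. Set $\mu = \mu_1$, $\Sigma = \Sigma(X)$, $\rho_n = d\mu_n/d\mu$ (legitimate since $\mu_n \ll \mu$), and $\Sigma^{(n)} = \{ x \in \Sigma : \rho_n(x) > 0 \}$; the sets $\Sigma^{(n)}$ are decreasing, and $\xi \mapsto \sqrt{\rho_n}\,\xi$ is a unitary $L^2(\Sigma^{(n)}, \mu|_{\Sigma^{(n)}}) \to L^2(\Sigma, \mu_n)$ intertwining the two multiplications by $x$. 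Defining the Borel function $\mathfrak{m}(x) = \#\{ n \ge 1 : x \in \Sigma^{(n)} \} \in \overline{\N^*}$, the natural isomorphism $\bigoplus_n L^2(\Sigma^{(n)}, \mu|_{\Sigma^{(n)}}) \cong L^2(\Sigma, \mu, \mathfrak{m})$ carries the direct sum of the multiplications by $x$ to $M_{\Sigma, \mu, \mathfrak{m}}$, yielding the desired unitary equivalence.

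For uniqueness, assume $M_{\Sigma, \mu, \mathfrak{m}}$ and $M_{\Sigma, \mu', \mathfrak{m}'}$ are unitarily equivalent. By Proposition~\ref{PropMO}\ref{8DEPropMO}, both $\mu$ and $\mu'$ are scalar-valued spectral measures for the common underlying operator, hence mutually absolutely continuous; thus $[\mu] = [\mu']$. To detect $\mathfrak{m}$ intrinsically, I would observe that on the Borel set $\mathfrak{m}^{-1}(n)$ the spectral subspace $E_X(\mathfrak{m}^{-1}(n)) \Hi$ carries an operator of uniform multiplicity $n$: its commutant on that subspace is naturally $n \times n$ matrices over $L^\infty(\mathfrak{m}^{-1}(n), \mu)$. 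Since this is an intrinsic property of the restricted operator, confronting the decompositions $\Sigma = \bigsqcup_n \mathfrak{m}^{-1}(n) = \bigsqcup_n (\mathfrak{m}')^{-1}(n)$ forces $\mu \bigl( \mathfrak{m}^{-1}(n) \triangle (\mathfrak{m}')^{-1}(n) \bigr) = 0$ for every $n$, that is, $\mathfrak{m} = \mathfrak{m}'$ $\mu$-a.e.

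The main obstacle is the decreasing-chain step in the existence argument: producing cyclic subspaces $\Hi_n$ that simultaneously exhaust $\Hi$ and have local spectral measures forming a \emph{decreasing} chain. A naive inductive cyclic decomposition yields the former but not the latter; one must genuinely use Proposition~\ref{PropDominant}\ref{1DEPropDominant} at each stage together with the fact that the projection-valued spectral measure of $X|_{\Hi'_{n-1}}$ is the restriction of $E_X$ to $\Hi'_{n-1}$. A secondary subtle point is giving an intrinsic operator-theoretic reading of the pointwise value of $\mathfrak{m}$ in the uniqueness part; the commutant formulation above is compact but conceals the measurable-field-of-Hilbert-spaces machinery it rests on.
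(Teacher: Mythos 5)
The paper does not actually prove Theorem~\ref{mainthHH}: it states the Hahn--Hellinger theorem and refers the reader to the literature (Dunford--Schwartz, Dixmier, Arveson, Conway, Simon, et al.). Your proposal is a correct sketch of the standard proof found in those references, and both halves follow the classical route: existence via an exhausting orthogonal decomposition into cyclic subspaces with decreasing local spectral measures, reassembled through Radon--Nikodym derivatives into $L^2(\Sigma,\mu,\mathfrak m)$; uniqueness via the equivalence of scalar-valued spectral measures together with an intrinsic (commutant/type~I$_n$) characterization of the sets $\mathfrak m^{-1}(n)$. Two points are stated slightly loosely but are easily repaired: the cyclic subspace $\Hi_n$ lies in $\Hi'_{n-1}$ and so cannot literally contain the first uncaptured basis vector $e_{j(n)}$ --- it should be arranged to contain the orthogonal projection of $e_{j(n)}$ onto $\Hi'_{n-1}$, which still forces $e_{j(n)} \in \bigoplus_{k \le n} \Hi_k$; and the map $\xi \mapsto \sqrt{\rho_n}\,\xi$ is a unitary from $L^2(\Sigma,\mu_n)$ onto $L^2\bigl(\Sigma^{(n)},\mu\vert_{\Sigma^{(n)}}\bigr)$ rather than the other way around. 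Neither affects the argument, and you correctly identify the genuinely delicate step (obtaining a \emph{decreasing} chain of local measures, which requires Proposition~\ref{PropDominant}~\ref{1DEPropDominant} at each stage rather than a naive cyclic decomposition).
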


For a sample of other formulations of the theorem and for proofs, see
\cite[Theorem X.5.10]{DuSc--63},
\cite[Chap.~II, \S~6]{Dixm--69},
\cite[Section 2.2]{Arve--76},
\cite{Krie--86},
\cite[Theorem 10.16 and Theorem 10.20]{Conw--07},
\cite[Section~5.4]{Sim4--15}, and
\newline
\cite[Theorem 10.4.6]{BoSm--20}.

\begin{defn}
\label{DefMultF}
Let $\Hi$, $X$, $\Sigma$, $\mu$ and $\mathfrak m$ be as in the previous theorem.
The function $\mathfrak m$ is the \textbf{spectral multiplicity function} of $X$.
The operator $X$ is of \textbf{finite multiplicity}
if there exists a finite constant $N$ such that
$\mathfrak m (x) \le N$ for $\mu$-almost all $x \in \Sigma$.
The operator~$X$ is \textbf{multiplicity-free}, or simple,
if $\mathfrak m (x) = 1$ for $\mu$-almost all $x \in \Sigma$,
equivalently if it is unitarily equivalent
to the operator of multiplication by $x$
on the Hilbert space $L^2(\Sigma, \mu)$,
where $\mu$ is a scalar-valued spectral measure on the spectrum $\Sigma$ of $X$.
The operator $X$ is of \textbf{uniform multiplicity $n \in \overline{\N^*}$}
if $\mathfrak m(x) = n$ for $\mu$-almost all $x \in \Sigma (X)$,
equivalently if $X$ is unitarily equivalent to a direct sum
$X_1 \oplus \cdots \oplus X_n$ of
pairwise unitarily equivalent multiplicity-free self-adjoint operators $X_1, \hdots, X_n$.
\end{defn}

\begin{cor}[\textbf{reformulation of part of Theorem~\ref{mainthHH}}]
\label{CorRefUnitEq}
Let $X_1, X_2$ be two self-adjoint operators
on two Hilbert spaces $\Hi_1, \Hi_2$.
Suppose that $X_1$ and $ X_2$ have same spectrum,
equivalent scalar-valued spectral measures,
and spectral multiplicity functions which are equal almost everywhere;
in other words, suppose that $X_1$ and $X_2$ have the same marked spectrum.
\par

Then $X_1$ and $X_2$ are unitarily equivalent.
\end{cor}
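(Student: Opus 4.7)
The plan is to bootstrap directly off the existence and uniqueness halves of Theorem~\ref{mainthHH}. First I would apply the existence half to each operator separately: there exist scalar-valued spectral measures $\mu_1$ for $X_1$ and $\mu_2$ for $X_2$, together with measurable multiplicity functions $\mathfrak m_1 \colon \Sigma(X_1) \to \overline{\N^*}$ and $\mathfrak m_2 \colon \Sigma(X_2) \to \overline{\N^*}$, such that $X_1$ is unitarily equivalent to $M_{\Sigma(X_1), \mu_1, \mathfrak m_1}$ and $X_2$ is unitarily equivalent to $M_{\Sigma(X_2), \mu_2, \mathfrak m_2}$.

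Next, I would rewrite the hypotheses in the language of these data: the assumption $\Sigma(X_1) = \Sigma(X_2)$ lets me set $\Sigma := \Sigma(X_1) = \Sigma(X_2)$; the assumption that the scalar-valued spectral measures are equivalent means $\mu_1 \sim \mu_2$ on $\Sigma$; and the assumption that the spectral multiplicity functions agree almost everywhere means $\mathfrak m_1 = \mathfrak m_2$ off a common null set for $\mu_1$ and $\mu_2$ (the notion of ``null set'' is unambiguous here precisely because the measures are mutually absolutely continuous).

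With this in hand, the uniqueness (``moreover'') half of Theorem~\ref{mainthHH} applied to $X_1$ with the alternative data $(\mu_2, \mathfrak m_2)$ in place of $(\mu_1, \mathfrak m_1)$ yields a second unitary equivalence $X_1 \simeq M_{\Sigma, \mu_2, \mathfrak m_2}$. Since we already have $X_2 \simeq M_{\Sigma, \mu_2, \mathfrak m_2}$, composing the two unitaries (one from $\Hi_1$ to $L^2(\Sigma, \mu_2, \mathfrak m_2)$, the inverse of the other from $\Hi_2$ to the same space) produces a unitary $U \colon \Hi_1 \to \Hi_2$ intertwining $X_1$ and $X_2$, which is the conclusion.

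There is essentially no genuine obstacle: the statement is a repackaging of Hahn--Hellinger, and all the work has been done in Theorem~\ref{mainthHH}. The only minor point worth flagging is the coherence of ``almost everywhere'' under a change of representative from $\mu_1$ to $\mu_2$, but this is automatic because $\mu_1 \sim \mu_2$, so that the two measures share exactly the same null sets.
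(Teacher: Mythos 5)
Your proposal is correct and is exactly the argument the paper has in mind: the corollary is stated as a ``reformulation of part of Theorem~\ref{mainthHH}'' with no separate proof, the intended justification being precisely the combination of the existence and uniqueness halves of that theorem that you spell out. Your remark on the coherence of ``almost everywhere'' under the equivalence $\mu_1 \sim \mu_2$ is a sensible point to flag and is handled correctly.
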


\begin{prop}
\label{PropMultiplicityFree}
For a self-adjoint operator $X$ on a separable Hilbert space $\Hi$,
the following properties are equivalent:
\begin{enumerate}[label=(\roman*)]
\item\label{1DEPropMultiplicityFree}
$X$ is multiplicity-free,
\item\label{2DEPropMultiplicityFree}
$X$~has a cyclic vector.
\end{enumerate}
\end{prop}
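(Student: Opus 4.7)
The plan is to handle the two implications separately, using results already established earlier in the paper.

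For \ref{1DEPropMultiplicityFree}~$\Rightarrow$~\ref{2DEPropMultiplicityFree}: by Definition~\ref{DefMultF}, if $X$ is multiplicity-free then there is a unitary $U \colon L^2(\Sigma, \mu) \to \Hi$ sending the straight multiplication operator $M_{\Sigma, \mu, \mathbf 1_\Sigma}$ to $X$, where $\Sigma = \Sigma(X)$ and $\mu$ is a scalar-valued spectral measure for $X$. The last item of Proposition~\ref{PropMO} exhibits the constant function $\mathbf 1 \in L^2(\Sigma, \mu)$ as cyclic for $M_{\Sigma, \mu, \mathbf 1_\Sigma}$, hence $U \mathbf 1$ is a cyclic vector for $X$.

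For \ref{2DEPropMultiplicityFree}~$\Rightarrow$~\ref{1DEPropMultiplicityFree}, let $\xi \in \Hi$ be cyclic for $X$ and let $\mu_\xi$ be the local spectral measure of $X$ at $\xi$ on $\Sigma = \Sigma(X)$. By Proposition~\ref{PropDominant}\ref{4DEPropDominant}, $\xi$ is dominant, so $\mu_\xi$ is a scalar-valued spectral measure. I would then realize the unitary equivalence of $X$ with $M_{\Sigma, \mu_\xi, \mathbf 1_\Sigma}$ directly. For a polynomial $p \in \mathcal P(\Sigma)$, set $V(p) = p(X) \xi \in \Hi$; the definition of the local spectral measure gives
$$
\Vert V(p) \Vert_\Hi^2 = \langle p(X)^* p(X) \xi \sca \xi \rangle = \int_\Sigma \vert p(x) \vert^2 d\mu_\xi(x) = \Vert p \Vert_{L^2(\Sigma, \mu_\xi)}^2 ,
$$
so $V$ is isometric on $\mathcal P(\Sigma)$. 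Since $\Sigma$ is compact, $\mathcal P(\Sigma)$ is norm-dense in $L^2(\Sigma, \mu_\xi)$ (by Stone--Weierstrass, followed by density of $\mathcal C(\Sigma)$ in $L^2$ of a finite regular Borel measure), and $\{X^n \xi\}_{n \in \N} = \{V(x^n)\}_{n \in \N}$ spans a dense subspace of $\Hi$ by cyclicity. Thus $V$ extends to a surjective isometry $V \colon L^2(\Sigma, \mu_\xi) \to \Hi$. By construction $XV = VM_{\Sigma, \mu_\xi, \mathbf 1_\Sigma}$, so $X$ is multiplicity-free in the sense of Definition~\ref{DefMultF}.

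The only step that requires any care is the displayed isometry identity, which is really just Borel functional calculus combined with the very definition of $\mu_\xi$, and presents no real obstacle. As a more compact alternative, one could finish the converse by appealing directly to Theorem~\ref{mainthHH}: once $\mu_\xi$ is recognized as a scalar-valued spectral measure, $X$ is unitarily equivalent to $M_{\Sigma, \mu_\xi, \mathfrak m}$ for some multiplicity function $\mathfrak m$, and the existence of a cyclic vector then forces $\mathfrak m = 1$ $\mu_\xi$-almost everywhere, because on any fibre of $\mathfrak m$ of value $\ge 2$ there are non-scalar bounded operators commuting with multiplication by $x$, obstructing cyclicity.
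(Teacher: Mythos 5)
Your proposal is correct. The forward implication \ref{1DEPropMultiplicityFree}~$\Rightarrow$~\ref{2DEPropMultiplicityFree} is identical to the paper's: realize $X$ as $M_{\Sigma,\mu,\mathbf 1_\Sigma}$ via Theorem~\ref{mainthHH} and transport the cyclic vector furnished by Proposition~\ref{PropMO}~\ref{10DEPropMO}. The difference lies in the converse: the paper simply cites \cite[Theorem 5.1.7]{Sim4--15} (the cyclic-vector form of the spectral theorem), whereas you prove that statement from scratch by the standard construction --- the map $p \mapsto p(X)\xi$ is isometric from $\mathcal P(\Sigma) \subset L^2(\Sigma,\mu_\xi)$ into $\Hi$ because $\Vert p(X)\xi \Vert^2 = \int_\Sigma \vert p \vert^2 d\mu_\xi$, it has dense range by cyclicity, and it intertwines multiplication by $x$ with $X$. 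This is a genuine gain in self-containedness at essentially no cost: all ingredients (Proposition~\ref{PropDominant}~\ref{4DEPropDominant} to see that $\mu_\xi$ is a scalar-valued spectral measure, Stone--Weierstrass, density of $\mathcal C(\Sigma)$ in $L^2$ of a finite Borel measure on a compact metric space) are already in place in Section~\ref{SectionHH}, and your identification of the outcome with the second formulation of ``multiplicity-free'' in Definition~\ref{DefMultF} is exactly right. One caveat on your ``more compact alternative'': the mere existence of non-scalar bounded operators commuting with $M_{\Sigma,\mu_\xi,\mathfrak m}$ does not by itself obstruct cyclicity (multiplication by $x$ on $L^2(\mathopen[0,1\mathclose],\lambda)$ has the whole of $L^\infty$ in its commutant yet has cyclic vectors); the correct mechanism is that a cyclic vector for $X$ is separating for the commutant of the von Neumann algebra generated by $X$, and on a fibre where $\mathfrak m \ge 2$ one can build a nonzero fibrewise matrix-valued operator in that commutant annihilating any prescribed vector. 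Since that sketch is only offered as an aside, it does not affect the validity of your main argument.
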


\begin{proof}[Reference for a proof, and comments]
Suppose that $X$ satisfies Condition~\ref{1DEPropMultiplicityFree}.
Let $M_{\Sigma, \mu, \mathfrak m} = M_{\Sigma, \mu, \mathbf 1_\Sigma}$
be as in Theorem~\ref{mainthHH}.
Then $M_{\Sigma, \mu, \mathbf 1_\Sigma}$
has a cyclic vector by~\ref{10DEPropMO} of Proposition~\ref{PropMO},
hence $X$ has a cyclic vector.
\par

The converse implication~\ref{2DEPropMultiplicityFree}
$\Rightarrow$ \ref{1DEPropMultiplicityFree}
can be seen as one form of the spectral theorem;
we refer to \cite[Theorem 5.1.7]{Sim4--15}.
\end{proof}

The next proposition is a complement to Proposition~\ref{PropMO}
for the spectral multiplicity function, in the simple case
of $\mathfrak m = \mathbf 1_\Sigma$.
We use it below in the proof of Proposition~\ref{lattices}.
For the proof, we refer to~\cite[Theorem 5]{Abra--78}.

\begin{prop}
\label{PropMOmult}
Let $\Sigma$ be a non-empty metrizable compact space
and $\mu$ a finite positive measure on $\Sigma$;
assume that the closed support of $\mu$ is the whole of~$\Sigma$.
Let $\varphi$ be a \emph{continuous} real-valued function on $\Sigma$,
viewed as $\varphi \in L^\infty(\Sigma, \mu)$.
Let $M = M_{\Sigma, \mu, \mathbf 1_\Sigma, \varphi}$
be the multiplication operator by $\varphi$
on $L^2(\Sigma, \mu)$, as in Definition~\ref{DefMO};
recall from Proposition~\ref{PropMO} that the spectrum of $M$
is the essential range $R_\varphi$.
\par

Then the spectral multiplicity function
$\mathfrak m$ for $M_{\Sigma, \mu, \mathbf 1_\Sigma, \varphi}$
satisfies
$$
\mathfrak m (x) = \sharp \left( \varphi_\mu^{-1} (x) \right)
$$
for $\mu$-almost all $x \in \Sigma (M) = R_\varphi$.
\end{prop}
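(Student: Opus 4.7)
The plan is to decompose $L^2(\Sigma,\mu)$ as a direct integral over $R_\varphi$ via the disintegration of $\mu$ along $\varphi$, under which $M$ becomes a straight multiplication operator; the multiplicity then reads off as the fiber dimension, which must finally be identified with $\sharp(\varphi_\mu^{-1}(\cdot))$.

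Concretely, set $\nu = \varphi_*\mu$ on $R_\varphi$; its closed support is all of $R_\varphi$, and $\nu$ represents the scalar-valued spectral measure of $M$ on the spectrum. Since $\Sigma$ is a compact metric space, the disintegration theorem yields a $\nu$-measurable family of probability measures $\{\mu_y\}_{y\in R_\varphi}$ with $\mathrm{supp}(\mu_y)\subset\varphi^{-1}(y)$ and $\mu=\int_{R_\varphi}\mu_y\, d\nu(y)$. This induces a unitary identification
$$
L^2(\Sigma,\mu)\;\cong\;\int^{\oplus}_{R_\varphi} L^2\bigl(\varphi^{-1}(y),\mu_y\bigr)\, d\nu(y)
$$
that conjugates $M$ to the fiberwise scalar $y\cdot\mathrm{Id}$. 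Comparing with Theorem~\ref{mainthHH}, this exhibits $M$ as unitarily equivalent to the straight multiplication operator $M_{R_\varphi,\nu,\mathfrak{m}}$ with $\mathfrak{m}(y)=\dim L^2(\varphi^{-1}(y),\mu_y)$, so this $\mathfrak{m}$ is, up to $\nu$-null sets, the spectral multiplicity function of $M$.

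It remains to show $\dim L^2(\varphi^{-1}(y),\mu_y)=\sharp\bigl(\varphi_\mu^{-1}(y)\bigr)$ for $\nu$-almost every $y$. The dimension of $L^2$ of a finite Borel measure on a second countable space equals $\sharp(\mathrm{supp}(\cdot))$ in $\overline{\N^*}$ (finitely many atoms give a finite-dimensional space; any diffuse part or infinitely many atoms give $\aleph_0$), so it suffices to prove $\varphi_\mu^{-1}(y)=\mathrm{supp}(\mu_y)$ $\nu$-a.e. For fixed $x\in\Sigma$ and a neighborhood $V$ of $x$, the disintegration gives
$$
\frac{\mu\bigl(V\cap\varphi^{-1}(D_\varepsilon(y))\bigr)}{\mu\bigl(\varphi^{-1}(D_\varepsilon(y))\bigr)}\;=\;\frac{\int_{D_\varepsilon(y)}\mu_{y'}(V)\, d\nu(y')}{\nu(D_\varepsilon(y))},
$$
and by Lebesgue differentiation applied to the finite Radon measure $\nu$ on $\R$, the right-hand side tends to $\mu_y(V)$ for $\nu$-a.e.\ $y$. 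Choosing a countable basis $\{V_n\}$ of open sets of $\Sigma$ and removing a countable union of $\nu$-null sets yields, simultaneously for $\nu$-a.e.\ $y$ and every $n$, that the liminf in the definition of $\varphi_\mu^{-1}(y)$ equals $\mu_y(V_n)$. Hence for such $y$ the condition $x\in\varphi_\mu^{-1}(y)$ is equivalent to $\mu_y(V)>0$ for every neighborhood $V$ of $x$, i.e., $x\in\mathrm{supp}(\mu_y)$.

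The main obstacle lies in this second half: one must run the differentiation argument with enough uniformity that a single $\nu$-null exceptional set suffices for all $x\in\Sigma$, which forces the countable-base reduction. The continuity hypothesis on $\varphi$ enters here in two ways: it ensures that $\varphi^{-1}(D_\varepsilon(y))$ is open (so the numerator behaves well under the disintegration) and it guarantees the inclusion $\varphi_\mu^{-1}(y)\subset\varphi^{-1}(y)$ of \cite[Theorem 6]{AbKr--73}, without which the identification with $\mathrm{supp}(\mu_y)$ could fail. The first half is by contrast essentially automatic once the disintegration is in hand, and Theorem~\ref{mainthHH} furnishes the translation from fiber dimensions to the spectral multiplicity function.
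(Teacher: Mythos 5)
Your argument is correct, but the comparison here is somewhat one-sided: the paper does not prove Proposition~\ref{PropMOmult} at all, it simply refers to \cite[Theorem 5]{Abra--78}. What you have written out is, in substance, the argument behind that reference and behind \cite{AbKr--73}: disintegrate $\mu$ over $\nu=\varphi_*\mu$, realize $M$ as the diagonal operator on the direct integral with fibres $L^2(\varphi^{-1}(y),\mu_y)$, read off the multiplicity as the fibre dimension via Theorem~\ref{mainthHH}, and identify the support of $\mu_y$ with the essential pre-image by differentiation of $\nu$ along a countable base of $\Sigma$. The steps all hold up: the constant function $1$ is dominant for $M$ (if $f(M)\mathbf 1=f\circ\varphi=0$ $\mu$-a.e.\ then $f(M)=0$), so $\nu$ is indeed a scalar-valued spectral measure on $\Sigma(M)=R_\varphi$; the equality $\dim L^2(\varphi^{-1}(y),\mu_y)=\sharp\,{\rm supp}(\mu_y)$ is valid in $\overline{\N^*}$ because both sides equal $\infty$ as soon as $\mu_y$ has infinitely many atoms or a diffuse part; and the countable-base reduction is exactly what is needed to get a single $\nu$-null exceptional set valid for all $x$ simultaneously, with the Besicovitch differentiation theorem for the Radon measure $\nu$ on $\R$ supplying the a.e.\ limit. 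Two minor points you should tidy up: in the paper's convention $D_\varepsilon(y)$ is a \emph{closed} disc, so $\varphi^{-1}(D_\varepsilon(y))$ is closed rather than open (only Borel measurability is needed for the disintegration identity, so nothing breaks); and the passage from the direct integral $\int^{\oplus}L^2(\varphi^{-1}(y),\mu_y)\,d\nu(y)$ to the standard form $M_{R_\varphi,\nu,\mathfrak m}$ of Definition~\ref{DefMO} uses the fact that a measurable field of separable Hilbert spaces is classified by its dimension function, which deserves an explicit mention. What your route buys is a self-contained proof; what the paper's citation buys is brevity, at the cost of sending the reader to \cite{Abra--78}.
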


For infinite connected graphs,
spectral multiplicity functions of adjacency operators
have not been much studied. 
It would be interesting (at least for us!) to understand which of these graphs
have multiplicity-free adjacency operators.
\par

In contrast, many results have been shown
concerning finite graphs and adjacency operators with simple eigenvalues;
we quote a few.
\par

All eigenvalues are simple for finite paths,
for all trees with at most $10$ vertices
(see the tables of~\cite{CvDS--80}),
for all finite connected graphs $G$ with $\Vert A_G \Vert \le 2$.
\par

If $G$ is a finite graph such that all eigenvalues of $A_G$ are simple,
any automorphism of $G$ is of order $2$;
more precisely, the automorphism group of $G$
is an elementary abelian $2$-group
(\cite{Mows--69}, see also \cite[Corollary 1.6.1]{BrHa--12}).
\par

Let $G = (V, E)$ be a finite graph with $n = \vert V \vert$ vertices
and $A_G$ its adjacency matrix.
We denote by $\mathbf 1_V$ the vector in $\ell^2(V)$
defined by $\mathbf 1_V (v) = 1$ for all $v \in V$.
Say $G$ is \textbf{controllable} if $\mathbf 1_V$ is a cyclic vector for $A_G$.
It is conjectured in \cite{Gods--12} and proved in \cite{ORTo--16} that
\emph{almost all finite graphs are controllable},
and therefore multiplicity-free.
This is made precise as follows.
Consider a positive integer $n$ and a probability $p \in \mathopen] 0, 1 \mathclose[$.
Let $\mathcal{G} (n, p)$ be the set of all graphs with vertex set $\{1, 2, \hdots, n\}$
having $\lfloor p \binom{n}{2} \rfloor$ edges.
Let $\mathcal{MFG} (n, p)$ be the subset of $\mathcal{G} (n, p)$
of multiplicity-free graphs.
We denote by $\sharp S$ the cardinality of a set $S$.
Then
$$
\lim_{n \to \infty} \sharp \mathcal{MFG} (n, p) / \sharp \mathcal{G} (n, p) 
= 1 .
$$

\section{\textbf{The infinite ray, the infinite line, and the lattices}}
\label{SectionRayLL}

Let again $G = (V, E)$ be a graph,
$(\delta_v)_{v \in V}$ the standard orthonormal basis
of the Hilbert space $\ell^2(V)$,
and $A_G$ its adjacency operator on $\ell^2(V)$.
A vertex $v \in V$ is \textbf{dominant}
if the vector $\delta_v$ is dominant for $A_G$,
and $v$ is \textbf{cyclic} if the vector $\delta_v$ is cyclic for $A_G$.
The \textbf{vertex spectral measure at $v \in V$}
is the local spectral measure at $\delta_v$
on the spectrum $\Sigma (A_G)$ of $A_G$.

\vskip.2cm

The \textbf{infinite ray} is the graph $R$ with vertex set $\N = \{0, 1, 2, 3, \hdots\}$
and edge set $E = \{ \{j, j+1\} : j \in \N \}$.
The adjacency operator $A_R$ of $R$ is defined by
$$
(A_R \xi)(u)
= \xi(u-1) + \xi(u+1)
\hskip.5cm \text{for all} \hskip.2cm
\xi \in \ell^2(\N) 
\hskip.2cm \text{and} \hskip.2cm
u \in \N ,
$$
where $\xi(-1)$ should be read as $0$.
With respect to the standard basis $\left( \delta_n \right)_{n \in \N}$
of the Hilbert space $\ell^2(\N)$~
the adjacency operator $A_R$ is the \textbf{free Jacobi matrix}~$J$:
$$
A_R = J
= \begin{pmatrix}
0 & 1 & 0 & 0  & \cdots
\\
1 & 0 & 1 & 0  & \cdots
\\
0 & 1 & 0 & 1  & \cdots
\\
0 & 0 & 1 & 0  & \cdots
\\
\vdots & \vdots & \vdots & \vdots & \ddots
\end{pmatrix} 
$$
($J_{m,n} = 1$ if $\vert m-n \vert = 1$
and $J_{m,n} = 0$ otherwise).
The following proposition collects standard results on $J$.

\begin{prop}
\label{infray}
Let $R$ be the infinite ray and let $A_R = J$ be its adjacency operator.
\begin{enumerate}[label=(\arabic*)]
\item\label{1DEinfray}
The norm of $A_R$ is $2$.
\item\label{2DEinfray}
The spectrum of $A_R$ is
$\mathopen[ -2, 2 \mathclose]$,
and $A_R$ does not have any eigenvalue.
\item\label{3DEinfray}
The vertex spectral measure of $A_R$ at $0$ is given by
$d \mu (x) = \frac{1}{2\pi} \sqrt{ 4 - x^2} dx$
for $x \in \mathopen[ -2, 2 \mathclose]$;
it is a scalar-valued spectral measure for $A_R$.
\item\label{4DEinfray}
The vertex $0$ is cyclic in $R$
and the operator $A_R$ is multiplicity-free.
\end{enumerate}
\end{prop}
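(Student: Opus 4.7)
The plan is to exhibit an explicit unitary equivalence between $A_R = J$ and the straight multiplication operator $M$ on $L^2(\mathopen[ -2, 2 \mathclose],\mu)$ given by $(M\xi)(x) = x\xi(x)$, where $d\mu(x) = \frac{1}{2\pi}\sqrt{4-x^2}\,dx$, and then to read off all four claims from Propositions~\ref{PropMO} and~\ref{PropMultiplicityFree}. The intertwiner will be built from the sequence of (rescaled) Chebyshev polynomials of the second kind $(P_n)_{n \in \N}$ defined by $P_{-1} = 0$, $P_0 = 1$, and the three-term recurrence $xP_n(x) = P_{n+1}(x) + P_{n-1}(x)$, which is precisely the recurrence encoded by the rows of $J$; equivalently, $P_n(2\cos\theta) = \sin((n+1)\theta)/\sin\theta$ for $\theta \in \mathopen] 0, \pi \mathclose[$.

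First I would show that $(P_n)_{n \in \N}$ is an orthonormal basis of $L^2(\mathopen[ -2, 2 \mathclose],\mu)$. Orthonormality is a one-line computation after the change of variable $x = 2\cos\theta$, which turns $\mu$ into $\frac{2}{\pi}\sin^2\theta\, d\theta$ on $\mathopen[ 0, \pi \mathclose]$ and reduces matters to the classical identity $\int_0^\pi \sin((n+1)\theta)\sin((m+1)\theta)\,d\theta = (\pi/2)\delta_{nm}$. Completeness then follows because polynomials are dense in $\mathcal C(\mathopen[ -2, 2 \mathclose])$ by Stone--Weierstrass and $\mathcal C(\mathopen[ -2, 2 \mathclose])$ is dense in $L^2(\mathopen[ -2, 2 \mathclose],\mu)$ since $\mu$ is a finite measure on a compact space. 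Thus $U \, \colon \delta_n \mapsto P_n$ extends to a unitary $\ell^2(\N) \to L^2(\mathopen[ -2, 2 \mathclose], \mu)$, and the intertwining $UA_R = MU$ is immediate on basis vectors, directly from the recurrence.

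With this unitary in hand, claims~\ref{1DEinfray} and~\ref{2DEinfray} follow at once from Proposition~\ref{PropMO}: the norm is $\sup \{ \vert x \vert : x \in \mathopen[ -2, 2 \mathclose] \} = 2$, the spectrum is the closed support of $\mu$, namely $\mathopen[ -2, 2 \mathclose]$, and the absence of eigenvalues comes from $\mu$ being absolutely continuous, so $\mu(\{\lambda\}) = 0$ for every $\lambda$. For claim~\ref{3DEinfray}, the vertex spectral measure of $A_R$ at $0$ is carried by $U$ to the local spectral measure of $M$ at $U\delta_0 = P_0 = 1$, whose $n$-th moment equals $\int x^n \, d\mu$, so the two measures coincide; this is then a scalar-valued spectral measure for $A_R$ by Proposition~\ref{PropDominant}. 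For claim~\ref{4DEinfray}, the last item of Proposition~\ref{PropMO} says that the constant function $1$ is cyclic for $M$, hence $\delta_0$ is cyclic for $A_R$; then Proposition~\ref{PropMultiplicityFree} yields that $A_R$ is multiplicity-free. I expect no serious obstacle: the content is classical and its essence is the identification of $A_R$ with the Jacobi matrix of the Chebyshev polynomials of the second kind; the only mildly delicate point is the completeness of $(P_n)$ in $L^2(\mu)$, which Stone--Weierstrass handles uniformly.
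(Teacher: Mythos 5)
Your proposal is correct and follows essentially the same route as the paper: identify $A_R=J$ with multiplication by $x$ on $L^2(\mathopen[ -2,2 \mathclose],\mu)$ via the orthonormal basis of rescaled Chebyshev polynomials of the second kind, then read off all four claims from Proposition~\ref{PropMO}. Your explicit remark on completeness of $(P_n)$ via Stone--Weierstrass is a point the paper leaves implicit, but it does not change the argument.
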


(It is known that all vertices of $R$ are cyclic \cite[Proposition 7.1]{BrHN}.)

\begin{proof}
The strategy of the proof is to view $J$
as the matrix of an operator of multiplication by $x$
on a Hilbert space of functions on $\mathopen[ -2, 2 \mathclose]$
with respect to an appropriate basis of orthogonal polynomials.
For some background on orthogonal polynomials
and their relations with Jacobi matrices,
see \cite[Section 4.1]{Sim4--15}.
\par

Consider the sequence $\left( P_n \right)_{n=0}^\infty$ of functions
defined on the interval $\mathopen[ -2, 2 \mathclose]$ of the real line by
$$
P_n( 2 \cos \theta ) = \frac{ \sin ( (n+1)\theta ) }{ \sin (\theta) } 
$$
for $\theta \in \mathopen[ 0, \pi \mathclose]$.
Note that $P_0 (x) = 1$, $P_1(x) = x$, $P_2(x) = x^2-1$,
for all $x \in \mathopen[ -2, 2 \mathclose]$.
Define $P_{-1}$ to be the zero function.
From the trigonometric formula
$$
2 \cos (\theta) \sin( n\theta ) = \sin( (n-1)\theta ) + \sin( (n+1)\theta ) ,
$$
it follows that
\begin{equation}
\label{eqreqPn}
x P_{n-1} (x) = P_{n-2} (x) + P_n (x)
\hskip.5cm \text{for all} \hskip.2cm
n \ge 1.
\end{equation}
This implies, by induction on $n$, that $P_n$ is a polynomial,
of the form $P_n(x) = x^n + (\text{lower order terms})$ for all $n \ge 0$.
\par

The $P_n$~'s are Chebychev polynomials, up to a scale change.
More precisely, if $U_n(x)$ denotes the Chebychev polynomial of the second kind of degree~$n$,
defined by $U_n (\cos \theta) = \sin ((n+1)\theta) / \sin ( \theta )$,
then $P_n (x) = U_n( x/2)$.
\par

Define a probability measure $\mu$ on $\mathopen[ -2, 2 \mathclose]$ by
$$
d \mu (x) = \frac{1}{2\pi} \sqrt{ 4 - x^2} \hskip.1cm dx
\hskip.5cm \text{for} \hskip.2cm
x \in \mathopen[ -2, 2 \mathclose].
$$
Let $m, n \ge 0$;
using the change of variables $x = 2 \cos (\theta)$, we compute
$$
\begin{aligned}
&
\int_{-2}^2 P_m (x) P_n (x) d\mu(x)
= \frac{1}{2\pi} \int_{-2}^2 P_m (x) \sqrt{ 4 - x^2}
\hskip.1cm P_n (x) \sqrt{ 4 - x^2} \hskip.1cm \frac{dx}{ \sqrt{ 4 - x^2} }
\\
& \hskip1cm =
\frac{1}{2\pi} \int_0^\pi P_m(2 \cos (\theta)) 2 \sin (\theta)
\hskip.1cm P_n(2 \cos (\theta)) 2 \sin (\theta) \hskip.1cm d\theta
\\
& \hskip1cm =
\frac{2}{\pi} \int_0^\pi \sin ((m+1)\theta) \hskip.1cm \sin ((n+1)\theta) \hskip.1cm d\theta
\\
& \hskip1cm =
\frac{1}{\pi} \int_0^\pi \Big[
\cos \big[ (m+1)\theta - (n+1)\theta \big] - \cos \big[ (m+1)\theta) + (n+1)\theta \big]
\Big] d\theta
\\
& \hskip1cm =
\frac{1}{\pi} \int_0^\pi \Big[
\cos \big[ (m-n)\theta \big] - \cos \big[ (m+n+2)\theta \big]
\Big] d\theta
\\
& \hskip1cm =
0 \hskip.2cm \text{if} \hskip.2cm m \ne n
\hskip.2cm \text{and} \hskip.2cm
1 \hskip.2cm \text{if} \hskip.2cm m = n .
\end{aligned}
$$
It follows that $(P_n)_{n \ge 0}$
is an orthonormal basis of $L^2( \mathopen[ -2, 2 \mathclose], \mu )$.
If $M_\mu$ denotes the operator of multiplication by $x$ on this space,
we have by Equation~\eqref{eqreqPn} above
\begin{equation}
M_\mu P_n = P_{n-1} + P_{n+1}
\hskip.5cm \text{for all} \hskip.2cm
n \ge 0 ,
\end{equation}
where $P_{-1}$ should be read as $0$.
\par

This shows that $J$ is the matrix of $M_\mu$
with respect to the basis $(P_n)_{n \ge 0}$.
The claims of Proposition~\ref{infray} follow therefore
from the corresponding facts of Proposition~\ref{PropMO}.
\end{proof}

The \textbf{line} is the graph $L$ with vertex set $\Z = \{ \hdots , -1, 0, 1, \hdots\}$
and edge set $E = \{ \{j, j+1\} : j \in \Z \}$.
The line can be seen as the Cayley graph of the infinite cyclic group $\Z$
generated by $\{ 1, -1\}$.
The adjacency operator $A_L$ of $L$ is defined by
$$
(A_L \xi)(u)
= \xi(u-1) + \xi(u+1)
\hskip.5cm \text{for all} \hskip.2cm
\xi \in \ell^2(\Z) 
\hskip.2cm \text{and} \hskip.2cm
u \in \Z .
$$
The following proposition can be viewed as an exercise in Fourier series.

\begin{prop}
\label{infline}
Let $L$ be the infinite line and let $A_L$ be its adjacency operator.
\begin{enumerate}[label=(\arabic*)]
\item\label{1DEinfline}
The norm of $A_L$ is $2$.
\item\label{2DEinfline}
The spectrum of $A_L$ is
$\mathopen[ -2, 2 \mathclose]$.
\item\label{3DEinfline}
For all $j \in \Z$, 
the vertex spectral measure $\mu_j$ of $A_L$ at $j$ is given by its density
with respect to the Lebesgue measure:
$$
d \mu_j (x) = \frac{1}{\pi \sqrt{ 4 - x^2} } dx
\hskip.5cm \text{for} \hskip.2cm
x \in \mathopen[ -2, 2 \mathclose] .
$$
The measure $\mu_j$ is independent of $j$,
it is a scalar-valued spectral measure for $A_L$,
and the vertex $j$ is dominant.
\item\label{4DEinfline}
The operator $A_L$ is of uniform multiplicity $2$.
\end{enumerate}
\end{prop}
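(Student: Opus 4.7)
I would diagonalize $A_L$ via Fourier analysis on $\Z$. Let $F \colon \ell^2(\Z) \to L^2(\mathopen[0, 2\pi\mathclose], d\theta/(2\pi))$ be the unitary isomorphism defined by $F(\delta_j)(\theta) = e^{i j \theta}$. Using $A_L \delta_j = \delta_{j-1} + \delta_{j+1}$ one checks that $FA_L F^{-1}$ is the multiplication operator $M_\varphi$ with $\varphi(\theta) = e^{-i\theta} + e^{i\theta} = 2 \cos \theta$. Since $\varphi$ is continuous and real-valued with $\Vert \varphi \Vert_\infty = 2$ and essential range $\mathopen[-2, 2\mathclose]$, Proposition~\ref{PropMO}\ref{1DEPropMO}--\ref{2DEPropMO} yields claims~\ref{1DEinfline} and~\ref{2DEinfline}.

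For claim~\ref{3DEinfline}, the image $F \delta_j = e^{ij\theta}$ has modulus $1$, so by Proposition~\ref{PropMO}\ref{3DEPropMO} the local spectral measure $\mu_j$ is the pushforward of $d\theta/(2\pi)$ under $\varphi$, independently of $j$. Since $\varphi$ is a $\mathcal{C}^1$ diffeomorphism of each of $\mathopen[0, \pi\mathclose]$ and $\mathopen[\pi, 2\pi\mathclose]$ onto $\mathopen[-2, 2\mathclose]$, the substitution $x = 2 \cos \theta$, together with $\sqrt{4-x^2} = 2 \vert \sin \theta \vert$, gives
$$
\int_0^{2\pi} f(2 \cos \theta) \frac{d\theta}{2\pi} = \frac{1}{\pi} \int_{-2}^{2} \frac{f(x)}{\sqrt{4 - x^2}} \, dx ,
$$
which is the claimed density for $\mu_j$. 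Since all the $\mu_k$ coincide with each other, $\mu_j$ dominates every vertex measure, and Proposition~\ref{PropDominant}\ref{6DEPropDominant} shows that $\delta_j$ is a dominant vector; consequently $\mu_j$ is a scalar-valued spectral measure for $A_L$.

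For claim~\ref{4DEinfline}, I would apply Proposition~\ref{PropMOmult} to the continuous function $\varphi = 2 \cos \theta$: for every $x$ in the interior of $\mathopen[-2, 2\mathclose]$, the fiber $\varphi^{-1}(x)$ consists of exactly two points of $\mathopen[0, 2\pi\mathclose]$, and because $\varphi'$ is non-zero at each of them $\varphi$ is a local diffeomorphism near both, so both contribute to the essential pre-image $\varphi^{-1}_\mu(x)$. Proposition~\ref{PropMOmult} then gives $\mathfrak{m}_L(x) = \sharp \varphi^{-1}_\mu(x) = 2$ for almost every $x$ in the spectrum, i.e., uniform multiplicity~$2$. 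The step requiring the most care is the verification that both branches contribute positively to the essential pre-image; an alternative that avoids it is to split $L^2(\mathopen[0, 2\pi\mathclose], d\theta/(2\pi))$ into the $M_\varphi$-invariant summands $L^2(\mathopen[0, \pi\mathclose], \cdot)$ and $L^2(\mathopen[\pi, 2\pi\mathclose], \cdot)$, and to invoke Example~\ref{ExCosEqt}(2) on each half (up to a harmless constant rescaling of the measure) to realise $A_L$ as a direct sum of two unitarily equivalent multiplicity-free operators, each of uniform multiplicity one by Propositions~\ref{PropMO}\ref{10DEPropMO} and~\ref{PropMultiplicityFree}.
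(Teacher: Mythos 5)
Your proof is correct, and its skeleton is the same as the paper's: Fourier transform $\ell^2(\Z)$ to realise $A_L$ as multiplication by $2\cos$ on $L^2$ of the circle (the paper's Lemma~\ref{Leminfline}), read off the norm and spectrum from Proposition~\ref{PropMO}, get dominance of $\delta_j$ from the equality of all the $\mu_k$ via Proposition~\ref{PropDominant}~\ref{6DEPropDominant}, and obtain multiplicity two either from Proposition~\ref{PropMOmult} or by splitting into the two invariant halves $L^2(\mathopen[0,\pi\mathclose])$ and $L^2(\mathopen[\pi,2\pi\mathclose])$ and invoking Example~\ref{ExCosEqt}(2) --- the paper explicitly mentions both of these options for claim~\ref{4DEinfline} and, like you, prefers the splitting. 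The one genuinely different step is your treatment of claim~\ref{3DEinfline}: you identify $\mu_j$ directly as the pushforward of normalized Haar measure under $2\cos$ (using $\vert F\delta_j \vert \equiv 1$ and Proposition~\ref{PropMO}~\ref{3DEPropMO}) and compute the density by the substitution $x = 2\cos\theta$ on each monotone branch, whereas the paper computes the moments $\langle A_L^n \delta_j \sca \delta_j\rangle$ by counting closed paths in $L$ (getting $\binom{2m}{m}$ for $n=2m$) and matches them against the moments of $\frac{dx}{\pi\sqrt{4-x^2}}$, concluding by determinacy of compactly supported measures. Your route is shorter and stays entirely on the Fourier side; the paper's moment computation is more combinatorial and does not require identifying the pushforward explicitly, but needs the (standard) uniqueness of a compactly supported measure with given moments. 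A small point in your favour: your normalization $d\theta/(2\pi)$ is the one that actually makes the Fourier map unitary with $F\delta_j$ of norm one, and the ``harmless constant rescaling'' you mention when invoking Example~\ref{ExCosEqt}(2) is indeed harmless, since scaling a measure by a positive constant yields a unitarily equivalent multiplication operator.
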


\begin{lem}
\label{Leminfline}
The adjacency operator $A_L$ of the line $L$
is unitarily equivalent to the operator
$M_{\mathopen[0, 2\pi \mathclose], \lambda, 2 \cos}$
of multiplication by the function $2 \cos$
on the Hilbert space $L^2 (\mathopen[0, 2\pi \mathclose], \lambda)$,
where $\lambda$ stands for the Lebesgue measure on the interval.
\end{lem}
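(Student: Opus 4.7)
The plan is to diagonalize $A_L$ by Fourier series, which simultaneously identifies $\ell^2(\Z)$ with a standard $L^2$ space and turns the shift operator into multiplication by $e^{i\theta}$.

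Concretely, I would define a candidate unitary
$$
U \colon \ell^2(\Z) \longrightarrow L^2(\mathopen[0, 2\pi\mathclose], \lambda),
\qquad
(U\xi)(\theta) \,=\, \frac{1}{\sqrt{2\pi}} \sum_{n \in \Z} \xi(n) e^{in\theta},
$$
the sum being convergent in $L^2(\mathopen[0, 2\pi\mathclose], \lambda)$ for $\xi \in \ell^2(\Z)$. The first step is to check that $U$ is a surjective isometry. This reduces to the familiar fact that the family $\bigl(\frac{1}{\sqrt{2\pi}} e^{in\theta}\bigr)_{n \in \Z}$ is an orthonormal basis of $L^2(\mathopen[0, 2\pi\mathclose], \lambda)$ (orthonormality being a direct computation, and totality being the classical completeness of the trigonometric system). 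Under $U$, the canonical basis vector $\delta_n$ is mapped to $\frac{1}{\sqrt{2\pi}} e^{in\theta}$.

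The second step is to verify the intertwining relation $U A_L = M_{2\cos} U$. By boundedness of both operators and density of the linear span of $\{\delta_n\}_{n \in \Z}$, it suffices to check this on basis vectors. For $n \in \Z$ we have $A_L \delta_n = \delta_{n-1} + \delta_{n+1}$, so
$$
(U A_L \delta_n)(\theta)
= \frac{1}{\sqrt{2\pi}} \bigl( e^{i(n-1)\theta} + e^{i(n+1)\theta} \bigr)
= 2\cos(\theta) \cdot \frac{1}{\sqrt{2\pi}} e^{in\theta}
= (M_{2\cos} U \delta_n)(\theta),
$$
which is the desired identity.

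I do not expect any genuine obstacle: the entire argument is a textbook Fourier-series computation. The only point that needs a little care is the normalization, since the target Hilbert space uses the unnormalized Lebesgue measure $\lambda$ rather than $\frac{d\theta}{2\pi}$; this is precisely what forces the factor $\frac{1}{\sqrt{2\pi}}$ in the definition of $U$ and makes the orthonormality computation come out correctly. With the unitary equivalence established, the conclusions of Proposition~\ref{infline} then follow from Proposition~\ref{PropMO} applied to the multiplication operator $M_{\mathopen[0, 2\pi\mathclose], \lambda, 2\cos}$, together with the change-of-variable identifications recorded in Example~\ref{ExCosEqt}(2).
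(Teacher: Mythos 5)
Your proof is correct and follows essentially the same route as the paper: conjugating $A_L$ by the Fourier transform $\ell^2(\Z) \to L^2(\mathopen[0,2\pi\mathclose],\lambda)$ and checking the intertwining relation, which you do on the basis vectors $\delta_n$ while the paper computes $U A_L U^{-1}\eta$ for general $\eta$. Your explicit $\frac{1}{\sqrt{2\pi}}$ normalization is in fact slightly more careful than the paper's stated formula for $U$, which omits this factor even though the target carries the unnormalized Lebesgue measure.
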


\begin{proof}
The Fourier transform
$$
U \, \colon \ell^2 (\Z) \to L^2(\mathopen[0, 2\pi \mathclose], \lambda) ,
\hskip.5cm
(U \xi) (x) = \sum_{n \in \Z} \xi(n) e^{i n x} 
$$
is a surjective isometry with inverse
$$
U^{-1} \, \colon L^2(\mathopen[0, 2\pi \mathclose], \lambda) \to \ell^2 (\Z) ,
\hskip.5cm
(U^{-1} \eta) (n) = \frac{1}{ 2 \pi } \int_0^{2 \pi} \eta (x) e^{- i n x} dx .
$$
For any $\eta \in L^2(\mathopen[0, 2\pi \mathclose], \lambda)$, we have
$$
\begin{aligned}
\left( U A_L U^{-1} \eta \right) (x)
& =
\sum_{n \in \Z} \left( A_L U^{-1} \eta \right) (n) e^{i n x}
\\
& =
\sum_{n \in \Z} 
\Big(
\left( U^{-1} \eta \right) (n-1) e^{i n x} + \left( U^{-1} \eta \right) (n+1) e^{i n x}
\Big)
\\
& =
\Big( \sum_{k \in \Z} \left( U^{-1} \eta \right) (k) e^{i k x} \Big) e^{i x}
\hskip.1cm + \hskip.1cm
\Big( \sum_{k \in \Z} \left( U^{-1} \eta \right) (k) e^{i k x} \Big) e^{-i x}
\\
& =
\big( U (U^{-1}\eta) \big) (x) e^{i x} + \big( U (U^{-1}\eta) \big) (x) e^{-ix} =
2 \cos (x) \eta(x) 
\end{aligned}
$$
for all $x \in \mathopen[0, 2\pi \mathclose]$,
so that
$$
U A_L U^{-1} = M_{\mathopen[0, 2\pi \mathclose], \lambda, 2 \cos}
$$
as was to be proved.
\end{proof}

\begin{proof}[Proof of Proposition~\ref{infline}]
For~\ref{1DEinfline} and~\ref{2DEinfline},
use Lemma~\ref{Leminfline}:
the norm of $A_L$ is the norm of
$M_{\mathopen[0, 2\pi \mathclose], \lambda, 2 \cos}$,
which is $\sup_{-2 \le x \le 2} \vert 2 \cos (x) \vert = 2$,
and the spectrum of $A_L$
is the spectrum of $M_{\mathopen[0, 2\pi \mathclose], \lambda, 2 \cos}$,
which is the range of the function $2 \cos$,
namely which is $\mathopen[ -2, 2 \mathclose]$.

\vskip.2cm

\ref{3DEinfline}
Let $j \in \Z$, viewed as a vertex of $L$.
The vertex spectral measure $\mu_j$ at $j$ is defined by
$$
\int_{ \mathopen[ -2, 2 \mathclose] } f(x) d\mu_j (x)
= \langle f(A_L) \delta_j \sca \delta_j \rangle
$$
for all continuous function $f$ on the spectrum of $A_L$.
For $n \in \N$, its $n^{{\rm th}}$ moment is
$$
\int_{ \mathopen[ -2, 2 \mathclose] } x^n d\mu_j (x)
= \langle (A_L)^n \delta_j \sca \delta_j \rangle .
$$
This number is also the number of paths of length $n$
from $j$ to $j$ in the graph $L$.
When $n$ is odd, this number is clearly $0$.
When $n = 2m$ is even, each such path
has $m$ left steps and $m$ right steps,
so that this number is the binomial coefficient $\binom{2m}{m}$.
\par

The moments of the measure $\frac{ 1 }{ \pi \sqrt{4-x^2} } dx$
on $\mathopen[ -2, 2 \mathclose]$ are also easy to compute.
Moments of odd order vanish, because
$\int_{-2}^2 \frac{ f(x) }{ \pi \sqrt{4-x^2} } dx = 0$
when $f$ is an odd function,
in particular when $f(x) = x^{2m+1}$ for some $m \in \N$.
For moments of even order~$2m$,
using again the change of variables $x = 2 \cos \theta$,
we have
$$
\begin{aligned}
\int_{-2}^2 \frac{ x^{2m} }{ \pi \sqrt{4 - x^2} } \hskip.1cm dx
&=
\frac{1}{ \pi } \int_0^\pi
\frac{ (2 \cos \theta)^{2m} }{ 2 \sin \theta } 2 \sin \theta \hskip.1cm d\theta 
=
\frac{1}{ \pi } \int_0^\pi
\left( e^{i\theta} + e^{-i\theta} \right)^{2m} \hskip.1cm d\theta
\\
&=
\frac{1}{ \pi } \sum_{k = 0}^{2m} \binom{2m}{k} \int_0^\pi 
e^{i 2 (m-k) \theta} \hskip.1cm d\theta
=
\binom{2m}{m} ,
\end{aligned}
$$
because all but one term ($k=m$) vanish in the sum over $k$.
\par

These computations show that the measures $\mu_j$
and $\frac{ dx }{ \pi \sqrt{4-x^2} }$ on $\mathopen[ -2, 2 \mathclose]$
have the same moments, hence they are equal.
In particular $\mu_j$ is independent of $j$.
It follows from Proposition~\ref{PropDominant}~\ref{6DEPropDominant}
that this measure is a scalar-valued spectral measure for $A_L$,
and that the vertex $j$ is dominant.

\vskip.2cm

On the one hand, Claim~\ref{4DEinfline} 
follows from Proposition~\ref{PropMOmult}.
On the other hand, we prefer to show it with a more elementary argument,
as follows.
\par

We view the operator
$M_{\mathopen[0, 2\pi \mathclose], \lambda, 2 \cos}$
of Lemma~\ref{Leminfline}
as the direct sum of two operators:
the operator $M_{\mathopen[0, \pi \mathclose], \lambda, 2 \cos}$
of multiplication by $2 \cos$ on $L^2( \mathopen[ 0, \pi \mathclose], \lambda)$
and the operator $M_{\mathopen[\pi, 2\pi \mathclose], \lambda, 2 \cos}$
of multiplication by $2 \cos$ on $L^2( \mathopen[ \pi, 2 \pi \mathclose], \lambda)$.
By Example~\ref{ExCosEqt},
each of these two operators is unitarily equivalent
to the operator $M_1$ of multiplication by $x$
on $L^2( \mathopen[ -2, 2 \mathclose], \lambda)$.
It follows that $M_{\mathopen[0, 2\pi \mathclose], \lambda, 2 \cos}$,
and therefore also the adjacency operator $A_L$ of the line,
are unitarily equivalent to the operator of multiplication by $x$
on the space $L^2(\mathopen[ -2, 2 \mathclose], \lambda, \C^2)$,
so that $A_L$ is of uniform multiplicity $2$.
\end{proof}

Let now $d$ be an integer, $d \ge 1$.
Let $\{e_1, \hdots, e_d\}$ be the canonical basis
of the free abelian group $\Z^d$.
The \textbf{lattice} $L_d$ is the graph
with vertex set $\Z^d$ and edge set
$$
E = \{ \{u, v\} : u \in \Z^d, \hskip.1cm v = u + e_j
\hskip.2cm \text{for some} \hskip.2cm
j \in \{1, \hdots, d \} \} .
$$
In other words, $L_d$ is the Cayley graph of the group~$\Z^d$
with respect to the generating set $\{ \pm e_1, \hdots, \pm e_d \}$.
The adjacency operator $A_d$ of $L_d$ is given by
$$
(A_d \xi)(u) = \sum_{j = 1}^d \xi(u-e_j) + \xi(u+e_j)
\hskip.5cm \text{for all} \hskip.2cm
\xi \in \ell^2(\Z^d) 
\hskip.2cm \text{and} \hskip.2cm
u \in \Z^d .
$$
When $d=1$, the lattice $L_1$ is the infinite line $L$
of Proposition~\ref{infline};
now we denote by $\mu_1$ the vertex spectral measure of the line,
given by
$d \mu_1 (x) = \frac{1}{\pi \sqrt{ 4 - x^2} } dx$
for all $x \in \mathopen[ -2, 2 \mathclose]$.

\begin{prop}
\label{lattices}
Let $d \ge 2$. Let $L_d$ be the lattice graph of dimension $d$
and let $A_d$ be its adjacency operator.
\begin{enumerate}[label=(\arabic*)]
\item\label{1DElattices}
The norm of $A_d$ is $2$.
\item\label{2DElattices}
The spectrum of $A_d$ is
$\mathopen[ -2d, 2d \mathclose]$.
\item\label{3DElattices}
The vertex spectral measure $\mu_d$ of a vertex $v$ in $L_d$
is independent of $v$; it is the convolution of $d$ copies
of the spectral measure $\mu_1$ of Proposition~\ref{infline}.
It is a scalar-valued spectral measure for $A_d$
and it is equivalent to
the Lebesgue measure supported on $\mathopen[ -2d, 2d \mathclose]$.
\item\label{4DElattices}
The operator $A_d$ had infinite uniform multiplicity.
\end{enumerate}
\end{prop}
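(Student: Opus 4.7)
The strategy is to diagonalize $A_d$ by Fourier transform, generalizing Lemma~\ref{Leminfline} to $d$ dimensions. Let $\mathcal{F}\colon \ell^2(\Z^d) \to L^2(\mathbb{T}^d, \lambda)$ be the unitary Fourier transform, with $\lambda$ normalized Haar measure on $\mathbb{T}^d = \left(\R / 2\pi \Z\right)^d$. A coordinate-by-coordinate repetition of the computation in Lemma~\ref{Leminfline} yields
\[
\mathcal{F} A_d \mathcal{F}^{-1} \;=\; M_\varphi, \qquad \varphi(x_1,\dots,x_d) = 2\cos x_1 + 2\cos x_2 + \cdots + 2\cos x_d .
\]
Equivalently, under the identification $\ell^2(\Z^d) \cong \ell^2(\Z)^{\otimes d}$, the operator $A_d$ is the commuting sum $\sum_{j=1}^{d} \mathrm{Id}\otimes\cdots\otimes A_L \otimes\cdots\otimes \mathrm{Id}$ (with $A_L$ in the $j$-th slot). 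Claims~\ref{1DElattices} and~\ref{2DElattices} then follow at once from Proposition~\ref{PropMO}: $\Vert A_d\Vert = \Vert\varphi\Vert_\infty = 2d$, and $\Sigma(A_d) = R_\varphi = \mathopen[-2d, 2d\mathclose]$ since the continuous function $\varphi$ takes all values in this interval.

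For claim~\ref{3DElattices}, apply the $d$-fold tensor product version of Proposition~\ref{localmeasuretensorproduct} stated in Remark~\ref{remconvol}. Since $\delta_v = \delta_{n_1} \otimes \cdots \otimes \delta_{n_d}$ under the tensor identification and each factor has vertex spectral measure $\mu_1$ (independent of the base point by Proposition~\ref{infline}), the local spectral measure at $\delta_v$ is the $d$-fold convolution $\mu_d := \mu_1^{\ast d}$, independent of $v$. All basis vectors $\delta_v$ thus share the same local measure $\mu_d$, so Proposition~\ref{PropDominant}\ref{6DEPropDominant} shows every $\delta_v$ is dominant and $\mu_d$ is a scalar-valued spectral measure for $A_d$. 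Equivalence with Lebesgue on $\mathopen[-2d, 2d\mathclose]$ follows from an elementary induction: $\mu_1$ has density strictly positive on $\mathopen]-2,2\mathclose[$, and the convolution of two absolutely continuous measures with densities strictly positive on open intervals has a density strictly positive on the sum interval.

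For claim~\ref{4DElattices}, apply Proposition~\ref{PropMOmult} to $M_\varphi$ on $L^2(\mathbb{T}^d, \lambda)$ (where the constant multiplicity function $\mathfrak{m}\equiv 1$ applies): the spectral multiplicity of $A_d$ at $z$ equals $\sharp\bigl(\varphi^{-1}_{\lambda}(z)\bigr)$. The function $\varphi$ is smooth on $\mathbb{T}^d$, with critical points only where every $\sin x_j$ vanishes; the corresponding critical values form the finite set $\{-2d, -2d+4, \dots, 2d-4, 2d\}$, which is $\mu_d$-null. For any regular value $z \in \mathopen]-2d, 2d\mathclose[$, the level set $\varphi^{-1}(z)$ is a nonempty smooth compact submanifold of dimension $d-1 \ge 1$, hence an uncountable set. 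A tubular-neighborhood (coarea) argument shows $\varphi^{-1}(z) \subseteq \varphi^{-1}_{\lambda}(z)$: for $x \in \varphi^{-1}(z)$ and any small neighborhood $V$ of $x$, both $\lambda(V \cap \varphi^{-1}(D_\varepsilon(z)))$ and $\lambda(\varphi^{-1}(D_\varepsilon(z)))$ are comparable to $\varepsilon$ as $\varepsilon \to 0$, with positive coefficients coming from the local hypersurface area and $\vert \nabla \varphi\vert^{-1}$. Therefore $\mathfrak{m}(z) = \infty$ for $\mu_d$-almost every $z$.

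The main obstacle is claim~\ref{4DElattices}: translating the geometric fact ``generic level set of $\varphi$ is a $(d-1)$-manifold'' into the measure-theoretic statement about essential pre-images. The coarea/tubular argument is routine away from the finite critical set of $\varphi$, and those critical values form a $\mu_d$-null set, so they do not affect the almost-everywhere conclusion.
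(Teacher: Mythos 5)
Your proof follows the same route as the paper: Fourier diagonalization to the multiplication operator by $2\sum\cos$ for claims~(1)--(2), the tensor-product/convolution identification together with Proposition~\ref{localmeasuretensorproduct}, Remark~\ref{remconvol} and Proposition~\ref{PropDominant}~\ref{6DEPropDominant} for claim~(3), and Proposition~\ref{PropMOmult} for claim~(4). The only differences are that you spell out the critical-value and level-set (coarea) details behind the infinite essential pre-images, which the paper leaves implicit in its appeal to Proposition~\ref{PropMOmult}, and that you correctly obtain $\Vert A_d \Vert = 2d$, in agreement with the paper's own proof (the value $2$ in the statement of claim~(1) is evidently a typo).
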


For the proof of the proposition above,
we begin as for Proposition~\ref{infline},
with minor modifications.
Much of what follows holds for $d \ge 1$, rather than for $d \ge 2$ only.
Proposition~\ref{PropMOmult} is used for
the only slightly delicate point, which is our proof of~\ref{4DElattices}.

\begin{lem}
\label{Lemlattices}
Let $\lambda$ denote the Lebesgue measure
on $\mathopen[0, 2\pi \mathclose]^d$.
The Fourier transform
$$
U \, \colon \ell^2 (\Z^d) \to L^2(\mathopen[0, 2\pi \mathclose]^d, \lambda) ,
\hskip.5cm
(U \xi) (x) = \sum_{u \in \Z^d} \xi(u) e^{i \langle u \mid x \rangle} 
$$
(where $\langle u \sca x \rangle = \sum_{j=1}^d u_j x_j$)
is a surjective isometry with inverse
$$
U^{-1} \, \colon L^2(\mathopen[0, 2\pi \mathclose]^d, \lambda) \to \ell^2 (\Z^d) ,
\hskip.5cm
(U^{-1} \eta) (u) =
\frac{1}{ (2 \pi)^d } \int_{ \mathopen[ 0, 2 \pi \mathclose]^d }
\eta (x) e^{- i \langle u \mid x \rangle } dx .
$$
Let $2 \sum \cos$ be the function
$\mathopen[0, 2\pi \mathclose]^d \to \R , \hskip.2cm
x = ( x_j )_{1 \le j \le d} \mapsto 2 \sum_{j=1}^d \cos (x_j)$.
\par

The operators $A_d$ and $M_{\mathopen[0, 2\pi \mathclose]^d, \lambda, 2\sum \cos}$
are unitarily equivalent; more precisely:
$$
U A_d U^{-1} = M_{\mathopen[0, 2\pi \mathclose]^d, \lambda, 2\sum \cos} .
$$
\end{lem}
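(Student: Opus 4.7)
The plan is to mimic the one-dimensional computation of Lemma~\ref{Leminfline}, replacing $\Z$ by $\Z^d$ and the scalar exponentials $e^{inx}$ by the characters $e^{i\langle u\mid x\rangle}$ of the $d$-torus.

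First I would record that $U$ is a surjective isometry with the stated inverse. This is the Plancherel--Parseval theorem for $L^2([0,2\pi]^d,\lambda)$: the family $\{(2\pi)^{-d/2}e^{i\langle u\mid \cdot\rangle}\}_{u\in\Z^d}$ is a complete orthonormal system, and the formula given for $(U^{-1}\eta)(u)$ is exactly the $u$-th Fourier coefficient of $\eta$. I would treat this as standard, verifying only that $U\delta_u$ equals $e^{i\langle u\mid \cdot\rangle}$ and that $U$ and $U^{-1}$ are genuinely two-sided inverses on the dense subspaces of finitely supported sequences and trigonometric polynomials, extending by continuity.

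For the intertwining identity, I would decompose
\[
A_d \;=\; \sum_{j=1}^d \bigl(T_{e_j}+T_{-e_j}\bigr),
\]
where $T_v\xi = \xi(\,\cdot\,-v)$ is translation by $v$ on $\ell^2(\Z^d)$. The core computation is that for $\xi$ of finite support, the change of summation variable $w=u-v$ yields
\[
(UT_v\xi)(x)=\sum_{u\in\Z^d}\xi(u-v)e^{i\langle u\mid x\rangle}
=e^{i\langle v\mid x\rangle}\sum_{w\in\Z^d}\xi(w)e^{i\langle w\mid x\rangle}
=e^{i\langle v\mid x\rangle}(U\xi)(x),
\]
so that $UT_vU^{-1}$ is multiplication by $e^{i\langle v\mid \cdot\rangle}$. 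Summing over $v=\pm e_j$ for $j=1,\dots,d$ gives that $UA_dU^{-1}$ is multiplication by $\sum_{j=1}^d(e^{ix_j}+e^{-ix_j})=2\sum_{j=1}^d\cos(x_j)$, which is exactly $M_{[0,2\pi]^d,\lambda,2\sum\cos}$.

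The only point requiring care is that for a general $\xi\in\ell^2(\Z^d)$ the series defining $U\xi$ converges a priori in $L^2$-norm rather than pointwise, so the formal manipulations above must be performed first on the dense subspace of finitely supported sequences and then extended by continuity, using that $U$, $U^{-1}$, $T_{\pm e_j}$, and multiplication by the bounded function $2\sum\cos$ are all bounded operators. I expect this density extension to be the only non-mechanical step; everything else is classical Fourier analysis and direct bookkeeping.
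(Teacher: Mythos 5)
Your proposal is correct and follows essentially the same route as the paper: conjugating $A_d$ by the Fourier transform and reindexing the sum to pull out the factors $e^{\pm i x_j}$, which is exactly the computation the paper performs (the paper does it in one block for $\eta \in L^2$ rather than factoring $A_d$ through the individual translations $T_{\pm e_j}$, but the reindexing step is identical). Your explicit remark about first working on finitely supported sequences and then extending by density is a point of care the paper leaves implicit, but it does not change the substance of the argument.
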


\begin{proof}
For any $\eta \in L^2(\mathopen[0, 2\pi \mathclose]^d, \lambda)$, we have
$$
\begin{aligned}
& \left( U A_d U^{-1} \eta \right) (x) =
\sum_{u \in \Z^d} \left( A_d U^{-1} \eta \right) (u) e^{i \langle u \mid x \rangle }
\\
& \hskip.5cm =
\sum_{u \in \Z^d} \sum_{j=1}^d
\Big( \left( U^{-1} \eta \right) (u-e_j) e^{i \langle u \mid x \rangle }
+ \left( U^{-1} \eta \right) (u+e_j) e^{i \langle u \mid x \rangle} \Big)
\\
& \hskip.5cm =
\sum_{j=1}^d
\Big( \sum_{k \in \Z^d} 
\left( U^{-1} \eta \right) (k) e^{i \langle k \mid x \rangle } \Big) e^{i x_j}
\hskip.1cm + \hskip.1cm \sum_{j=1}^d
\Big( \sum_{k \in \Z^d} 
\left( U^{-1} \eta \right) (k) e^{i \langle k \mid x \rangle } \Big) e^{-i x_j}
\\
& \hskip.5cm =
\sum_{j=1}^d U (U^{-1} \eta) (x) e^{i x_j}
+ \sum_{j=1}^d U (U^{-1} \eta) (x) e^{-i x_j}
= \left( 2 \sum_{j=1}^d \cos (x_j) \right) \eta(x) ,
\end{aligned}
$$
so that $U A_L U^{-1}$ is the operator of multiplication by $2 \sum \cos$
on the Hilbert space $L^2(\mathopen[0, 2\pi \mathclose]^d, \lambda)$.
\end{proof}

\begin{proof}[Proof of Proposition~\ref{lattices}]
By Proposition~\ref{PropMO},
the operator $M_{\mathopen[0, 2\pi \mathclose]^d, \lambda, 2\sum \cos}$
of multiplication by the function
$2 \sum_{j=1}^d \cos (x_j)$ on $L^2(\mathopen[0, 2\pi \mathclose]^d, \lambda)$
has norm $2d$ and spectrum $\mathopen[ -2d, 2d \mathclose]$.
Claims \ref{1DElattices} and \ref{2DElattices} follow from Lemma~\ref{Lemlattices}.
\par

Observe that there is a natural isomorphism
$$
\ell^2(\Z) \otimes \ell^2(\Z) \otimes \cdots \otimes \ell^2(\Z) \to \ell^2(\Z^d)
$$
by which we can identify the operators
$$
A_1 \otimes {\rm Id} \otimes \cdots \otimes {\rm Id} + \cdots
+ {\rm Id} \otimes \cdots \otimes {\rm Id} \otimes A_1
\hskip.5cm \text{and} \hskip.5cm
A_d .
$$
By Proposition~\ref{localmeasuretensorproduct},
the vertex spectral measure of $A_d$ at a vertex of $L_d$
is the convolution of $d$ copies
of the vertex spectral measure of $A_1$ at a vertex of $L_1$.
It follows from Proposition~\ref{PropDominant}~\ref{6DEPropDominant}
that the vertex spectral measure of $A_d$ at a vertex of $L_d$
is a scalar-valued spectral measure for $A_d$.
This proves the first part of Claim \ref{3DElattices}.
\par

By Proposition~\ref{infline},
the vertex spectral measure at a vertex of the line $L_1$ is
$d\mu_1(x) = f(x) dx$,
where $f(x) = \frac{1}{ \pi \sqrt{4 - x^2} }$ if $-2 < x < 2$
and $f(x) = 0$ otherwise,
and where $\lambda$ stands for the Lebesgue measure.
The vertex spectral measure at a vertex of the lattice $L_d$,
which is the convolution power
$\mu_d \Doteq \mu_1 \ast \mu_1 \ast \cdots \ast \mu_1$ ($d$~factors),
is consequently of the form $f_d (x) dx$,
where $f_d$ is a continuous function,
$f_d(x) > 0$ for all $x \in \mathopen] -2d, 2d \mathclose[$,
and $f_d(x) = 0$ for all $x$ such that $\vert x \vert \ge 2d$.
In particular, this measure $\mu_d$ is equivalent to the Lebesgue measure
on the interval $\mathopen[ -2d, 2d \mathclose]$.
This concludes the proof of Claim \ref{3DElattices}.
\par

By Proposition~\ref{PropMOmult},
the operator $M_{\mathopen[0, 2\pi \mathclose]^d, \lambda, 2\sum \cos}$
has uniform infinite spectral multiplicity.
By Lemma~\ref{Lemlattices},
Claim \ref{4DElattices} follows.
\end{proof}

\begin{rem}
\label{remLonRd}
Consider the so-called \emph{discrete Laplacian}
$D_d = 2d \hskip.1cm {\rm Id} - A_d$
on the lattice $L_d$, acting on $\ell^2(\Z^d)$.
Proposition~\ref{lattices} shows that
$D_d$ has spectrum $\mathopen[ 0, 4d \mathclose]$
and uniform multiplicity, $2$ when $d=1$ and $\infty$ when $d \ge 2$.
The \emph{continuous} Laplacian $\Delta_d = - \sum_{j=1}^d \frac{ \partial^2 }{ \partial x_j^2 }$
on the Euclidean space $\R^d$
is an unbounded self-adjoint operator with domain
$$
\emph{Dom} (\Delta_d) =
\left\{ \xi \in L^2(\R^d, \lambda)
\hskip.2cm : \hskip.2cm
\int_{\R^d} \Vert k \Vert^2 \hskip.1cm \vert \widehat \xi (k) \vert^2
d \lambda (k) < \infty \right\} ,
$$
where $\lambda$ denotes the Lebesgue measure
and $\widehat \xi$ the Fourier transform of $\xi$.
The spectrum of $\Delta_d$ is $\mathopen[ 0 , \infty \mathclose[$.
The operators $D_d$ and $\Delta_d$ share the same multiplicities:
it is known that $\Delta_d$ has uniform multiplicity,
$2$ when $d=1$ and $\infty$ when $d \ge 2$.
\end{rem}

Let $D_\infty = (V', E')$ be the graph obtained from $R = (V, E)$
by adding one vertex $0'$ to the set of vertices $V$ of $R$
and one edge $\{0', 1\}$ to the set of edges $E$ of $R$.
Thus $V' = \{0'\} \cup \N$ and 
$E' = \big\{ \{0', 1\}, \{0,1\}, \{1,2\}, \{2, 3\}, \hdots \big\} = \{0', 1\} \cup E$.
Let $A_D$ be the adjacency operator of $D_\infty$.

\begin{prop}
\label{PropSpecD}
The spectrum of $A_D$ is $\mathopen[ -2, 2 \mathclose]$
and $0$ is an eigenvalue of $A_D$.
The vertices $0$ and $0'$ are cyclic,
and $A_D$ is multiplicity-free.
\end{prop}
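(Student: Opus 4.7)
My plan is to exploit the order-two automorphism $\sigma$ of $D_\infty$ that swaps $0$ and $0'$ and fixes every other vertex. It induces on $\ell^2(V')$ a unitary involution $U_\sigma$ commuting with $A_D$, hence an $A_D$-invariant orthogonal decomposition $\ell^2(V') = \Hi_+ \oplus \Hi_-$ into the $\pm 1$-eigenspaces of $U_\sigma$. The antisymmetric summand $\Hi_-$ is the one-dimensional line spanned by $f := (\delta_0 - \delta_{0'})/\sqrt 2$, and a one-line computation gives $A_D f = 0$, proving that $0$ is an eigenvalue of $A_D$. With respect to the orthonormal basis $e_0 := (\delta_0 + \delta_{0'})/\sqrt 2$, $e_n := \delta_n$ for $n \ge 1$ of $\Hi_+$, the restriction $A_D|_{\Hi_+}$ is the Jacobi matrix $J_+$ with diagonal zero and off-diagonal weights $(\sqrt 2, 1, 1, 1, \ldots)$.

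Next I would determine $\Sigma(J_+)$. Since $J_+$ differs from the free Jacobi matrix $J$ of Proposition~\ref{infray} only in the $(0,1)$ and $(1,0)$ entries, the difference $J_+ - J$ has rank two and is compact. Weyl's theorem on essential spectra combined with Proposition~\ref{infray} (which asserts that $J$ has purely continuous spectrum $[-2,2]$) yields $\sigma_{\mathrm{ess}}(J_+) = [-2,2]$. To conclude $\Sigma(J_+) = [-2,2]$, I rule out all eigenvalues of $J_+$. For $|\lambda| > 2$, write $\lambda = \pm 2\cosh\alpha$ with $\alpha > 0$; the three-term recurrence for $n \ge 2$ admits only the decaying exponential mode $Ce^{-n\alpha}$ in $\ell^2$, and substituting this into the two equations at $n = 0$ and $n = 1$ (which reflect the anomalous weight $\sqrt 2$) forces $\alpha = 0$, a contradiction. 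For $\lambda \in [-2,2] \setminus \{0\}$ the formal solutions to the recurrence are bounded but never square-summable, and for $\lambda = 0$ a direct recursion from $J_+ \xi = 0$ yields $\xi_1 = 0$, $\xi_3 = 0, \ldots$ and $\xi_{2k} = (-1)^k \sqrt 2\, \xi_0$, which is not in $\ell^2$. Combining $\Sigma(J_+) = [-2,2]$ with $A_D|_{\Hi_-} = 0$ gives $\Sigma(A_D) = [-2,2]$.

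For cyclicity of $\delta_0$, I use $\delta_0 = (e_0 + f)/\sqrt 2$ and $A_D f = 0$, so that $A_D^n \delta_0 = (1/\sqrt 2) J_+^n e_0 \in \Hi_+$ for every $n \ge 1$. The vector $e_0$ is cyclic for $J_+$ by the standard Jacobi-matrix argument: since all off-diagonal weights are nonzero, induction on $n$ places each $e_n$ in the span of $\{J_+^k e_0\}_{0 \le k \le n}$. Moreover the eigenvalue analysis above shows $\ker J_+ = \{0\}$, so $\overline{\mathrm{range}(J_+)} = \Hi_+$, which implies $\overline{\mathrm{span}}\{J_+^n e_0 : n \ge 1\} = \Hi_+$. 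The closed cyclic subspace of $\delta_0$ thus contains all of $\Hi_+$ together with $\delta_0$ itself, hence also $f = \sqrt 2\,\delta_0 - e_0$, and therefore all of $\Hi_+ \oplus \Hi_- = \ell^2(V')$. By $\sigma$-symmetry the vertex $0'$ is cyclic as well, and Proposition~\ref{PropMultiplicityFree} promotes cyclicity into the multiplicity-free conclusion.

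The main obstacle is the eigenvalue analysis of $J_+$: the boundary constraints at the degree-three vertex $1$ must be handled with care to rule out the decaying exponential for $|\lambda| > 2$ and the alternating $\sqrt 2$-pattern at $\lambda = 0$. Once this arithmetic is in place, the symmetry decomposition makes the rest of the argument essentially mechanical.
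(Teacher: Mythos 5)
Your proof is correct, but it follows a genuinely different route from the paper's. The paper obtains the inclusion $\Sigma(A_D) \subseteq \mathopen[-2,2\mathclose]$ by strong approximation of $A_D$ by the adjacency operators of the finite graphs $D'_n$, whose spectra lie in $\mathopen]-2,2\mathclose[$ by \cite[Theorem 3.1.3]{BrHa--12}, combined with Weyl's theorem for the reverse inclusion; it then exhibits the kernel vector $\delta_0 - \delta_{0'}$ directly and defers the cyclicity of the vertices $0$ and $0'$ to \cite[Example 7.2]{BrHN}. You instead diagonalize the order-two symmetry swapping $0$ and $0'$, which splits $A_D$ into the zero operator on the one-dimensional antisymmetric line (giving the eigenvalue $0$) and the Jacobi matrix $J_+$ with off-diagonal weights $(\sqrt 2, 1, 1, \dots)$ on the symmetric part; this $J_+$ is precisely the matrix $J_a$ of~\eqref{eqJacAvecA} at the critical value $a = \sqrt 2$, so your eigenvalue exclusion reproves the boundary case of Proposition~\ref{PropQuoteJa}~\ref{2DEPropQuoteJa} (and the remark following its corollary, that eigenvalues $\pm a^2/\sqrt{a^2-1}$ appear only for $a > \sqrt 2$, confirms that your threshold computation is the delicate point). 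Your approach buys a self-contained treatment: the spectrum comes from the difference-equation analysis rather than from finite truncations, and the cyclicity of $\delta_0$ is derived internally from the cyclicity of $e_0$ for $J_+$ together with $\ker J_+ = \{0\}$, hence $\overline{J_+(\Hi_+)} = \Hi_+$, which legitimately recovers $\Hi_+$ from the powers $A_D^n\delta_0$ with $n \ge 1$ and then $f = \sqrt 2\,\delta_0 - e_0$ from $\delta_0$ itself. Two small points of care in your write-up: at $\lambda = \pm 2$ the formal solutions of the free recurrence are $(A + Bn)(\pm 1)^n$, so "bounded but never square-summable" is not quite accurate there, though the conclusion (no nonzero $\ell^2$ solution) still holds; and the multiplicity-free conclusion via Proposition~\ref{PropMultiplicityFree} is exactly the paper's final step.
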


\begin{proof}
The spectrum $\Sigma (X)$ of a bounded self-adjoint operator $X$
is the union of the essential spectrum $\Sigma_{{\rm ess}} (X)$
and a discrete set of points in $\R \smallsetminus \Sigma_{{\rm ess}} (X)$
which are eigenvalues of finite multiplicity.
In particular
$\Sigma_{{\rm ess}} (J_1) = \Sigma(J_1) = \mathopen[ -2, 2 \mathclose]$
by Proposition~\ref{infray}.
\par

Let $R' = (V', E)$ be the graph obtained from $R = (V, E)$
by adding one isolated vertex $\{0'\}$,
and let $A'_R$ be its adjacency operator.
The marked spectrum of $A'_R$ is the union of that of $A_R = J_1$
and of the simple eigenvalue $0$.
The operator $A_D$ is a perturbation of $A'_R$
by an operator of finite rank, indeed of rank $2$.
If $K$ is a compact self-adjoint operator on the same space as $X$,
it is a theorem of Weyl that
$\Sigma_{{\rm ess}} (X+K) = \Sigma_{{\rm ess}} (X)$
\cite[Theorem 3.14.1]{Sim4--15}.
In particular
\begin{equation}
\label{eqSigmaEssAD}
\Sigma_{{\rm ess}}(A_D)
\overset{\text{(by Weyl)}}{=} \Sigma_{{\rm ess}}(A'_R)
= \Sigma(A'_R) = \mathopen[ -2, 2 \mathclose] .
\end{equation}
\par

Let $n \ge 4$.
The finite graph $D_n$ has vertex set $\{0', 0, 1, \hdots, n-2\}$
and edge set $\Big\{ \{0',1\}, \{0,1\}, \{1,2\}, \hdots, \{n-3,n-2\} \Big\}$.
The spectrum $\Sigma(D_n)$ of its adjacency operator
is well-known
\cite[Theorem 3.1.3]{BrHa--12}
to be a finite subset of $\mathopen] -2, 2 \mathclose[$.
Let $D'_n$ be the graph with vertex set $V'$ and the same edge set as $D_n$.
Since $0 \in \Sigma(D_n)$, the spectrum of $D'_n$ is the same as that of $D_n$.
For $n \to \infty$, the sequence of the adjacency operators of $D'_n$
converges strongly to $A_D$.
It follows that $\Sigma(A_D)$ is contained in the union
$\bigcup_{n \ge 4} \Sigma(D'_n)$, hence in $\mathopen[ -2, 2 \mathclose]$;
see~\cite[Section~X.7]{DuSc--63}.
Together with~\eqref{eqSigmaEssAD},
this shows that $\Sigma(A_D) = \mathopen[ -2, 2 \mathclose]$.
\par

Let $\xi \in \ell^2(V')$ be defined by
$\xi(0) = 1$, $\xi(0') = -1$ and $\xi(j) = 0$ for all $j \ge 1$.
Then $A_D \xi = 0$, so that $0$ is an eigenvalue of $A_D$.
It is easy to check that $0$ and $0'$ are cyclic vertices;
if necessary, see \cite[Example 7.2]{BrHN}.
\end{proof}

\begin{prop}
\label{propnormAis2}
Let $G$ be an infinite connected graph of bounded degree
with adjacency operator $A_G$ such that $\Vert A_G \Vert \le 2$.
Then $\Vert A_G \Vert = 2$, $\Sigma (A_G) = \mathopen[ -2, 2 \mathclose]$,
and $G$ is isomorphic to one of the three following graphs:
\begin{enumerate}
\item[--]
the infinite ray $R$ and then $A_G$ is multiplicity-free,
without eigenvalue,
\item[--]
the graph $D_\infty$ and then $A_G$ is multiplicity-free, 
with an eigenvalue,
\item[--]
the infinite line $L$ and then $A_G$ is of uniform multiplicity two,
without eigenvalue.
\end{enumerate}
It follows that these three graphs are determined by their marked spectrum
among connected graphs of bounded degree.
\end{prop}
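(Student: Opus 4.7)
The plan is to combine a Smith-type classification of finite subgraphs with the marked spectrum data of Propositions~\ref{infray}, \ref{infline}, and~\ref{PropSpecD}. First I would reduce to finite subgraphs: for any finite subgraph $H \subseteq G$ one has $\Vert A_H \Vert \le \Vert A_G \Vert \le 2$. By the classical theorem of Smith (cf.~\cite{CvGu--75}), a finite connected graph with spectral radius at most~$2$ is either a Dynkin diagram $A_n, D_n, E_6, E_7, E_8$ (radius strictly less than~$2$) or an extended Dynkin diagram $\tilde A_n, \tilde D_n, \tilde E_6, \tilde E_7, \tilde E_8$ (radius exactly~$2$); moreover, any connected graph strictly containing an extended Dynkin diagram has spectral radius strictly greater than~$2$.

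I would then exclude extended Dynkins from $G$. If $G$ contained one as a finite subgraph, then since $G$ is infinite and connected it would also contain a strictly larger finite connected subgraph, whose spectral radius would exceed~$2$, a contradiction. Hence every finite connected subgraph of $G$ is a Dynkin diagram. Since Dynkin diagrams have maximum degree~$\le 3$ and at most one vertex of degree~$3$, the same is true of $G$.

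The classification then splits into two cases. If all vertices of $G$ have degree~$\le 2$, then $G$ is a connected, infinite, acyclic graph of maximum degree~$2$ (no cycle, since a cycle $C_n = \tilde A_{n-1}$ would already be an extended Dynkin subgraph), so $G \cong R$ or $G \cong L$. Otherwise $G$ has a unique vertex $v$ of degree~$3$, and the three connected components of $G \smallsetminus \{v\}$ are paths of lengths $a \le b \le c$ with $c = \infty$. Scanning the branch-length data $(1,1,k) = D_{k+3}$, $(1,2,k)$, $(1,3,k)$, $(2,2,k)$, \ldots\ against the Dynkin list, one checks that $(a,b,k)$ remains Dynkin for arbitrarily large~$k$ only when $a = b = 1$; in every other case a finite truncation already properly contains one of $\tilde E_6$, $\tilde E_7$, $\tilde E_8$, violating $\Vert A_G \Vert \le 2$. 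Hence $a = b = 1$, $c = \infty$, and $G \cong D_\infty$.

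For the final determinedness statement, let $G'$ be any connected graph of bounded degree sharing the marked spectrum of one of $R$, $D_\infty$, $L$. Each of these has $\Sigma(A) = \mathopen[ -2, 2 \mathclose]$, so $\Vert A_{G'} \Vert = 2$ and, since $G'$ cannot be finite (finite graphs have discrete spectrum rather than an interval), the classification above forces $G' \in \{R, D_\infty, L\}$. These three are distinguished pairwise by Propositions~\ref{infray}, \ref{infline}, and~\ref{PropSpecD}: $L$ has uniform multiplicity~$2$, $D_\infty$ is multiplicity-free with an eigenvalue at~$0$ (so its scalar-valued spectral measure carries an atom at~$0$), and $R$ is multiplicity-free with scalar-valued spectral measure equivalent to Lebesgue (no atoms). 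The main obstacle is the Smith-type classification of finite subgraphs together with the observation that strict extensions of extended Dynkins blow past spectral radius~$2$; once these are granted, the branch-length case analysis and the spectral bookkeeping that separates the three candidates are routine.
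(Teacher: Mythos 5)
Your proposal is correct and follows essentially the same route as the paper: reduce to finite subgraphs, invoke the Smith classification (Theorem 3.1.3 of \cite{BrHa--12}, which the paper cites in the equivalent form that any finite connected graph strictly containing an extended Dynkin diagram $\widetilde A_n$, $\widetilde D_n$, $\widetilde E_n$ has spectral radius exceeding $2$), deduce that $G$ is a tree with at most one vertex of degree $3$ and two of its branches of length $1$, and then separate $R$, $D_\infty$, $L$ by their multiplicity functions and the presence or absence of the eigenvalue $0$. The branch-length case analysis and the final determinedness argument match the paper's.
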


\begin{proof}
Let $F = (V_F, E_F)$ be a finite subgraph of $G = (V_G, E_G)$,
and let $F_{{\rm ind}} = (V_F, E_{{\rm ind}})$
be the subgraph of $G$ induced by $V_F$.
Then $\Vert A_F \Vert \le \Vert A_{F_{{\rm ind}}} \Vert$
by Perron--Frobenius theory and
$\Vert A_{F_{{\rm ind}}} \Vert \le \Vert A_G \Vert$
by standard arguments
(details in \cite[proof of Proposition 3.1]{BrHN}),
so that $\Vert A_F \Vert \le 2$.
\par

Computations with finite graphs show
we would have $\Vert A_F \Vert > 2$
if $F$ was a connected finite graph containing strictly one of
$\widetilde A_n$ ($n \ge 2$),
$\widetilde D_n$ ($n \ge 4$),
$\widetilde E_n$ ($n = 6,7,8$),
and this is not possible.
Here $\widetilde A_n$ denotes the cycle with $n+1$ vertices,
$\widetilde D_n$ the graph obtained
from a segment with vertices $v_1, \hdots, v_{n-1}$
and edges $\{v_j, v_{j+1}\}$ ($1 \le j \le n-2$)
by adding two vertices $v_0, v_n$
and two edges $\{v_0, v_2\}$, $\{v_{n-2}, v_n \}$,
and $\widetilde E_6, \widetilde E_7, \widetilde E_8$
the stars with respectively $7, 8, 9$ vertices
described in \cite{BrHa--12};
see Theorem 3.1.3 in this book.
It follows that $G$ is a tree,
because it does not contain strictly any $\widetilde A_n$
($n \ge 2$).
Also $G$ does not have vertices of degree $\ge 4$,
and $G$ has at most one vertex of degree $3$,
because it does not contain strictly any $\widetilde D_n$
($n \ge 4$).
And finally, if $G$ contains a vertex of degree $3$,
two of the segments starting from this vertex must be of length $1$, 
because it does not contain strictly any $\widetilde E_n$
($n=6,7,8$).
It follows that $G$ is isomorphic to one of $R, D_\infty, L$,
hence $\Vert A_G \Vert = 2$
and $\Sigma (A_G) = \mathopen[ -2, 2 \mathclose]$.
\par

Since $R, D_\infty$, and $L$ have different multiplicity functions,
each of them is determined by its marked spectrum among connected graphs.
\end{proof}

\begin{prop}
\label{propRischarac}
The infinite ray $R$ is determined by its marked spectrum.
\end{prop}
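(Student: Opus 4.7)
The plan is to take any graph $G$ of bounded degree with the marked spectrum of $R$---spectrum $\mathopen[ -2, 2 \mathclose]$, scalar-valued spectral measure equivalent to Lebesgue, and uniform multiplicity~$1$---and deduce $G \cong R$ by decomposing $G$ into connected components $G = \bigsqcup_i G_i$ and ruling out every configuration other than a single copy of~$R$. Since $G$ is of bounded degree, so is each $G_i$, and $A_G = \bigoplus_i A_{G_i}$ gives $\Vert A_{G_i} \Vert \le \Vert A_G \Vert = 2$ for every $i$. Hence by Proposition~\ref{propnormAis2} every infinite component is isomorphic to one of $R$, $D_\infty$, $L$, while every finite component is a self-adjoint finite matrix and so has at least one eigenvalue.

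Next I would rule out finite components and $D_\infty$-components by an atom argument. If $G_i$ is finite with eigenvalue $\lambda$, extending an eigenvector by zero yields an eigenvector of $A_G$ for $\lambda$; by Proposition~\ref{PropMO}\ref{2DEPropMO} this forces $\mu_G(\{\lambda\}) > 0$, contradicting $\mu_G \sim$ Lebesgue. Similarly a $D_\infty$-component would give $0$ as an eigenvalue of $A_G$ (Proposition~\ref{PropSpecD}) and hence an atom at $0$, again excluded. Therefore every component of $G$ is isomorphic either to $R$ or to $L$.

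Finally I would exploit the additivity of the spectral multiplicity function under orthogonal direct sums. Applying the Hahn--Hellinger theorem (Theorem~\ref{mainthHH}) component by component, each $A_{G_i}$ is unitarily equivalent to $M_{\mathopen[ -2, 2 \mathclose],\, \mu_i,\, \mathfrak m_i}$ with $\mu_i$ equivalent to Lebesgue and with $\mathfrak m_i \equiv 1$ or $\mathfrak m_i \equiv 2$ according as $G_i \cong R$ or $G_i \cong L$. Assembling the summands on a common finite measure equivalent to $\sum_i \mu_i$ (still equivalent to Lebesgue on $\mathopen[ -2, 2 \mathclose]$) shows that $\mathfrak m_G \equiv n_R + 2 n_L$ almost everywhere, where $n_R, n_L \in \N \cup \{\infty\}$ count the components of each type. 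The hypothesis $\mathfrak m_G \equiv 1$ then forces $n_R = 1$ and $n_L = 0$, whence $G \cong R$.

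The step I expect to be most delicate is the additivity formula $\mathfrak m_G(x) = \sum_i \mathfrak m_{G_i}(x)$, in particular when there are countably infinitely many components: this requires placing all the individual scalar-valued spectral measures on a common ambient measure and then stacking the fibres in the Hahn--Hellinger normal form. Everything else in the argument follows directly from the atomic propositions already collected earlier in the paper.
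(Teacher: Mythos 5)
Your proposal is correct and follows essentially the same route as the paper: decompose $G$ into connected components, exclude finite and $D_\infty$ components because an eigenvalue would force an atom in the scalar-valued spectral measure, invoke Proposition~\ref{propnormAis2} to identify the remaining components, and use multiplicity one to exclude $L$-components and more than one $R$-component. The only difference is that you spell out the additivity of the spectral multiplicity function over the direct sum, a detail the paper leaves implicit, and your justification of it (all components' scalar measures being equivalent to Lebesgue on $\mathopen[-2,2\mathclose]$, so the fibres simply stack) is sound.
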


\begin{proof}
Let $G$ be a graph of bounded degree with the same marked spectrum
as that of $R$.
Let $(G_i)_{i \in I}$ be the connected components of $G$.
Denote by $A_G$ the adjacency operator of $G$
and, for each $i \in I$, by $A_i$ that of $G_i$.
There cannot exist $i \in I$ with $G_i$ finite;
otherwise $\Sigma(A_i)$ would consist of eigenvalues,
and thus $\Sigma(A_G)$ would contain eigenvalues,
but this is impossible since $\Sigma(A_R)$ does not;
hence each $G_i$ is infinite.
By Proposition~\ref{propnormAis2},
each $G_i$ is isomorphic to one of $R, D_\infty$, or $L$;
but $D_\infty$ is impossible because $A_R$ does not have eigenvalue
and $L$ is impossible because $A_R$ has uniform spectral multiplicity $1$;
hence each $G_i$ is isomorphic to $R$.
The graph $G$ cannot be the union of $2$ or more
connected components isomorphic to $R$,
again because $A_R$ has uniform spectral multiplicity~$1$;
hence $G$ is isomorphic to $R$.
\end{proof}

Note that the infinite line $L$ is not characterized by its marked spectrum.
Indeed, the adjacency operator $A_L$ and the adjacency operator
of a graph with two connected components isomorphic to $R$
are unitarily equivalent,
as it follows from Corollary~\ref{CorRefUnitEq}
and Propositions~\ref{infray} \&~\ref{infline}.

It is natural to ask whether there are other infinite connected graphs $G$
with $\Vert A_G \Vert < \sqrt{ 2 + \sqrt 5 } \sim 2.058$
which are characterized by their marked spectrum
among connected graphs; see \cite{BrNe--89}.
The range $\sqrt{ 2 + \sqrt 5 } \le \Vert A_G \Vert \le \frac{3}{2} \sqrt 2 \sim 2.121$
could also be investigated \cite{WoNe--07}.

\section{\textbf{Spherically symmetric infinite rooted trees}}
\label{SectionSSRTree}

Let $T = (V,E)$ be a spherically symmetric rooted tree,
of bounded degree and without leaves,
and let $A_T$ be its adjacency operator.
The main technical result of this section is Proposition~\ref{PropAeqJ},
showing an orthogonal decomposition of $\ell^2(V)$
in subspaces invariant by $A_T$
on each of which $A_T$ is an infinite Jacobi matrix.
This is standard, it has been used for trees as here and in other contexts;
see \cite{RoRu--95},
\cite[Lemma 1]{AlFr--00},
\cite[Theorem 3.2]{Solo--04},
\cite[Theorem 2.4]{Breu--07}. 
%
\par

Let $T = (V, E)$ be a tree.
Choose a root $v_0 \in V$.
For $v \in V$, denote by $\vert v \vert$ the distance from $v$ to $v_0$.
For an integer $r \ge 0$, let $S_r = \{ v \in V : \vert v \vert = r \}$
be the sphere in $V$ of radius $r$ around $v_0$.
For $v \in V$, denote by $N^+_v$ the set of neighboring vertices of $V$
at distance $\vert v \vert + 1$ from $v_0$.
For $v \in V$ different from $v_0$,
denote by $v_-$ the neighboring vertex of $v$
at distance $\vert v \vert - 1$ from $v_0$;
note that, for $v \ne v_0$, the set of neighbors of $v$ is $\{ v_- \} \cup N^+_v$,
and therefore the degree of $v$ is $\deg (v) = 1 + \vert N^+_v \vert$.
The set of neighbors of $v_0$ is $N^+_{v_0} = S_1$.
\par

The infinite rooted tree $T$ is \textbf{spherically symmetric} if,
for every $r \ge 0$, every vertex in $S_r$
has exactly $d_r \ge 1$ adjacent vertices in $S_{r+1}$,
for some sequence $(d_r)_{r \ge 0}$ of positive integers,
the sequence of \textbf{branching degrees} of $T$.
From now on,
we consider an infinite spherically symmetric rooted tree $T$
of bounded degree,
with sequence of branching degrees such that
\begin{equation}
\label{eqsequencedegreesT}
d_r \ge 2
\hskip.2cm \text{for all} \hskip.2cm
r \ge 0
\hskip.5cm \text{and} \hskip.5cm
\sup_r d_r < \infty .
\end{equation}
For $r \ge 0$, we identify $\ell^2(S_r)$ with the subspace of $\ell^2 (V)$
of functions which vanish on $V \smallsetminus S_r$.
We set $\ell^2 (S_{-1}) = \{0\}$.
Define an operator $H$ on $\ell^2(V)$ by
\begin{equation}
\label{eqdefofH}
(H \xi) (v) = \xi(v_-)
\hskip.2cm \text{if} \hskip.2cm
\vert v \vert \ge 1 
\hskip.5cm \text{and} \hskip.5cm
(H \xi) (v_0) = 0
\hskip.5cm \text{for all} \hskip.2cm
\xi \in \ell^2(V) .
\end{equation}

\begin{prop}
\label{PropHandH*}
Let $T = (V, E)$ be a spherically symmetric infinite rooted tree
with root $v_0 \in V$, 
and with sequence of branching degrees $(d_r)_{r \ge 0}$
such that Condition~\eqref{eqsequencedegreesT} holds.
Let $A_T$ and $H$ be as above.
\begin{enumerate}[label=(\arabic*)]
\item\label{1DEPropHandH*}
The operator $H$ is bounded on $\ell^2(V)$
of norm $\sqrt{ \max_{r \ge 0} d_r }$,
and is injective.
\item\label{2DEPropHandH*}
The adjoint $H^*$ of $H$ is given by
\begin{equation}
\label{eqdefH*}
(H^* \xi) (v) = \sum_{w \in N^+_v} \xi (w)
\hskip.5cm \text{for all} \hskip.2cm
\xi \in \ell^2(V)
\hskip.2cm \text{and} \hskip.2cm
v \in V ,
\end{equation}
and we have
$$
A_T = H + H^* .
$$
\item\label{3DEPropHandH*}
For all $r \ge 0$:
\begin{enumerate}
\item[$\circ$]
the restriction $\frac{1}{ \sqrt{d_r}} H \big\vert_{\ell^2(S_r)}$
is an isometry from $\ell^2(S_r)$ into $\ell^2(S_{r+1})$
\item[]
and $\frac{1}{ d_r } H^*H \big\vert_{\ell^2(S_r)} = {\rm Id}_{\ell^2(S_r)}$,
\item[$\circ$]
$H^*(\ell^2(S_r)) = \ell^2(S_{r-1})$
and $HH^*(\ell^2(S_r)) \subset \ell^2(S_r)$.
\end{enumerate}
\item\label{4DEPropHandH*}
Let $r \ge 0$ and $k \ge 0$.
If $\xi$ and $\eta$ in $\ell^2( S_r )$ are orthogonal,
then $H^k \xi$ and $H^k \eta$ in $\ell^2( S_{r+k} )$ are also orthogonal.
\end{enumerate}
\end{prop}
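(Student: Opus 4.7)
The plan is to work through the four claims by direct calculation with the definition of $H$, systematically exploiting the spherical symmetry of $T$. The central observation, which gives both~\ref{1DEPropHandH*} and the first item of~\ref{3DEPropHandH*} at once, is the identity
$$
\Vert H \xi \Vert^2 \hskip.1cm = \sum_{v \in V, \hskip.1cm \vert v \vert \ge 1} \vert \xi(v_-) \vert^2 \hskip.1cm = \sum_{r \ge 0} d_r \sum_{u \in S_r} \vert \xi(u) \vert^2 ,
$$
which follows from the fact that, for each $u \in S_r$, there are exactly $d_r$ vertices $v \in S_{r+1}$ with $v_- = u$. From this, $\Vert H \Vert^2 \le \max_r d_r$, and equality is attained on $\delta_u$ for any $u$ lying in a sphere $S_r$ where $d_r$ achieves its maximum (the supremum is indeed a maximum, since the $d_r$ are positive integers bounded above). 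Injectivity follows from $H\xi = 0 \Rightarrow \xi(v_-) = 0$ for every $v$ with $\vert v \vert \ge 1$, combined with the fact that every vertex of $V$ is of the form $v_-$ for some child, thanks to $d_r \ge 1$.

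For~\ref{2DEPropHandH*}, I would compute $\langle H\xi \sca \eta \rangle$ by the change of summation variable $v \mapsto u = v_-$, obtaining $\sum_u \xi(u) \overline{\sum_{w \in N^+_u} \eta(w)}$, from which the formula~\eqref{eqdefH*} for $H^*$ is read off. Then $A_T = H + H^*$ is verified vertex by vertex: at $v \ne v_0$ the set of neighbors of $v$ is $\{ v_- \} \cup N^+_v$, while at $v_0$ the neighbors form exactly $N^+_{v_0} = S_1$, in agreement with the convention $(H\xi)(v_0) = 0$.

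The remaining items of~\ref{3DEPropHandH*} are immediate consequences of what precedes. Restricting the main identity to $\xi \in \ell^2(S_r)$ gives $\Vert H\xi \Vert^2 = d_r \Vert \xi \Vert^2$, which simultaneously shows that $\frac{1}{\sqrt{d_r}} H \big\vert_{\ell^2(S_r)}$ is an isometry into $\ell^2(S_{r+1})$ and, after polarization, that $H^*H \big\vert_{\ell^2(S_r)} = d_r \hskip.1cm {\rm Id}$. The inclusion $H^*(\ell^2(S_r)) \subset \ell^2(S_{r-1})$ is then read off from~\eqref{eqdefH*}, since $(H^*\xi)(v) = \sum_{w \in N^+_v \cap S_r} \xi(w)$ is nonzero only when $v \in S_{r-1}$; surjectivity onto $\ell^2(S_{r-1})$ is witnessed by the explicit lift $\xi(w) = \eta(w_-)/d_{r-1}$ for $w \in S_r$, which satisfies $H^*\xi = \eta$. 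Finally $HH^*(\ell^2(S_r)) \subset H(\ell^2(S_{r-1})) \subset \ell^2(S_r)$ by concatenation.

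For~\ref{4DEPropHandH*}, the plan is to iterate the identity $H^*H \big\vert_{\ell^2(S_s)} = d_s \hskip.1cm {\rm Id}$ layer by layer: for $\xi \in \ell^2(S_r)$, a straightforward induction using $H^k\xi \in \ell^2(S_{r+k})$ gives
$$
(H^*)^k H^k \xi \hskip.1cm = \hskip.1cm \bigl( d_r d_{r+1} \cdots d_{r+k-1} \bigr) \xi ,
$$
whence $\langle H^k \xi \sca H^k \eta \rangle = (d_r d_{r+1} \cdots d_{r+k-1}) \langle \xi \sca \eta \rangle = 0$ whenever $\xi$ and $\eta$ are orthogonal in $\ell^2(S_r)$. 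No step presents a real obstacle; the mildly delicate point is keeping track of the factors $d_r$ and, in~\ref{3DEPropHandH*}, producing the explicit preimage under $H^*$ to get surjectivity onto $\ell^2(S_{r-1})$ rather than just the inclusion.
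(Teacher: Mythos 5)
Your proposal is correct and follows essentially the same route as the paper: the key identity $\Vert H\xi \Vert^2 = \sum_r d_r \sum_{u \in S_r} \vert \xi(u) \vert^2$, the adjoint computed by reindexing the sum, the isometry and $H^*H\vert_{\ell^2(S_r)} = d_r \, \mathrm{Id}$ on each sphere, and orthogonality propagated by iterating that identity. The only cosmetic differences (attaining the norm on a $\delta_u$, polarization instead of a direct computation of $H^*H\xi$, an explicit lift for surjectivity of $H^*$) do not change the argument.
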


\begin{proof}
\ref{1DEPropHandH*}
Let $\xi \in \ell^2(V)$. We have
$$
\begin{aligned}
\Vert H \xi \Vert^2
& = \sum_{v \in V} \vert (H \xi) (v) \vert^2
= \sum_{v \in V, v \ne v_0} \vert \xi (v_-) \vert^2
= \sum_{w \in V} d_{\vert w \vert} \hskip.1cm \vert \xi (w) \vert^2
\\
& \le \big( \max_{r \ge 0} d_r \big) \sum_{w \in V} \vert \xi (w) \vert^2
= \big( \max_{r \ge 0} d_r \big) \Vert \xi \Vert^2 ,
\end{aligned}
$$ 
hence $\Vert H \Vert \le \sqrt{ \max_{r \ge 0} d_r }$.
For the equality, see the end of~\ref{3DEPropHandH*} below.
\par

If $H\xi = 0$, i.e., if $\xi(v_-) = 0$
for all $v \in V \smallsetminus \{ v_0 \}$, then $\xi = 0$,
hence $H$ is injective.

\vskip.2cm

\ref{2DEPropHandH*}
We use temporarily Formula~\eqref{eqdefH*}
as a definition of $H^*$.
Then $H^*$ is bounded;
indeed, using the Cauchy--Schwarz inequality,
we have for all $\xi \in \ell^2(V)$
$$
\begin{aligned}
\sum_{v \in V} \vert (H^* \xi) (v) \vert^2
& = \sum_{v \in V} \Big\vert \sum_{w \in N^+_v} \xi (w) \Big\vert^2
\le \sum_{v \in V} d_{\vert v \vert} \sum_{w \in N^+_v} \vert \xi (w) \vert^2
\\
& = \sum_{w \in V, w \ne v_0} d_{\vert w \vert - 1} \vert \xi (w) \vert^2
\le \big( \max_{r \ge 0} d_r \big) \Vert \xi \Vert^2 .
\end{aligned}
$$
And $H^*$ is the adjoint of $H$ because,
for $\xi, \eta \in \ell^2(V)$, we have
$$
\begin{aligned}
\langle H^* \xi \sca \eta \rangle
& = \sum_{v \in V} (H^*\xi)(v) \overline{\eta(v)}
= \sum_{v \in V} \sum_{w \in N^+_v} \xi(w) \overline{ \eta (v) }
\\
& = \sum_{w \ne v_0} \xi(w) \overline{ \eta (w_-) }
= \sum_{w \ne v_0} \xi(w) \overline{ (H \eta) (w) }
= \langle \xi \sca H \eta \rangle .
\end{aligned} 
$$
The equality $A_T = H + H^*$ follows
from~\eqref{eqdefofH} and~\eqref{eqdefH*}.

\vskip.2cm

\ref{3DEPropHandH*}
Let $\xi \in \ell^2(S_r)$. It is obvious that $H\xi \in \ell^2(S_{r+1})$.
Moreover, the computation of the proof of~\ref{1DEPropHandH*}
continues as
$$
\Vert H \xi \Vert^2
= \sum_{w \in V} d_{\vert w \vert} \hskip.1cm \vert \xi (w) \vert^2
= d_r \sum_{w \in S_r} \vert \xi (w) \vert^2
= d_r \Vert \xi \Vert^2 ,
$$
hence $\frac{1}{ \sqrt{d_r}} H \big\vert_{\ell^2(S_r)}$
is an isometry from $\ell^2(S_r)$ into $\ell^2(S_{r+1})$.
We have also
$$
(H^*H\xi) (v) = \sum_{w \in N^+_v} (H\xi) (w) = d_r \xi (v)
\hskip.5cm \text{for all} \hskip.2cm
v \in V ,
$$
hence
\begin{equation}
\label{eqH*H}
\frac{1}{ d_r } H^*H \big\vert_{\ell^2(S_r)} = {\rm Id}_{\ell^2(S_r)} .
\end{equation}
It follows that $H^*$ maps $\ell^2(S_{r+1})$ onto $\ell^2(S_r)$,
and also that $\Vert H \Vert \ge \sqrt{d_r}$.
\par
It follows now that $\Vert H \Vert = \sqrt{ \max_{r \ge 0} d_r }$.

\vskip.2cm

\ref{4DEPropHandH*}
For $\xi$ and $\eta$ orthogonal in $\ell^2(S_r)$
we have, using Equality~\eqref{eqH*H},
$$
\langle H\xi \sca H\eta \rangle
= \langle H^*H \xi \sca \eta \rangle
= d_r \langle \xi \sca \eta \rangle = 0 ,
$$
so that $H\xi$ and $H\eta$ are orthogonal in $\ell^2(S_{r+1})$.
For $k \ge 2$, the same argument repeated $k$ times
shows that $H^k \xi$ and $H^k \eta$ are orthogonal.
\end{proof}

Set
$$
\Ui_{0,0} = \ell^2(S_0)
\hskip.5cm \text{and} \hskip.5cm
\Ui_{0, r} = H^r (\Ui_{0, 0})
\hskip.5cm \text{for each integer} \hskip.2cm
r \ge 0.
$$
Note that $\Ui_{0, r}$ is the one-dimensional subspace of $\ell^2(V)$
of functions on $V$ which vanish outside $S_r$
and which are constant on $S_r$.
Set
$$
\Vi_0 = \bigoplus_{r = 0}^\infty \Ui_{0, r} ,
$$
which is the subspace of $\ell^2(V)$
of functions which are constant on each sphere. 
\par

We define now subspaces $\Ui_{n, r}$ and $\Vi_n$
for $n \ge 1$ and $r \ge n$, by induction on~$n$.
Let $n \ge 1$; assume that $\Ui_{m, q}$ has already been defined
when $0 \le m < n$ and $q \ge m$.
Define
$$
\begin{aligned}
\Ui_{n, n} &= \text{
orthogonal complement
of $\Ui_{0, n} \oplus \Ui_{1, n} \oplus \cdots \oplus \Ui_{n-1, n}$
in $\ell^2(S_n)$,
}
\\
\Ui_{n,r} &= H^{r-n} (\Ui_{n, n})
\hskip.2cm \text{in} \hskip.2cm
\ell^2(S_r)
\hskip.2cm \text{for all} \hskip.2cm
r \ge n ,
\\
\Vi_n &= \bigoplus_{r = n}^\infty \Ui_{n, r} .
\end{aligned}
$$
Observe that
\begin{equation}
\label{eqVSU}
\ell^2(V) = \bigoplus_{r=0}^\infty \ell^2(S_r)
\hskip.5cm \text{and} \hskip.5cm
\ell^2(S_r) = \bigoplus_{n=0}^r \Ui_{n,r}
\hskip.5cm \text{for all} \hskip.2cm
r \ge 0 .
\end{equation}

\begin{prop}
\label{PropDirSum}
Let the notation be as above.
There are orthogonal direct sums decompositions
$$
\ell^2(V) = \bigoplus_{n=0}^\infty \Vi_n
= \bigoplus_{n=0}^\infty \bigoplus_{r=n}^\infty \Ui_{n, r} .
$$
For each $n \ge 0$, the subspace $\Vi_n$ of $\ell^2(V)$
is invariant by $H$, $H^*$, and $A_T$.
\end{prop}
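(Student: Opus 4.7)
The plan is to establish the two orthogonal decompositions first, then deduce invariance.

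First I would prove by induction on $r \ge 0$ that $\ell^2(S_r) = \bigoplus_{n=0}^r \Ui_{n,r}$ is an orthogonal direct sum. The base case $r=0$ is tautological since $\Ui_{0,0} = \ell^2(S_0)$. For the inductive step, observe that $H$ restricted to $\ell^2(S_{r-1})$ is $\sqrt{d_{r-1}}$ times an isometry into $\ell^2(S_r)$ by part~\ref{3DEPropHandH*} of Proposition~\ref{PropHandH*}, so $H(\ell^2(S_{r-1}))$ is a closed subspace of $\ell^2(S_r)$. Applying the induction hypothesis and using part~\ref{4DEPropHandH*} of that proposition (which says $H$ preserves orthogonality within a sphere), the image $H(\ell^2(S_{r-1}))$ splits as the orthogonal sum $\bigoplus_{m=0}^{r-1} H(\Ui_{m,r-1}) = \bigoplus_{m=0}^{r-1} \Ui_{m,r}$, and the definition of $\Ui_{r,r}$ as the orthogonal complement closes the induction. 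Combining $\ell^2(V) = \bigoplus_{r \ge 0} \ell^2(S_r)$ from~\eqref{eqVSU} with this sphere-level decomposition and rearranging the resulting pairwise orthogonal family $(\Ui_{n,r})_{0 \le n \le r}$ by grouping on $n$ rather than $r$ yields both claimed decompositions at once.

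Next, for invariance under $H$, the very definition $\Ui_{n,r+1} = H(\Ui_{n,r})$ for $r \ge n$ gives $H(\Vi_n) \subset \Vi_n$. The main point is invariance under $H^*$. The key observation is that $\Ui_{n,n}$, being by definition the orthogonal complement in $\ell^2(S_n)$ of $\bigoplus_{m<n} \Ui_{m,n} = H(\ell^2(S_{n-1}))$, coincides with $\ker\!\left(H^*\big\vert_{\ell^2(S_n)}\right)$: indeed, for $\xi \in \ell^2(S_n)$, the condition $\langle \xi \sca H\eta \rangle = 0$ for every $\eta \in \ell^2(S_{n-1})$ is equivalent to $H^*\xi = 0$, since $H^*\xi$ already lies in $\ell^2(S_{n-1})$ by part~\ref{3DEPropHandH*}. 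Hence $H^*(\Ui_{n,n}) = \{0\}$. For $r > n$, I would write $\Ui_{n,r} = H(\Ui_{n,r-1})$ and apply the identity $H^*H\big\vert_{\ell^2(S_{r-1})} = d_{r-1}\,{\rm Id}$ from~\eqref{eqH*H} to obtain $H^*(\Ui_{n,r}) = d_{r-1}\Ui_{n,r-1} = \Ui_{n,r-1} \subset \Vi_n$. Thus $H^*(\Vi_n) \subset \Vi_n$, and invariance under $A_T = H + H^*$ follows.

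The step I expect to require most care is the orthogonality assertion $\Ui_{n,r} \perp \Ui_{m,r}$ in $\ell^2(S_r)$ for $m < n < r$, since the construction only builds in orthogonality at level $r = n$. The remedy is to write $\Ui_{n,r} = H^{r-n}(\Ui_{n,n})$ and $\Ui_{m,r} = H^{r-n}(\Ui_{m,n})$ (using $H^{r-n} \circ H^{n-m} = H^{r-m}$), and then invoke part~\ref{4DEPropHandH*} of Proposition~\ref{PropHandH*} iteratively to propagate orthogonality from level $n$ up to level $r$. Once this is in hand, everything else reduces to bookkeeping with the operators $H$ and $H^*$ and the identities of Proposition~\ref{PropHandH*}.
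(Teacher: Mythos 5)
Your proposal is correct and follows essentially the same route as the paper: orthogonality of the $\Ui_{n,r}$ is propagated from level $\max(m,n)$ up the spheres via part~\ref{4DEPropHandH*} of Proposition~\ref{PropHandH*}, invariance under $H$ is by construction, and invariance under $H^*$ splits into the case $r>n$ (using $H^*H\big\vert_{\ell^2(S_{r-1})} = d_{r-1}\,{\rm Id}$) and the case $r=n$ (showing $H^*$ kills $\Ui_{n,n}$ because it is orthogonal to $H(\ell^2(S_{n-1}))$). The only cosmetic difference is that you prove the sphere-level decomposition $\ell^2(S_r) = \bigoplus_{n=0}^r \Ui_{n,r}$ by induction, whereas the paper records it as the observation~\eqref{eqVSU}; this is a welcome amplification rather than a different argument.
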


\begin{proof}
We continue to follow \cite{AlFr--00}.
\par

We first check that the direct sums are orthogonal.
Let $n_1, r, s$ be nonnegative integers such that
$r \ne s$ and $0 \le n_1 \le \min\{r,s\}$.
The spaces $\Ui_{n_1,r}$ and $\Ui_{n_1,s}$ are orthogonal,
because they are respectively subspaces of $\ell^2(S_r)$ and $\ell^2(S_s)$
which are orthogonal.
It follows that $\Vi_{n_1} = \bigoplus_{r = n_1}^\infty \Ui_{n_1, r}$
is an orthogonal sum.
Let moreover $n_2$ be an integer such that $n_2 > n_1$.
The spaces $\Ui_{n_1,n_2}$ and $\Ui_{n_2,n_2}$
are orthogonal by definition of $\Ui_{n_2,n_2}$. 
By~\ref{4DEPropHandH*} of Proposition~\ref{PropHandH*},
the spaces $\Ui_{n_1, r} = H^{r-n_2}(\Ui_{n_1, n_2})$
and $\Ui_{n_2, r} = H^{r-n_2}(\Ui_{n_2, n_2})$
are orthogonal whenever $r \ge n_2$.
It follows that $\Vi_{n_1} = \bigoplus_{r = n_1}^\infty \Ui_{n_1, r}$
and $\Vi_{n_2} = \bigoplus_{r = n_2}^\infty \Ui_{n_2, r}$
are orthogonal,
and therefore that $\ell^2(V) = \bigoplus_{n=0}^\infty \Vi_r$
is an orthogonal sum.
\par

By definition, each $\Vi_n$ is invariant by $H$.
It remains to show that each $\Vi_n$ is also invariant by $H^*$,
i.e., that $H^*(\Ui_{n,r}) \subset \Vi_n$ for all $r \ge n$.

\vskip.2cm

Let $\xi \in \Ui_{n, r}$ for some $n$ and $r$ such that $0 \le n \le r$;
we distinguish three cases.
\par

Assume first that $r > n$.
There exists $\eta \in \Ui_{n, n}$ such that $\xi = H^{r-n} \eta$.
Then $H^* \xi = (H^*H) (H^{r-n-1} \eta) = d_{r-1} H^{r-n-1} \eta$
by~\ref{3DEPropHandH*} of Proposition~\ref{PropHandH*},
hence $H^* \xi \in \Ui_{n, r-1} \subset \Vi_n$.
\par

Assume now that $r = n \ge 1$.
Then $H^* \xi \in \ell^2 (S_{n-1})$.
We claim that $H^* \xi = 0$.
Indeed, choose $\ell \in \{0, 1, \hdots, n-1\}$
and $\zeta \in \Ui_{\ell, n-1}$.
Then $H \zeta \in \Ui_{\ell, n}$ and $\xi \in \Ui_{n, n}$ are orthogonal
(because $\ell < n$),
so that $\langle H^* \xi \sca \zeta \rangle = \langle \xi \sca H \zeta \rangle = 0$;
hence $H^* \xi$ is orthogonal to $\Ui_{\ell, n-1}$ for each $\ell \le n-1$,
i.e., $H^* \xi$ is orthogonal to $\ell^2(S_{n-1})$, i.e., $H^* \xi = 0$.
\par

Assume finally that $r = n = 0$; then $H^* \xi = 0$.
This shows that $H^* \xi \in \Vi_n$ in all cases.
\end{proof}

The next proposition is now straightforward:

\begin{prop}
\label{PropHandH*suite}
With the notation as above, we have
\begin{enumerate}[label=(\arabic*)]
\item\label{5DEPropHandH*suite}
$\dim \ell^2(S_n) = \vert S_n \vert = \prod_{q=0}^{n-1} d_q$ for all $n \ge 0$,
\item\label{6DEPropHandH*suite}
$\dim \Ui_{n,r} = \Big( \prod_{q=0}^{n-2} d_q \Big) (d_{n-1} - 1)$
for all $n \ge 2$ and $r \ge n$,
\newline
and $\dim \Ui_{1,r} = d_0 - 1$ for all $r \ge 1$,
and $\dim \Ui_{0,r} = 1$ for all $r \ge 0$,
\item\label{7DEPropHandH*suite}
$\dim \Vi_n = \infty$ for all $n \ge 0$.
\end{enumerate}
\end{prop}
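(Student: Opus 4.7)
The plan is to deduce all three claims from the structural results already established in Propositions~\ref{PropHandH*} and~\ref{PropDirSum}, together with the orthogonal decompositions in~\eqref{eqVSU}. Claim~\ref{5DEPropHandH*suite} should follow by a direct induction on $n$ using spherical symmetry: the root gives $|S_0|=1$, and each vertex of $S_r$ has exactly $d_r$ children in $S_{r+1}$, so $|S_{r+1}| = d_r |S_r|$, whence the product formula.

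For claim~\ref{6DEPropHandH*suite}, my first observation is that by part~\ref{3DEPropHandH*} of Proposition~\ref{PropHandH*} the map $\frac{1}{\sqrt{d_r}} H|_{\ell^2(S_r)}$ is an isometry into $\ell^2(S_{r+1})$; in particular $H$ is injective on every sphere. Iterating, $H^{r-n}$ restricted to $\Ui_{n,n}$ is (a scalar multiple of) an isometry onto $\Ui_{n,r}$, so $\dim \Ui_{n,r} = \dim \Ui_{n,n}$ for all $r \ge n$ and it suffices to compute $\dim \Ui_{n,n}$. This I would do by strong induction on $n$, with base cases $n = 0$ (trivial, since $\Ui_{0,0} = \ell^2(S_0)$ is one-dimensional) and $n = 1$ (then $\Ui_{1,1}$ is the orthogonal complement in $\ell^2(S_1)$ of the one-dimensional $\Ui_{0,1}$, which has dimension $d_0 - 1$). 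For the inductive step I would use the identity
$$
\dim \Ui_{n,n} \, = \, \dim \ell^2(S_n) - \sum_{m=0}^{n-1} \dim \Ui_{m,n}
$$
provided by~\eqref{eqVSU}, together with the inductive hypothesis $\dim \Ui_{m,n} = \dim \Ui_{m,m}$ for $m < n$ and with claim~\ref{5DEPropHandH*suite}.

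The only computational step will be to check, by a secondary induction on $n$, that the sum $\sum_{m=0}^{n-1} \dim \Ui_{m,n}$ telescopes to $\prod_{q=0}^{n-2} d_q$, so that the difference above becomes $\big(\prod_{q=0}^{n-2} d_q\big)(d_{n-1} - 1)$, as required. This is the closest thing to an obstacle here, and it is purely arithmetic.

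Finally, claim~\ref{7DEPropHandH*suite} will be immediate from claim~\ref{6DEPropHandH*suite}: since $\Vi_n = \bigoplus_{r \ge n} \Ui_{n,r}$ is an orthogonal sum of infinitely many subspaces, and since hypothesis~\eqref{eqsequencedegreesT} forces $d_q \ge 2$ for every $q$ (so $\dim \Ui_{n,n} \ge 1$ in all cases, using the formula from claim~\ref{6DEPropHandH*suite} for $n \ge 1$ and the explicit value $1$ for $n=0$), the dimension of $\Vi_n$ is infinite.
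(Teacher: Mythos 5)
Your proposal is correct, and it takes the only natural route: the paper itself offers no proof (it simply declares the proposition ``straightforward'' after Propositions~\ref{PropHandH*}--\ref{PropMnIsEll2}), and your argument --- the induction $\vert S_{r+1} \vert = d_r \vert S_r \vert$, the equality $\dim \Ui_{n,r} = \dim \Ui_{n,n}$ via the scaled isometry $H^{r-n}$, the dimension count $\dim \Ui_{n,n} = \prod_{q=0}^{n-1} d_q - \prod_{q=0}^{n-2} d_q$ from~\eqref{eqVSU}, and the infinitude of $\Vi_n$ from $d_q \ge 2$ --- is exactly the routine verification the author leaves to the reader. The telescoping identity you flag as the ``only obstacle'' does check out: writing $P_n = \prod_{q=0}^{n-1} d_q$ one gets $\sum_{m=0}^{n} \dim \Ui_{m,m} = P_n$ by induction, hence $\dim \Ui_{n,n} = P_n - P_{n-1}$.
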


Let $n \ge 0$.
Denote by $\ell^2(\N, \Ui_{n, n})$ the Hilbert space of
sequences $(\xi_j)_{j \ge 0}$ of vectors in $\Ui_{n, n}$
such that $\sum_{j=0}^\infty \Vert \xi_j \Vert^2 < \infty$.
For all $j \ge 0$,
by~\ref{3DEPropHandH*} of Proposition~\ref{PropHandH*}
and by definition of $\Ui_{n, n+j}$,
the operator
$$
\frac{1}{ \sqrt{ \prod_{q=n}^{n+j-1} d_q } } \hskip.1cm H^j \, \colon
\Ui_{n,n} \to \Ui_{n, n+j}
$$
is a surjective isometry.
\par

Let $\xi \in \Vi_n$.
For all $j \ge 0$,
there exists $\xi_{n+j} \in \Ui_{n, n+j}$,
and therefore $\chi_{n+j} \in \Ui_{n,n}$,
such that
\begin{equation}
\label{Seq}
\xi = \big( \xi_{n+j} \big)_{j \ge 0}
\hskip.5cm \text{with} \hskip.5cm
\xi_{n+j} =
\frac{1}{ \sqrt{ \prod_{q=n}^{n+j-1} d_q } } \hskip.1cm H^j \chi_{n+j}
\hskip.5cm \text{for all} \hskip.2cm
j \ge 0 .
\end{equation}
Note that $\Vert \xi_{n,j} \Vert = \Vert \chi_{n,j} \Vert$.
We have shown:

\begin{prop}
\label{PropMnIsEll2}
Let the notation be as above. For any $n \ge 0$,
the operator
$$
W_n \, \colon \Vi_n \to \ell^2( \N, \Ui_{n,n} )
\hskip.5cm \text{defined by} \hskip.5cm
W_n \big( ( \xi_{n+j} )_{j \ge 0} \big) = ( \chi_{n+j} )_{j \ge 0}
$$
is a surjective isometry, and
$W_n^* \big( ( \chi_{n+j} )_{j \ge 0} \big) = ( \xi_{n+j} )_{j \ge 0}$.
\end{prop}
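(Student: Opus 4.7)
The plan is to verify each assertion directly from the structure already established. First I would note that Proposition~\ref{PropDirSum} gives the orthogonal decomposition $\Vi_n = \bigoplus_{r=n}^\infty \Ui_{n,r}$, so every vector $\xi \in \Vi_n$ has a unique expansion $\xi = \sum_{j \ge 0} \xi_{n+j}$ with $\xi_{n+j} \in \Ui_{n, n+j}$ and $\sum_{j \ge 0} \Vert \xi_{n+j} \Vert^2 = \Vert \xi \Vert^2 < \infty$. Then I would invoke the fact, recalled just before the statement, that for each $j \ge 0$ the operator
$$
\frac{1}{ \sqrt{ \prod_{q=n}^{n+j-1} d_q } } \hskip.1cm H^j \, \colon \Ui_{n,n} \to \Ui_{n, n+j}
$$
is a surjective isometry. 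This implies there is a unique $\chi_{n+j} \in \Ui_{n,n}$ satisfying~\eqref{Seq}, and $\Vert \chi_{n+j} \Vert = \Vert \xi_{n+j} \Vert$. Hence $W_n$ is well-defined as a map from $\Vi_n$ to $\ell^2(\N, \Ui_{n,n})$, and the equality
$$
\Vert W_n \xi \Vert^2 = \sum_{j \ge 0} \Vert \chi_{n+j} \Vert^2 = \sum_{j \ge 0} \Vert \xi_{n+j} \Vert^2 = \Vert \xi \Vert^2
$$
shows that it is an isometry.

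Next I would verify surjectivity: given any sequence $(\chi_{n+j})_{j \ge 0} \in \ell^2(\N, \Ui_{n,n})$, the formula~\eqref{Seq} defines vectors $\xi_{n+j} \in \Ui_{n, n+j}$ with $\Vert \xi_{n+j} \Vert = \Vert \chi_{n+j} \Vert$, so $\sum_{j \ge 0} \Vert \xi_{n+j} \Vert^2 < \infty$ and $\xi = \sum_{j \ge 0} \xi_{n+j}$ is a well-defined element of $\Vi_n$ with $W_n \xi = (\chi_{n+j})_{j \ge 0}$. Consequently $W_n$ is a surjective isometry.

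Finally, since a surjective isometry between Hilbert spaces is a unitary operator, we have $W_n^* = W_n^{-1}$; the inverse is manifestly the map $(\chi_{n+j})_{j \ge 0} \mapsto (\xi_{n+j})_{j \ge 0}$, which gives the stated formula for $W_n^*$. There is no real obstacle here: the proposition is essentially a repackaging of the orthogonal decomposition of $\Vi_n$ together with the fact that the normalized iterates of $H$ identify $\Ui_{n,n}$ isometrically with each $\Ui_{n, n+j}$.
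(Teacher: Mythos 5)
Your proof is correct and follows essentially the same route as the paper, which simply presents the decomposition of $\Vi_n$, the normalized isometries $H^j \colon \Ui_{n,n} \to \Ui_{n,n+j}$, and the expansion~\eqref{Seq}, and then states the proposition with ``We have shown''. You have merely made explicit the routine verifications (well-definedness, isometry, surjectivity, and the adjoint formula via unitarity) that the paper leaves implicit.
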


Let $n \ge 0$.
We define the \textbf{weighted shift} $S_{\Ui, n}$ on $\ell^2(\N, \Ui_{n,n})$ by
$$
S_{\Ui, n} ( \chi_n, \chi_{n+1}, \chi_{n+2}, \chi_{n+3}, \hdots )
= (0, \sqrt{d_n} \chi_n, \sqrt{d_{n+1}} \chi_{n+1}, \sqrt{d_{n+2}} \chi_{n+2}, \hdots ) .
$$
The operator $S_{\Ui, n}$ is the direct sum of $\dim (\Ui_{n, n})$ copies
of the standard weighted shift $S_n$
defined on the usual sequence space $\ell^2(\N)$ by
\begin{equation}
\label{eqdefSn}
S_n( \lambda_0, \lambda_1, \lambda_2, \lambda_3, \hdots )
= (0, \sqrt{d_n} \lambda_0, \sqrt{d_{n+1}} \lambda_1, \sqrt{d_{n+2}} \lambda_2, \hdots ) .
\end{equation}

\begin{prop}
\label{PropHeqS}
With the notation as above, we have for all $n \ge 0$
$$
W_n H W_n^* = S_{\Ui, n} 
\hskip.5cm \text{and} \hskip.5cm
W_n H^* W_n^* = S_{\Ui, n}^* .
$$
\end{prop}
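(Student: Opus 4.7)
The plan is to verify the first equality $W_n H W_n^* = S_{\Ui, n}$ by direct computation on a generic sequence $(\chi_{n+j})_{j \ge 0} \in \ell^2(\N, \Ui_{n,n})$, then deduce the second equality by taking adjoints, using that $W_n$ is a surjective isometry (Proposition~\ref{PropMnIsEll2}) so that $W_n^* = W_n^{-1}$.

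For the first equality, I would start with an arbitrary $(\chi_{n+j})_{j \ge 0} \in \ell^2(\N, \Ui_{n,n})$ and unwind $W_n^*$ using~\eqref{Seq}: this produces the vector $\xi = (\xi_{n+j})_{j \ge 0} \in \Vi_n$ with
\[
\xi_{n+j} = \frac{1}{\sqrt{\prod_{q=n}^{n+j-1} d_q}} \, H^j \chi_{n+j} \in \Ui_{n, n+j} .
\]
Next I would apply $H$ termwise; since $H$ sends $\ell^2(S_r)$ into $\ell^2(S_{r+1})$ (part~\ref{3DEPropHandH*} of Proposition~\ref{PropHandH*}), the component of $H\xi$ in $\Ui_{n, n}$ is zero, and for $k \ge 1$ the component of $H\xi$ in $\Ui_{n, n+k}$ is $H\xi_{n+k-1} = \frac{1}{\sqrt{\prod_{q=n}^{n+k-2} d_q}} H^k \chi_{n+k-1}$.

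Then I would apply $W_n$: by definition of $W_n$, this component must be rewritten as $\frac{1}{\sqrt{\prod_{q=n}^{n+k-1} d_q}} H^k \tilde\chi_{n+k}$ for some $\tilde\chi_{n+k} \in \Ui_{n,n}$, and solving for $\tilde\chi_{n+k}$ yields $\tilde\chi_{n+k} = \sqrt{d_{n+k-1}} \, \chi_{n+k-1}$. Reading off all components, we obtain
\[
W_n H W_n^* (\chi_n, \chi_{n+1}, \chi_{n+2}, \ldots) = (0, \sqrt{d_n} \chi_n, \sqrt{d_{n+1}} \chi_{n+1}, \ldots),
\]
which is exactly the definition of $S_{\Ui, n}$.

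The second equality is immediate by taking adjoints: since $W_n^* = W_n^{-1}$, we have $(W_n H W_n^*)^* = W_n H^* W_n^*$, so $W_n H^* W_n^* = S_{\Ui, n}^*$. There is no genuine obstacle here; the only care required is bookkeeping of the normalization constants $\sqrt{\prod_{q=n}^{n+j-1} d_q}$ when passing from $\xi_{n+j}$ to $\chi_{n+j}$, which is precisely the ratio $\sqrt{d_{n+k-1}}$ that produces the weights of $S_{\Ui, n}$.
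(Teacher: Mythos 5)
Your proposal is correct and follows essentially the same route as the paper: unwind $W_n^*$ via~\eqref{Seq}, apply $H$ componentwise using part~\ref{3DEPropHandH*} of Proposition~\ref{PropHandH*}, track the normalization constants to identify the new $\Ui_{n,n}$-coordinates as $\sqrt{d_{n+k-1}}\,\chi_{n+k-1}$, and obtain the second identity by taking adjoints. The bookkeeping of the factors $\sqrt{\prod_{q=n}^{n+j-1} d_q}$ is exactly as in the paper's computation.
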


\begin{proof}
Let $\big( \chi_{n+j} \big)_{j \ge 0} \in \ell^2(\N, \Ui_{n, n} )$.
The vector $W_n^* \big( ( \chi_{n+j} )_{j \ge 0} \big)$
is the vector $\xi$ of~\eqref{Seq}, so that
$$
\begin{aligned}
H W^* \big( ( \chi_{n+j} )_{j \ge 0} \big)
&= H \big( ( \xi_{n+j} )_{j \ge 0} \big)
= H \Bigg( \bigg(
\frac{ \sqrt{ d_{n+j} } } { \sqrt{ \prod_{q=n}^{n+j} d_q } } \hskip.1cm H^j \chi_{n+j}
\bigg)_{j \ge 0} \Bigg)
\\
&= (0, \eta_1, \eta_2, \hdots, \eta_k, \hdots)
\end{aligned}
$$
with
$$
\eta_k = \sqrt{ d_{n+k-1} } \frac{1}{ \sqrt{\prod_{q=n}^{n+k-1} d_q} } H^{k-1} \chi_{n+k-1}
= \sqrt{ d_{n+k-1} } \xi_{n+k-1}
\hskip.5cm \text{for all} \hskip.5cm
k \ge 1 .
$$
Therefore
$$
\begin{aligned}
W_n H W_n^* \big( ( \chi_{n+j} )_{j \ge 0} \big)
& = W_n (0, \eta_1, \eta_2, \hdots, \eta_k, \hdots)
\\
& = W_n 
(0, \sqrt{ d_n} \xi_n, \sqrt{ d_{n+1} } \xi_{n+1}, \sqrt{ d_{n+2} } \xi_{n+2}, \hdots ) ,
\\
& = S_{\Ui, n}
(\chi_n, \chi_{n+1}, \chi_{n+2}, \chi_{n+3}, \hdots) ,
\end{aligned}
$$
hence $W_n H W_n^* = S_{\Ui, n}$.
Finally $W_n H^* W_n^* = \left( W_n H W_n^* \right)^* = S_{\Ui, n}^*$.
\end{proof}

For $n \ge 0$, we denote by
$$
\delta_{*,n}
\hskip.5cm \text{the sequence} \hskip.5cm
(\sqrt{d_n}, \sqrt{d_{n+1}}, \hdots, \sqrt{d_{n+j}}, \hdots )
$$
and we consider the infinite Jacobi matrix
\begin{equation}
\label{eqJacdelta}
J_{\delta_{*,n}} = \begin{pmatrix}
0 & \sqrt{ d_n } & 0 & 0 & \cdots
\\
\sqrt{ d_n } & 0 & \sqrt{ d_{n+1} } & 0 & \cdots
\\
0 & \sqrt{ d_{n+1} } & 0 & \sqrt{ d_{n+2} } & \cdots
\\
0 & 0 & \sqrt{ d_{n+2} } & 0 & \cdots
\\
\cdots & \cdots & \cdots & \cdots & \ddots
\end{pmatrix} 
\end{equation}
If we identify the operators $S_n$ of~\eqref{eqdefSn}
and $S_n^*$ with their matrices
with respect to the standard basis $\left( \delta_j \right)_{j \in \N}$ of $\ell^2(\N)$,
we have
$$
J_{\delta_{*,n}} = S_n + S_n^* .
$$
Here is a reformulation of part of the previous propositions.

\begin{prop}
\label{PropAeqJ}
Let $T = (V, E)$ be an infinite spherically symmetric tree with root $v_0$
and with sequence of branching degrees $(d_r)_{r \ge 0}$
such that $d_r \ge 2$ for all $r \ge 0$ and $\sup_r d_r < \infty$.
\par
The adjacency operator $A_T$ of $T$ is unitarily equivalent to a direct sum
\par\noindent
$\bigoplus_{n = 0}^\infty m_n J_{\delta_{*,n}}$,
where the multiplicities $m_n$ are given by
$$
\begin{aligned}
m_n &= \dim \Ui_{n, n} = \Big( \prod_{q=0}^{n-2} d_q \Big) (d_{n-1} - 1)
\hskip.5cm \text{for} \hskip.2cm
n \ge 2
\\
m_1 &= \dim \Ui_{1, 1} = d_0 - 1
\\
m_0 &= \dim \Ui_{0, 0} = 1
\end{aligned}
$$
and where the $J_{\delta_{*,n}}$~'s
are the Jacobi matrices of~\eqref{eqJacdelta}.
\end{prop}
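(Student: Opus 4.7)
The plan is to assemble the pieces already proved in Propositions~\ref{PropHandH*}, \ref{PropDirSum}, \ref{PropHandH*suite}, \ref{PropMnIsEll2} and~\ref{PropHeqS}, treating the statement as essentially a bookkeeping reformulation of what has been shown.

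First I would invoke Proposition~\ref{PropDirSum} to get the orthogonal decomposition
$\ell^2(V) = \bigoplus_{n=0}^\infty \Vi_n$, with each $\Vi_n$ invariant by $H$, $H^*$, hence by $A_T = H+H^*$ (Proposition~\ref{PropHandH*}~\ref{2DEPropHandH*}). Thus $A_T$ is the Hilbert direct sum of its restrictions $A_T\vert_{\Vi_n}$, and it suffices to identify each restriction up to unitary equivalence.

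Next, for fixed $n \ge 0$, I would transport the restriction to the model space $\ell^2(\N,\Ui_{n,n})$ via the surjective isometry $W_n$ of Proposition~\ref{PropMnIsEll2}. By Proposition~\ref{PropHeqS} we have $W_n H W_n^* = S_{\Ui,n}$ and $W_n H^* W_n^* = S_{\Ui,n}^*$, so
\[
W_n\, A_T\vert_{\Vi_n}\, W_n^* \;=\; S_{\Ui,n} + S_{\Ui,n}^* .
\]
Now, by the very definition of $S_{\Ui,n}$ recalled before~\eqref{eqdefSn}, choosing any orthonormal basis $(e_1,\ldots,e_{m_n})$ of the finite-dimensional space $\Ui_{n,n}$ induces an isometric identification $\ell^2(\N,\Ui_{n,n}) \simeq \bigoplus_{k=1}^{m_n} \ell^2(\N)$ under which $S_{\Ui,n}$ becomes the direct sum of $m_n$ copies of the standard weighted shift $S_n$ on $\ell^2(\N)$. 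Consequently $S_{\Ui,n}+S_{\Ui,n}^*$ is unitarily equivalent to $m_n$ copies of $S_n+S_n^*$, which is exactly the Jacobi matrix $J_{\delta_{*,n}}$ displayed in~\eqref{eqJacdelta}.

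Finally, I would read off the multiplicities from Proposition~\ref{PropHandH*suite}\ref{6DEPropHandH*suite}, namely $m_n = \dim \Ui_{n,n}$ with the three cases $n\ge 2$, $n=1$, $n=0$ exactly as stated. Assembling the $n$ components back together via $\ell^2(V) = \bigoplus_n \Vi_n$ yields the claimed unitary equivalence
\[
A_T \;\simeq\; \bigoplus_{n=0}^\infty m_n\, J_{\delta_{*,n}}.
\]
There is no real obstacle left in this step; the only place requiring a small care is verifying that the identification of $S_{\Ui,n}$ with $m_n$ copies of $S_n$ is canonical (independent of the chosen basis of $\Ui_{n,n}$ up to unitary equivalence), which follows because the weights $\sqrt{d_{n+j}}$ are scalars acting diagonally on $\Ui_{n,n}$ and so commute with any unitary change of basis in that finite-dimensional fibre.
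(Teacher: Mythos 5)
Your proposal is correct and follows exactly the route the paper intends: the paper states Proposition~\ref{PropAeqJ} as a ``reformulation of part of the previous propositions,'' and your assembly of Propositions~\ref{PropDirSum}, \ref{PropMnIsEll2}, \ref{PropHeqS} and~\ref{PropHandH*suite}, together with the identities $S_{\Ui,n} = \bigoplus_{k=1}^{m_n} S_n$ and $J_{\delta_{*,n}} = S_n + S_n^*$, is precisely the bookkeeping the author leaves implicit.
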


As a first particular case, consider an integer $d \ge 2$,
the constant sequence $(d, d, d, \hdots)$,
and the \textbf{regular rooted tree $T_d^{{\rm root}} = (V, E)$ of branching degree~$d$};
the relevant Jacobi matrix
is the multiple $\sqrt{d} J$ of the free Jacobi matrix $J$ of Section~\ref{SectionRayLL}.
By Proposition~\ref{infray} for the marked spectrum of $J$
and by Proposition~\ref{rescale},
we obtain the marked spectrum of $\sqrt d J$:
\begin{enumerate}[label=(\arabic*)]
\item
The norm of $\sqrt d J$ is
$2 \sqrt d$.
\item
The spectrum of $\sqrt d J$ is $\mathopen[ -2 \sqrt d , 2 \sqrt d \mathclose]$.
\item
The vertex spectral measure of $\sqrt d J$ at $\delta_0$ is
$d \mu (x) = \frac{1}{2\pi d} \sqrt{ 4d - x^2} \hskip.1cm dx$
for $x \in \mathopen[ -2 \sqrt d , 2 \sqrt d \mathclose]$
(where $dx$ stands for the Lebesgue measure).
\item
The vector $\delta_0$ is cyclic for $\sqrt d J$
and the operator $\sqrt d J$ is multiplicity-free.
\end{enumerate}
By Proposition~\ref{PropAeqJ},
the adjacency operator of $T_d^{{\rm root}}$
is the direct sum of infinitely many copies of $\sqrt d J$,
and we obtain the following:

\begin{prop}
\label{propRootedTInfmult}
Let $d \ge 2$ and let $T_d^{{\rm root}} = (V, E)$ be
the regular rooted tree of branching degree~$d$.
Let $A_d^{{\rm root}}$ denote the adjacency operator of $T_d^{{\rm root}}$.
\begin{enumerate}[label=(\arabic*)]
\item\label{1DEpropRootedTInfmult}
The norm of $A_d^{{\rm root}}$ is
$2 \sqrt{d}$.
\item\label{2DEpropRootedTInfmult}
The spectrum of $A_d^{{\rm root}}$ is
$\mathopen[ -2 \sqrt{d}, 2 \sqrt{d} \mathclose]$.
\item\label{3DEpropRootedTInfmult}
The vertex spectral measure at $0$ is given by
$d \mu (x) = \frac{1}{2\pi d} \sqrt{ 4d - x^2} \hskip.1cm dx$
for $x$ in $\Sigma ( A_d^{{\rm root}} )$;
it is a scalar-valued spectral measure for $A_d^{{\rm root}}$.
\item\label{4DEpropRootedTInfmult}
$A_d^{{\rm root}}$ has uniform infinite multiplicity.
\end{enumerate}
\end{prop}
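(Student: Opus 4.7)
The plan is to derive all four claims as a direct consequence of Proposition~\ref{PropAeqJ} specialized to the constant sequence $(d,d,d,\ldots)$, combined with the marked spectrum of the free Jacobi matrix $J$ computed in Proposition~\ref{infray} and the rescaling rules of Proposition~\ref{rescale}. First, observe that when $d_r = d$ for all $r \ge 0$, the sequence $\delta_{*, n} = (\sqrt{d}, \sqrt{d}, \sqrt{d}, \ldots)$ is the same for every $n$, so every Jacobi matrix $J_{\delta_{*,n}}$ in~\eqref{eqJacdelta} coincides with the single operator $\sqrt{d}\, J$, where $J$ is the free Jacobi matrix $A_R$ of Section~\ref{SectionRayLL}. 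Thus Proposition~\ref{PropAeqJ} gives a unitary equivalence
\[
A_d^{{\rm root}} \;\cong\; \bigoplus_{n=0}^\infty m_n \bigl( \sqrt{d}\, J \bigr),
\]
with multiplicities $m_0 = 1$, $m_1 = d-1$, and $m_n = d^{n-2}(d-1)$ for $n \ge 2$.

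Next I would read off the marked spectrum of $\sqrt{d}\, J$. By Proposition~\ref{infray}, the free Jacobi matrix $J$ has norm $2$, spectrum $[-2,2]$, vertex spectral measure at $\delta_0$ with density $\frac{1}{2\pi}\sqrt{4-x^2}$, and is multiplicity-free with $\delta_0$ as cyclic vector. Applying Proposition~\ref{rescale} with $k = \sqrt{d}$ yields norm $2\sqrt{d}$, spectrum $[-2\sqrt{d}, 2\sqrt{d}]$, and local spectral measure at $\delta_0$ with density
\[
\rho^{\sqrt{d}\,J}_{\delta_0}(x) \;=\; \frac{1}{\sqrt{d}}\cdot\frac{1}{2\pi}\sqrt{4 - x^2/d} \;=\; \frac{1}{2\pi d}\sqrt{4d - x^2}
\]
on $[-2\sqrt{d}, 2\sqrt{d}]$. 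This handles the content of claims~\ref{1DEpropRootedTInfmult} and~\ref{2DEpropRootedTInfmult} (norm and spectrum are preserved under direct sums of unitarily equivalent copies).

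For claim~\ref{3DEpropRootedTInfmult}, I would trace the root vertex through the decomposition. By construction, $\delta_{v_0} \in \ell^2(S_0) = \Ui_{0,0} \subset \Vi_0$, so under the unitary $W_0$ of Proposition~\ref{PropMnIsEll2} it maps to the first standard basis vector of $\ell^2(\N, \Ui_{0,0})$, i.e.\ to the cyclic vector of $\sqrt{d}\, J$ sitting in the $n=0$ summand. Hence the vertex spectral measure at $v_0$ equals the local spectral measure of $\sqrt{d}\, J$ at its cyclic vector, which is precisely the density computed above. Since this measure is equivalent to Lebesgue measure on $[-2\sqrt{d}, 2\sqrt{d}]$, by Proposition~\ref{PropDominant}~\ref{6DEPropDominant} it is a scalar-valued spectral measure for the whole direct sum provided it dominates the local spectral measures of some generating family — and indeed since every direct summand is unitarily equivalent to $\sqrt{d}\, J$, the measure $\mu$ dominates (in fact equals) all the summand measures.

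For claim~\ref{4DEpropRootedTInfmult}, the fact that infinitely many unitarily equivalent copies of the multiplicity-free operator $\sqrt{d}\, J$ are summed — more precisely $\sum_{n\ge 0} m_n = 1 + (d-1)\sum_{n \ge 1} d^{n-1} = \infty$ since $d \ge 2$ — immediately produces uniform infinite multiplicity on $\Sigma(\sqrt{d}\, J) = [-2\sqrt{d}, 2\sqrt{d}]$, by the uniqueness part of Theorem~\ref{mainthHH}. The only step requiring any care is the identification of $\delta_{v_0}$ inside the decomposition; but this is transparent because $v_0$ is the unique vertex on the sphere $S_0$ and thus generates $\Ui_{0,0}$. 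No genuine obstacle arises: the proposition is essentially the immediate unpacking of Proposition~\ref{PropAeqJ} in the regular case.
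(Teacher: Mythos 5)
Your proposal is correct and follows essentially the same route as the paper: specialize Proposition~\ref{PropAeqJ} to the constant branching sequence to identify $A_d^{{\rm root}}$ with an infinite direct sum of copies of $\sqrt{d}\,J$, then read off the marked spectrum of $\sqrt{d}\,J$ from Proposition~\ref{infray} via the rescaling rules of Proposition~\ref{rescale}. Your extra care in tracing $\delta_{v_0}$ through $\Ui_{0,0}\subset\Vi_0$ and in justifying dominance is a welcome elaboration of what the paper leaves implicit, but it is not a different argument.
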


Recall from the introduction that
two graphs $G, G'$ of bounded degree are \textbf{cospectral}
if their adjacency operators have equal spectra,
equivalent scalar-valued spectral measures,
and spectral multiplicity functions which are equal almost everywhere.

\begin{cor}
\label{ExampleCospectrauxRT}
For any integer $d \ge 2$,
the lattice graph $L_d$
and the regular rooted tree $T_{d^2}^{{\rm root}}$
are cospectral.
\end{cor}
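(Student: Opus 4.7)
The plan is to compare the marked spectra of $L_d$ and $T_{d^2}^{\rm root}$ coordinate by coordinate, using the two preceding propositions, and then invoke the Hahn--Hellinger classification in the form of Corollary~\ref{CorRefUnitEq}. Since the marked spectrum is a complete invariant of unitary equivalence, and since ``cospectral'' was defined in the introduction precisely as sameness of marked spectra, the proof reduces to matching three pieces of data: spectrum, class of scalar-valued spectral measure, and spectral multiplicity function.

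First I would read off from Proposition~\ref{lattices} the marked spectrum of $A_d$: the spectrum is $\mathopen[-2d,2d\mathclose]$, a scalar-valued spectral measure is equivalent to Lebesgue measure on $\mathopen[-2d,2d\mathclose]$, and the multiplicity function is constantly $\infty$. Next I would apply Proposition~\ref{propRootedTInfmult} with $d$ replaced by $d^2$ to read off the marked spectrum of the adjacency operator $A_{d^2}^{{\rm root}}$ of $T_{d^2}^{{\rm root}}$: its spectrum is $\mathopen[-2\sqrt{d^2},2\sqrt{d^2}\mathclose]=\mathopen[-2d,2d\mathclose]$, a scalar-valued spectral measure has density $x\mapsto \tfrac{1}{2\pi d^2}\sqrt{4d^2-x^2}$ with respect to Lebesgue measure, and the multiplicity function is again constantly~$\infty$.

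The only point that requires a sentence of argument is the equivalence of measures: the density $\tfrac{1}{2\pi d^2}\sqrt{4d^2-x^2}$ is continuous on $\mathopen[-2d,2d\mathclose]$ and strictly positive on $\mathopen]-2d,2d\mathclose[$, so the associated measure and Lebesgue measure on $\mathopen[-2d,2d\mathclose]$ are mutually absolutely continuous. Hence the scalar-valued spectral measure of $A_{d^2}^{{\rm root}}$ lies in the same measure class as that of $A_d$.

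Having matched spectrum, measure class, and multiplicity function, I would conclude by Corollary~\ref{CorRefUnitEq} that $A_d$ and $A_{d^2}^{{\rm root}}$ are unitarily equivalent, which by definition means that $L_d$ and $T_{d^2}^{{\rm root}}$ are cospectral. There is no real obstacle here: the substantive work has already been carried out in Propositions~\ref{lattices} and~\ref{propRootedTInfmult}, and the corollary is a one-line consequence of those two computations combined with the Hahn--Hellinger theorem.
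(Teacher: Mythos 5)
Your proof is correct and follows exactly the route the paper takes: it cites Corollary~\ref{CorRefUnitEq} together with Propositions~\ref{lattices} and~\ref{propRootedTInfmult}, and adds the same remark you make, namely that the two scalar-valued spectral measures are not equal but are both equivalent to Lebesgue measure on the common spectrum, which suffices to match the measure classes. Nothing is missing.
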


\begin{proof}
This is an immediate consequence
of Corollary~\ref{CorRefUnitEq} and
of Propositions~\ref{lattices}
and~\ref{propRootedTInfmult}.
\par

Note that the measure $\mu_d$ of Proposition~\ref{lattices}
for $L_d$
and the measure $\mu$ of Proposition~\ref{propRootedTInfmult}
for $T_{d^2}^{{\rm root}}$
are not equal,
but they are both equivalent to the Lebesgue measure on $\mathopen[ -d^2, d^2 \mathclose]$,
and this is enough to apply Corollary~\ref{CorRefUnitEq}.
\end{proof}

\begin{exa}
\label{ExSuitedperio}
Consider an integer $p \ge 2$
and a sequence of integers $d_* = (d_r)_{r \ge 0}$ such that
$d_r \ge 2$ and $d_{p+r} = d_r$ for all $r \ge 0$.
For $s \in \{0, 1, \hdots, p-1\}$,
let $T_s$ be the spherically symmetric rooted tree
with sequence of branching degrees $d_{*, s} = (d_s, d_{s+1}, d_{s+2}, \hdots)$.
When $p$ is the smallest period of the sequence $d_*$,
the trees $T_0, \hdots, T_{p-1}$ are pairwise non-isomorphic.
\par

It follows from Proposition~\ref{PropAeqJ}
that the $p$ trees $T_0, \hdots, T_{p-1}$ are cospectral.
\end{exa}

\section{Regular trees}
\label{SectionRegTrees}

For any positive real number $a$, set
\begin{equation}
\label{eqJacAvecA}
J_a =
\begin{pmatrix}
0 & a & 0 & 0 & \cdots
\\
a & 0 & 1 & 0 & \cdots
\\
0 & 1 & 0 & 1 & \cdots
\\
0 & 0 & 1 & 0 & \cdots
\\
\vdots & \vdots & \vdots & \vdots & \ddots
\end{pmatrix} ,
\end{equation}
Note that $J_1$
is the free Jacobi matrix.
Matrices $J_{***}$ here and below are identified
with the corresponding operators on the Hilbert space $\ell^2(\N)$,
with its canonical orthonormal basis.
\par

Let $d$ be an integer, $d \ge 3$; let $T_d = (V, E)$
be the \textbf{regular tree of degree~$d$}.
Choose one vertex $v_0 \in V$ to be the root of $T_d$.
Then $T_d$ is the spherically symmetric rooted tree
with sequence of branching degrees $(d, d-1, d-1, d-1, \hdots)$
of which all terms are $d-1$ but the initial one which is $d$.
The matrix $J_{\delta_{*, 0}}$ of Proposition~\ref{PropAeqJ} is
\begin{equation}
\label{eqJacrootroot}
\begin{aligned}
J_{\sqrt d, \sqrt{d-1}^\infty} &= \begin{pmatrix}
0 & \sqrt d & 0 & 0 & 0 & \cdots
\\
\sqrt d & 0 & \sqrt{d-1} & 0 & 0 & \cdots
\\
0 & \sqrt{d-1} & 0 & \sqrt{d-1} & 0 & \cdots
\\
0 & 0 & \sqrt{d-1} & 0 & \sqrt{d-1} & \cdots
\\
0 & 0 & 0 & \sqrt{d-1} & 0 & \cdots
\\
\vdots & \vdots & \vdots & \vdots & \vdots & \ddots
\end{pmatrix} ,
\\ &=
J_{\sqrt d, \sqrt{d-1}^\infty} = \sqrt{d-1} J_a
\hskip.5cm \text{for} \hskip.5cm
a = \frac{ \sqrt{d} }{ \sqrt{d-1} } .
\end{aligned}
\end{equation}
Note that $1 \le a \le \sqrt{3/2}$, since $d \ge 3$.
The other matrices $J_{\delta_{* n}}$ of Proposition~\ref{PropAeqJ},
for $n \ge 1$, are all equal to $\sqrt{d-1} J_1$.
For Proposition~\ref{propTInfmult} below,
we will need to know properties
of the scalar-valued spectral measures defined by these matrices.
This is straightforward and very standard for $J_1$,
as already shown in Proposition~\ref{infray},
but we did not find a simple ad hoc argument for $J_{\sqrt d / \sqrt{d-1}}$,
and we rather quote the following

\begin{prop}
\label{PropQuoteJa}
Consider a real number $a$ such that $0 < a \le \sqrt{2}$
and the matrix $J_a$ of~\eqref{eqJacAvecA},
viewed as a self-adjoint operator acting on $\ell^2(\N)$,
with its canonical orthonormal basis $(\delta_n)_{n \ge 0}$.
\begin{enumerate}[label=(\arabic*)]
\item\label{1DEPropQuoteJa}
The norm of $J_a$ is $2$.
\item\label{2DEPropQuoteJa}
The spectrum of $J_a$ is the interval $\mathopen[ -2 , 2 \mathclose]$.
\item\label{3DEPropQuoteJa}
The vector $\delta_0$ is cyclic for the operator $J_a$.
\item\label{4DEPropQuoteJa}
The vertex spectral measure of $J_a$
is equivalent to the Lebesgue measure on $\mathopen[ -2 , 2 ]$,
and it is a scalar-valued spectral measure.
\end{enumerate}
\end{prop}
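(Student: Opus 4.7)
The plan is to treat $J_a$ as a finite-rank self-adjoint perturbation of the free Jacobi matrix $J_1$ of Proposition~\ref{infray}: the difference $J_a - J_1$ has only two nonzero matrix entries, at positions $(0,1)$ and $(1,0)$, both equal to $a-1$, hence is a compact operator of rank at most two. Weyl's theorem \cite[Theorem 3.14.1]{Sim4--15} then gives $\Sigma_{\mathrm{ess}}(J_a) = \Sigma_{\mathrm{ess}}(J_1) = \mathopen[ -2, 2 \mathclose]$, so $\Sigma(J_a)$ contains $\mathopen[ -2, 2 \mathclose]$ and any additional point is an isolated eigenvalue outside this interval. To exclude eigenvalues with $\vert \lambda \vert > 2$, I would analyse the three-term recurrence $\xi_{n-1} + \xi_{n+1} = \lambda \xi_n$ (valid for $n \ge 2$) on an $\ell^2$-eigenvector: its $\ell^2$-solutions have the form $\xi_n = C w^n$ for $n \ge 1$, with $w + w^{-1} = \lambda$ and $\vert w \vert < 1$. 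Substituting into the two top equations $a\xi_1 = \lambda \xi_0$ and $a\xi_0 + \xi_2 = \lambda \xi_1$, and using $\lambda - w = w^{-1}$, collapses them to the single algebraic constraint $(a^2-1) w^2 = 1$, which has no real root with $\vert w \vert < 1$ when $0 < a \le \sqrt 2$. Hence $\Sigma(J_a) = \mathopen[ -2, 2 \mathclose]$ and $\Vert J_a \Vert = 2$, giving claims~\ref{1DEPropQuoteJa} and~\ref{2DEPropQuoteJa}.

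Cyclicity of $\delta_0$ (claim~\ref{3DEPropQuoteJa}) is an immediate induction: $J_a \delta_0 = a \delta_1$ puts $\delta_1$ in the cyclic subspace generated by $\delta_0$; $J_a \delta_1 = a \delta_0 + \delta_2$ then puts $\delta_2$ there; and $J_a \delta_n = \delta_{n-1} + \delta_{n+1}$ for every $n \ge 2$ delivers $\delta_{n+1}$ from $\delta_n$ and $\delta_{n-1}$. By Proposition~\ref{PropMultiplicityFree}, $J_a$ is multiplicity-free, and by Proposition~\ref{PropDominant} the vertex spectral measure $\mu$ at $\delta_0$ is a scalar-valued spectral measure for $J_a$.

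For claim~\ref{4DEPropQuoteJa}, the plan is to compute the Borel transform $m(z) = \langle \delta_0 \sca (J_a - z)^{-1} \delta_0 \rangle$ explicitly, by solving $(J_a - z) \eta = \delta_0$ for $\eta \in \ell^2(\N)$ and $z \in \C_+$. The bulk equations again force $\eta_n = C w^n$ for $n \ge 1$, with $w + w^{-1} = z$ and $\vert w \vert < 1$; the two top equations then give $\eta_0 = 1/(a^2 w - z)$, so that
\[
m(z) \hskip.1cm = \hskip.1cm \frac{1}{(a^2 - 1) w - w^{-1}} .
\]
Taking boundary values as $z \to \lambda + i 0$ with $\lambda = 2 \cos \theta$, $\theta \in \mathopen ] 0, \pi \mathclose [$, so that $w \to e^{-i\theta}$, a short computation yields
\[
\mathrm{Im}\, m(\lambda + i0) \hskip.1cm = \hskip.1cm \frac{a^2 \sin \theta}{(a^2 - 2)^2 \cos^2 \theta + a^4 \sin^2 \theta} ,
\]
which is continuous and strictly positive on $\mathopen ] 0, \pi \mathclose [$ whenever $0 < a \le \sqrt 2$. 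A direct inspection of the recurrence at $\lambda = \pm 2$ shows that neither value is an eigenvalue (the only $\ell^2$-solution of $\xi_{n-1} + \xi_{n+1} = \pm 2 \xi_n$ is zero), so $\mu$ has no atoms there, and the continuity of $m$ up to the real axis on $\mathopen ] -2, 2 \mathclose [$ rules out a singular continuous part by standard Herglotz theory. Thus $\mu = \frac{1}{\pi} \mathrm{Im}\, m(\lambda + i 0) \, d\lambda$ on $\mathopen [ -2, 2 \mathclose ]$, and this measure is equivalent to the Lebesgue measure.

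The main obstacle will be the last step: one has to pick an unambiguous branch of $\sqrt{z^2 - 4}$ on $\C \smallsetminus \mathopen[ -2, 2 \mathclose]$ (the one making $\vert w(z) \vert < 1$ off the interval), carry out the boundary-value computation cleanly, and separately rule out a singular part of $\mu$ concentrated at $\pm 2$ --- which reduces to checking that $\pm 2$ are not eigenvalues of $J_a$.
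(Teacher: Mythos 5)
Your argument is correct, and for Claims~\ref{1DEPropQuoteJa}--\ref{3DEPropQuoteJa} it coincides with the paper's: Weyl's invariance of the essential spectrum under finite-rank perturbations gives $\Sigma_{{\rm ess}}(J_a) = \mathopen[ -2, 2 \mathclose]$, the second-order recurrence rules out $\ell^2$-eigenvectors precisely when $0 < a^2 \le 2$ (your constraint $(a^2-1)w^2 = 1$ with $\vert w \vert < 1$ is exactly the computation the paper delegates to \cite{BrHN}, and it correctly reproduces the eigenvalues $\pm a^2/\sqrt{a^2-1}$ for $a > \sqrt 2$ mentioned after the corollary), and cyclicity of $\delta_0$ is the same induction. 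Where you genuinely diverge is Claim~\ref{4DEPropQuoteJa}: the paper explicitly says it found no simple ad hoc argument and instead quotes the M\'at\'e--Nevai theorem (coefficients tending to $1$ with bounded variation imply a spectral measure with continuous positive density on $\mathopen] -2, 2 \mathclose[$), whereas you compute the Borel transform $m(z) = 1/((a^2-1)w - w^{-1})$ in closed form and read off $\operatorname{Im} m(\lambda + i0) = a^2 \sin\theta / ((a^2-2)^2\cos^2\theta + a^4\sin^2\theta) > 0$; this is essentially the explicit formula the paper attributes to \cite{Yafa--17} as an alternative. Your route is more self-contained and elementary (it exploits the fact that only the top coefficient of \eqref{eqJacAvecA} differs from $1$, so the resolvent equation is exactly solvable, and it checks correctly against the $a=1$ case of Proposition~\ref{infray}); the paper's route buys generality, since M\'at\'e--Nevai applies to Jacobi matrices with arbitrarily many perturbed coefficients. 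Two small points to make explicit when writing this up: the denominator $(a^2-1)w - w^{-1}$ has no zero in the open unit disc when $a \le \sqrt 2$ (the same computation as for eigenvalues), so $m$ is genuinely analytic off $\mathopen[ -2, 2 \mathclose]$; and when $a = \sqrt 2$ the boundary value of $m$ blows up like $\vert z \mp 2 \vert^{-1/2}$ at the endpoints, which is harmless for equivalence with Lebesgue measure (the density stays locally integrable and positive on the open interval, and your separate check that $\pm 2$ are not eigenvalues disposes of possible atoms there).
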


\begin{proof}[Proof for~\ref{1DEPropQuoteJa} to~\ref{3DEPropQuoteJa}
and reference for~\ref{4DEPropQuoteJa}]
As in the proof of Proposition~\ref{PropSpecD},
we have
$\Sigma_{{\rm ess}} (X+K) = \Sigma_{{\rm ess}} (X)$,
so that $\Sigma_{{\rm ess}} (J_a) = \mathopen[ -2, 2 \mathclose]$;
this holds for all $a \ge 0$.
The eigenvalue equation $J_a \xi = \lambda \xi$
for $\xi = (\xi_n)_{n \ge 0} \in \ell^2(\N)$
gives rise to a difference equation of second order with constant coefficients,
and a routine computation shows that this equation has no solution in $\ell^2(\N)$
when $0 < a^2 \le 2$ (details for example in \cite[Lemma 4.6]{BrHN});
it follows that $\Sigma (J_a) = \Sigma_{{\rm ess}} (J_a) = \mathopen[ -2, 2 \mathclose]$.
This completes the proof of Claims~\ref{1DEPropQuoteJa} and~\ref{2DEPropQuoteJa}.
It is straightforward to check Claim~\ref{3DEPropQuoteJa}.
\par

Claim~\ref{4DEPropQuoteJa} is more delicate to prove,
and we quote here a particular case of the result of \cite{MaNe--83}
(particular because we impose diagonal coefficient $b_n = 0$ here,
and because we exclude eigenvalues):

\vskip.2cm

\emph{
Let $(a_n)_{n \ge 0}$ be a sequence of positive real numbers
such that $\lim_{n \to \infty} a_n = 1$ and $\sum_{n=1}^\infty \vert a_{n+1} - a_n \vert < \infty$.
Let $\mu$ be the measure associated to the sequence of orthonormal polynomials $(P_n)_{n \ge 0}$
defined by the recurrence formula
$$
xP_n(x) = a_n P_{n+1}(x) + a_{n-1} P_{n-1}(x)
\hskip.5cm \text{for} \hskip.5cm
n \ge 0
$$
(with ${a_{-1} = 0}$, $P_{-1} = 0$, $P_0$ constant)
and the normalisation $P_n (x) = \gamma_n x^n + \text{lower order terms}$, $\gamma_n > 0$.
Consider the operator $J$ defined on the Hilbert space $\ell^2(\N)$
with its canonical basis $(\delta_n)_{n \in \N}$ by the Jacobi matrix
\begin{equation}
\label{eqJacobiDeDoNe}
\begin{pmatrix}
0 & a_0 & 0 & 0 & \cdots
\\
a_0 & 0 & a_1 & 0 & \cdots
\\
0 & a_1 & 0 & a_2 & \cdots
\\
0 & 0 & a_2 & 0 & \cdots
\\
\vdots & \vdots & \vdots & \vdots & \ddots
\end{pmatrix} ,
\end{equation}
and assume that this operator does not have any eigenvalue.
Let $\mu$ be the local spectral measure of $J$ at $\delta_0$,
defined by
$\int_{\Sigma (J)} f(x) d\mu(x) = \langle f(J) \delta_0 \sca \delta_0 \rangle$
for any function $f$ continuous on the spectrum $\Sigma (J)$ of $J$.
}
\par
\emph{
Then $\Sigma (J) = \mathopen[ -2, 2 \mathclose]$
and $\mu = \rho \lambda$ for a function $\rho$
which is continuous positive on $\mathopen] -2, 2 \mathclose[$
and zero outside $\mathopen[ -2, 2 \mathclose]$
(where $\lambda$ is the Lebesgue measure).
In particular, $\mu$ is equivalent to~$\lambda$ on $\mathopen[ -2, 2 \mathclose]$.
}

\vskip.2cm

Claim~\ref{4DEPropQuoteJa} follows.
Rather than relying on \cite{MaNe--83},
we could alternatively quote \cite[Theorem III.11]{Yafa--17},
which provides an explicit formula for the local spectral measure of $J_a$
at the vector $\delta_0$,
or quote results related to that of \cite{MaNe--83},
such as \cite[Theorem 3]{DoNe--86} or \cite[Theorem 8.18]{Yafa--22}.
\end{proof}

By Corollary~\ref{CorRefUnitEq}, we have the following consequence
of Proposition~\ref{PropQuoteJa}, surprising for us:

\begin{cor}
For any $a \in \mathopen]0, \sqrt 2 \mathclose]$,
the matrix $J_a$ is unitarily equivalent to $J_1$.
\end{cor}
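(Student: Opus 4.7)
The plan is to deduce the corollary purely formally from Proposition~\ref{PropQuoteJa} and Corollary~\ref{CorRefUnitEq}, by checking that $J_a$ and $J_1$ have the same marked spectrum in the sense of Section~\ref{SectionIntro}. Recall that the marked spectrum consists of three invariants: the spectrum, the equivalence class of a scalar-valued spectral measure, and the spectral multiplicity function (up to equality almost everywhere). I will verify each in turn.

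First I would observe that $\Sigma(J_a) = \mathopen[-2,2\mathclose] = \Sigma(J_1)$. Indeed, Proposition~\ref{PropQuoteJa}\ref{2DEPropQuoteJa} gives this for any $a \in \mathopen]0, \sqrt 2 \mathclose]$, and the case $a = 1$ (the free Jacobi matrix) is already recorded in Proposition~\ref{infray}\ref{2DEinfray}. Next, the scalar-valued spectral measures coincide up to equivalence: by Proposition~\ref{PropQuoteJa}\ref{4DEPropQuoteJa}, the vertex spectral measure of $J_a$ at $\delta_0$ is equivalent to Lebesgue measure on $\mathopen[-2,2\mathclose]$ and is a scalar-valued spectral measure for $J_a$; the same holds for $J_1$ by Proposition~\ref{infray}\ref{3DEinfray}. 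Hence both operators admit scalar-valued spectral measures in the same equivalence class, namely that of Lebesgue measure on $\mathopen[-2,2\mathclose]$.

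It remains to compare spectral multiplicity functions. By Proposition~\ref{PropQuoteJa}\ref{3DEPropQuoteJa}, the vector $\delta_0 \in \ell^2(\mathbf N)$ is cyclic for $J_a$; similarly $\delta_0$ is cyclic for $J_1$ by Proposition~\ref{infray}\ref{4DEinfray}. By Proposition~\ref{PropMultiplicityFree}, both operators are therefore multiplicity-free, so $\mathfrak m_{J_a}(x) = 1 = \mathfrak m_{J_1}(x)$ for almost every $x \in \mathopen[-2,2\mathclose]$. Applying Corollary~\ref{CorRefUnitEq} to this triple of coincidences concludes that $J_a$ and $J_1$ are unitarily equivalent.

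There is essentially no obstacle remaining at this stage: all the hard analytic work, in particular the absolute continuity statement from M\'at\'e--Nevai-type results, has been encapsulated in Proposition~\ref{PropQuoteJa}\ref{4DEPropQuoteJa}. The only subtle point one might wish to emphasise is that even though the two measures are not \emph{equal}, the Hahn--Hellinger theorem only requires equivalence, which is why a continuous weight (bounded above and bounded away from $0$ on compact subsets of $\mathopen]-2,2\mathclose[$) is enough to conclude.
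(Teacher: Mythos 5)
Your proposal is correct and follows exactly the route the paper intends: the corollary is stated there as an immediate consequence of Proposition~\ref{PropQuoteJa} together with Corollary~\ref{CorRefUnitEq}, i.e.\ one checks that for every $a \in \mathopen]0, \sqrt 2\mathclose]$ the operator $J_a$ has spectrum $\mathopen[-2,2\mathclose]$, a scalar-valued spectral measure equivalent to Lebesgue measure, and (being cyclic, hence multiplicity-free) constant multiplicity one, so all these operators share the same marked spectrum. Your write-up simply makes explicit the three verifications the paper leaves implicit, including the passage from the existence of a cyclic vector to multiplicity-freeness via Proposition~\ref{PropMultiplicityFree}.
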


In contrast, for $a > \sqrt 2$, the operator $J_a$ has two simple eigenvalues
$\pm \frac{ a^2 }{ \sqrt{ a^2 - 1 } }$,
and therefore is not unitarily equivalent to $J_1$.
Let $d \ge 3$ and $a = \sqrt{d} / \sqrt{d-1}$;
note that $a < \sqrt 2$;
since $J_{\sqrt d, \sqrt{d-1}^\infty} = \sqrt{d-1} J_a$,
see~\eqref{eqJacrootroot},
Proposition~\ref{PropQuoteJa} implies:
(1)
The norm of $J_{\sqrt d, \sqrt{d-1}^\infty}$ is $2 \sqrt {d-1}$.
(2)
The spectrum of $J_{\sqrt d, \sqrt{d-1}^\infty}$
is the interval $\mathopen[ -2 \sqrt {d-1} , 2 \sqrt {d-1} \mathclose]$.
(3)
The vector $\delta_0$ is cyclic for the operator $J_{\sqrt d, \sqrt{d-1}^\infty}$.
(4)
The vertex spectral measure of $J_{\sqrt d, \sqrt{d-1}^\infty}$
is equivalent to the Lebesgue measure
on $\mathopen[ -2 \sqrt {d-1} , 2 \sqrt {d-1} ]$;
it is a scalar-valued spectral measure.
\par

By Proposition~\ref{PropAeqJ},
the adjacency operator $A_d$ of $T_d$
is the direct sum of
one copy of $J_{\sqrt d, \sqrt{d-1}^\infty}$
and infinitely many copies of $\sqrt{d-1} J_1$,
hence we obtain the following:

\begin{prop}
\label{propTInfmult}
Let $d \ge 3$ and let $T_d = (V, E)$ be
the regular tree of degree~$d$.
Let $A_{T_d}$ be the adjacency operator $T_d$.
\begin{enumerate}[label=(\arabic*)]
\item\label{1DEpropTInfmult}
The norm of $A_{T_d}$ is $2 \sqrt{d-1}$.
\item\label{2DEpropTInfmult}
The spectrum of $A_{T_d}$ is
$\mathopen[ -2 \sqrt{d-1}, 2 \sqrt{d-1} \mathclose]$.
\item\label{3DEpropTInfmult}
The vertex spectral measure at any vertex
is equivalent to the Lebesgue measure on
the spectrum of $A_{T_d}$;
 it is a scalar-valued spectral measure.
\item\label{4DEpropTInfmult}
$A_{T_d}$ has uniform infinite multiplicity.
\end{enumerate}
\end{prop}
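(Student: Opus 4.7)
My plan is to assemble Propositions~\ref{PropAeqJ}, \ref{PropQuoteJa}, and \ref{rescale}, reducing everything to known Jacobi-matrix data. Taking any vertex $v_0 \in V$ as the root, $T_d$ becomes a spherically symmetric rooted tree with branching sequence $(d, d-1, d-1, \ldots)$: the root has $d$ neighbors, and each other vertex has exactly $d-1$ children. Proposition~\ref{PropAeqJ} then exhibits $A_{T_d}$ as unitarily equivalent to the direct sum $J_{\sqrt d, \sqrt{d-1}^\infty} \oplus \bigoplus_{n \ge 1} m_n (\sqrt{d-1}\, J_1)$, where $J_{\sqrt d, \sqrt{d-1}^\infty} = \sqrt{d-1}\, J_a$ with $a = \sqrt{d/(d-1)} \in \mathopen]1, \sqrt{3/2}\mathclose]$, and every multiplicity $m_n$ is given explicitly in Proposition~\ref{PropAeqJ} and is at least $1$ (using $d \ge 3$).

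Since $a \le \sqrt{3/2} < \sqrt 2$, Proposition~\ref{PropQuoteJa} gives $\Vert J_a \Vert = \Vert J_1 \Vert = 2$, $\Sigma(J_a) = \Sigma(J_1) = \mathopen[-2,2\mathclose]$, and local spectral measure at $\delta_0$ equivalent to Lebesgue on $\mathopen[-2,2\mathclose]$; Proposition~\ref{rescale} transfers each of these statements to the rescaled summands $\sqrt{d-1}\, J_a$ and $\sqrt{d-1}\, J_1$ on $\mathopen[-2\sqrt{d-1}, 2\sqrt{d-1}\mathclose]$. Claims~\ref{1DEpropTInfmult} and~\ref{2DEpropTInfmult} follow at once from the direct-sum structure. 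Claim~\ref{4DEpropTInfmult} follows by Theorem~\ref{mainthHH} (equivalently Corollary~\ref{CorRefUnitEq}): every summand is multiplicity-free with the same scalar-valued spectral measure class on the same compact spectrum, and there are countably many nonzero summands, so the direct sum has uniform infinite multiplicity.

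For Claim~\ref{3DEpropTInfmult}, I would first observe that $T_d$ is vertex-transitive: any two vertices are exchanged by a graph automorphism which induces a unitary on $\ell^2(V)$ commuting with $A_{T_d}$, so the local spectral measure $\mu_{\delta_v}$ is independent of $v$ and it suffices to treat $v = v_0$. Under the identification of Proposition~\ref{PropAeqJ}, the vector $\delta_{v_0}$ spans $\Ui_{0,0}$ and corresponds via $W_0$ to the cyclic vector $\delta_0$ of $\sqrt{d-1}\, J_a$, so by the previous paragraph $\mu_{\delta_{v_0}}$ is equivalent to Lebesgue on $\mathopen[-2\sqrt{d-1}, 2\sqrt{d-1}\mathclose]$. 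To conclude that this measure is a scalar-valued spectral measure for all of $A_{T_d}$, I would invoke Proposition~\ref{PropDominant}~\ref{6DEPropDominant}: it is enough to check that $\mu_{\delta_{v_0}}$ dominates the local spectral measure at each element of an orthonormal basis of each $\Vi_n$ for $n \ge 1$; since every such measure is absolutely continuous with respect to Lebesgue on the common spectrum, domination is automatic.

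The only delicate step is this last equivalence-propagation argument, but once one records that every summand of $A_{T_d}$ already has scalar-valued spectral measure in the same Lebesgue class on the common interval, it becomes a formal consequence of Hahn--Hellinger rather than a genuine obstacle.
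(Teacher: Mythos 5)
Your proposal is correct and follows essentially the same route as the paper: decompose $A_{T_d}$ via Proposition~\ref{PropAeqJ} into one copy of $J_{\sqrt d, \sqrt{d-1}^\infty} = \sqrt{d-1}\,J_a$ with $a = \sqrt{d}/\sqrt{d-1} \le \sqrt{3/2} < \sqrt 2$ plus infinitely many copies of $\sqrt{d-1}\,J_1$, then apply Proposition~\ref{PropQuoteJa} and the rescaling of Proposition~\ref{rescale}. Your additional care over Claim~\ref{3DEpropTInfmult} (vertex-transitivity plus the dominance criterion of Proposition~\ref{PropDominant}~\ref{6DEPropDominant}) merely makes explicit what the paper leaves implicit.
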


\begin{cor}
\label{ExampleCospectrauxT}
For any integer $d \ge 2$,
the lattice graph $L_d$
and the regular tree $T_{d^2 + 1}$
are cospectral.
\end{cor}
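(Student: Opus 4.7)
The plan is to check that the marked spectra of $L_d$ and $T_{d^2+1}$ coincide, and then invoke Corollary~\ref{CorRefUnitEq}. So the proof will be essentially a bookkeeping exercise once the previous propositions are in hand.

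First I would apply Proposition~\ref{lattices} to $L_d$: the spectrum of $A_d$ is $\mathopen[-2d,2d\mathclose]$, the vertex spectral measure at any vertex is a scalar-valued spectral measure equivalent to the Lebesgue measure on $\mathopen[-2d,2d\mathclose]$, and $A_d$ has infinite uniform multiplicity. Next I would apply Proposition~\ref{propTInfmult} to $T_{d^2+1}$: setting $d' = d^2+1$ so that $\sqrt{d'-1} = d$, the spectrum of $A_{T_{d^2+1}}$ is $\mathopen[-2\sqrt{d'-1}, 2\sqrt{d'-1}\mathclose] = \mathopen[-2d,2d\mathclose]$, the vertex spectral measure is a scalar-valued spectral measure equivalent to the Lebesgue measure on the same interval, and $A_{T_{d^2+1}}$ also has infinite uniform multiplicity.

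Having matched the three invariants, I would conclude by Corollary~\ref{CorRefUnitEq} (or equivalently by Theorem~\ref{mainthHH}) that $A_d$ and $A_{T_{d^2+1}}$ are unitarily equivalent, which is precisely the definition of cospectrality used in the introduction. I would add a short remark, parallel to the one in Corollary~\ref{ExampleCospectrauxRT}, that the two scalar-valued spectral measures themselves are not equal (one is a $d$-fold convolution of an arcsine-type density, the other the density $\frac{1}{2\pi d}\sqrt{4d - x^2}$ rescaled from Proposition~\ref{propRootedTInfmult}), but that only their equivalence classes enter the marked spectrum, and both classes coincide with that of Lebesgue measure on $\mathopen[-2d,2d\mathclose]$.

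There is no real obstacle here: the entire content is packaged in the previous propositions, and the only step that requires any thought is verifying the numerology $\sqrt{(d^2+1)-1} = d$ so that the two spectra coincide as intervals rather than merely as compact sets. Once this is observed, the proof is a single sentence referring back to Corollary~\ref{CorRefUnitEq}.
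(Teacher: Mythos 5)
Your proof is correct and is exactly the route the paper takes: the paper leaves this corollary without an explicit proof, the intended argument being the same one-line deduction from Corollary~\ref{CorRefUnitEq} together with Propositions~\ref{lattices} and~\ref{propTInfmult} (including the numerology $\sqrt{(d^2+1)-1}=d$) as was spelled out for Corollary~\ref{ExampleCospectrauxRT}. One small quibble with your optional closing remark: the vertex spectral measure of $T_{d^2+1}$ is not the semicircle density of Proposition~\ref{propRootedTInfmult} (that belongs to $T_{d^2}^{{\rm root}}$) but rather the measure attached to the Jacobi matrix $J_{\sqrt{d'},\sqrt{d'-1}^\infty}$ in Proposition~\ref{propTInfmult}; this does not affect your argument, which only uses equivalence to Lebesgue measure on $\mathopen[-2d,2d\mathclose]$.
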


Remark: the vertex spectral measures of $T_d$ and $T_d^{{\rm root}}$
which appear here are equivalent to the Lebesgue measure
on the appropriate interval.
This is in sharp contrast with large families of spherically symmetric rooted trees,
for which vertex spectral measures don't have absolutely continuous spectrum
\cite{BrFr--09}, \cite{DaSu--19}.

\section{An uncountable family of cospectral
graphs from \cite{GrNP--22}}
\label{SectionSurGrNP}

There are in \cite{GrNP--22} examples of uncountable families
of pairwise non-isomorphic cospectral Schreier graphs.
They are defined in terms of certain groups of automorphisms
of infinite regular rooted trees called spinal groups,
and the actions of these groups on the boundaries of the trees.
We restrict here to the particular case of the
Fabrykowski--Gupta group,
which is the simplest of the spinal groups
acting on rooted trees of branching degree $\ge 3$,
and we describe shortly one of these families as follows.
\par

Consider the regular rooted tree $T = T_3^{{\rm root}}$ of branching degree $3$,
its boundary $\partial T$
which is the Cantor space $\{ 0, 1, 2 \}^\N$ of infinite sequences of $0, 1$ and $2$~'s,
and the Bernoulli measure $\nu$ on $\partial T$
which is a probability measure invariant by the automorphism group of $T$.
The Fabrykowski--Gupta group $\Gamma$
is the group of automorphisms of $T$ generated by
the symmetric set $S = \{ a, a^{-1}, b, b^{-1} \}$,
where $a$ is the cyclic permutation
of the three main branches of $T$ just below the root,
and where $b$ is the automorphism of $T$
usually defined recursively by $b = (a, 1, b)$,
see for example \cite[Subsection 8.2]{NaPe--21}.
\par

For $\xi \in \partial T$,
let ${\rm Stab}_\xi (\Gamma)$ denote
the stabilizer $\{ g \in \Gamma : g \xi = \xi \}$.
Let ${\rm Sc}_\xi = {\rm Sc} (\Gamma, {\rm Stab}_\xi (\Gamma), S)$
be the \textbf{Schreier graph} of the indicated triple,
with vertex set the orbit $\Gamma \xi$
(i.e., the coset space $\Gamma / {\rm Stab}_\xi (\Gamma)$)
and edges the pairs of the form $\{ g\xi, sg\xi \}$
with $g \in \Gamma$ and $s \in S$.
This graph may have loops (pairs with $g \xi = sg\xi)$)
and multiple edges (pairs $\{ g\xi, sg\xi \}$ and $\{ g\xi, s'g\xi \}$
with $s' \ne s$ and $sg\xi = s'g\xi$),
but its adjacency operator $A_\xi$ acting on $\ell^2(G \xi)$
can be naturally defined.
\par

It is known that there exists
a measurable subset $\mathcal W$ of $\partial T$ of full measure,
i.e., $\nu(\mathcal W) = 1$, such that for $\xi \in \mathcal W$
the adjacency operator $A_\xi$
has the following properties:
\begin{enumerate}
\item[$-$]
The closure of the set of eigenvalues of $A_\xi$,
which is the spectrum of $A_\xi$,
is the union of a Cantor subset of $\R$ of Lebesgue measure zero
and of countably many points accumulating on this Cantor set;
see \cite[Theorem 3.6 \& Corollary 4.13]{BaGr--00}
and \cite[Theorem 1.5]{GrNP--22}.
%
\item[$-$]
$A_\xi$ has a pure point spectrum,
more precisely there exists an orthonormal basis of $\ell^2(\Gamma \xi)$
of eigenvectors of $A_\xi$,
moreover each eigenvector in this basis
is a function of finite support on $\Gamma \xi$
\cite[Theorem 1.8]{GrNP--22}.
\item[$-$]
The set of these eigenvalues 
and their multiplicities, which are all infinite,
do not depend on $\xi$
\cite[Section 5]{GrNP--22}.
\end{enumerate}
Moreover, for $\xi \in \mathcal W$,
the set of $\xi' \in \mathcal W$ for which ${\rm Sc}_{\xi'}$
is isomorphic to ${\rm Sc}_\xi$ has $\nu$-measure $0$
\cite[Corollary 7.13]{NaPe--21}.
\par

In particular, there are uncountably many graphs ${\rm Sc}_\xi$
which are cospectral and pairwise non-isomorphic.

\end{document}